\newtheorem{fact}[theorem]{Fact}
\newtheorem{MKTremark}[theorem]{Remark}
\newtheorem{MKTexample}[theorem]{Example}
\newcommand{\RR}{\mathbb{R}}
\newcommand{\NN}{\mathbb{N}}
\newcommand{\di}{{\rm dist}}
\DeclareMathOperator*{\argmin}{arg\,min}
\title{Convergence rate analysis for averaged fixed point iterations in the presence of H\"older regularity}
\author{Jonathan M. Borwein\thanks{CARMA, U. Newcastle,
                                  Callaghan NSW 2308, Australia
                                  (\email{jonathan.borwein@newcastle.edu.au}).}
        \and
        Guoyin Li\thanks{Dept. Applied Math., U. New South Wales,
                         Sydney NSW 2052, Australia
                         (\email{g.li@unsw.edu.au}).}
          \and
          Matthew K. Tam\thanks{Institut f\"ur Numerische und Angewandte Mathematik,
							   U. G\"ottingen, 37083 G\"ottingen, Germany
                               (\email{m.tam@math.uni-goettingen.de}).}
          }
\date{\today}
\begin{document}
\maketitle
\slugger{siopt}{xxxx}{xx}{x}{x--x}

\begin{abstract}
 In this paper, we establish sublinear and linear convergence of fixed point iterations generated by averaged operators in a Hilbert space. Our results are achieved under a bounded H\"older regularity assumption which generalizes the well-known notion of bounded linear regularity. As an application of our results, we provide a convergence rate analysis for
 many important iterative methods in solving
 broad mathematical problems such as  convex feasibility problems and variational inequality problems. These include Krasnoselskii--Mann iterations, the cyclic projection algorithm, forward-backward splitting and the Douglas--Rachford feasibility algorithm along with some variants. In the important case in which the underlying sets are convex sets described by convex polynomials in a finite dimensional space, we show that the H\"older regularity properties are automatically satisfied, from which sublinear convergence follows.
\end{abstract}

\begin{keywords}
 averaged operator, fixed point iteration, convergence rate, H\"older regularity, semi-algebraic, Douglas--Rachford algorithm
\end{keywords}

\begin{AMS}
 Primary 41A25, 90C25; Secondary 41A50, 90C31
\end{AMS}

\pagestyle{myheadings}
\thispagestyle{plain}
\markboth{\MakeUppercase{J.M. Borwein, G. Li, and M.K. Tam}}
         {\MakeUppercase{Convergence rate analysis of fixed point iterations}}

\section{Introduction}
 Consider the problem of finding a point in the intersection of a finite family of closed convex subsets of a Hilbert space; a problem often referred to as the \emph{convex feasibility problem} which arises frequently throughout areas of mathematics,  science and engineering. For details, we refer the reader to the surveys \cite{BB,censor2014}, the monographs \cite{BC2011,escalante}, any of \cite{combettes,Jon_AZNIAM,entropy}, and the references therein.

 One approach to solving convex feasibility problems involves designing a nonexpansive operator whose fixed point set can be used to easily produce a point in the target intersection (in the simplest case, the fixed point set coincides with the target intersection). The operator's fixed point iteration can then be used as the basis of an iterative algorithm which, in the limit, yields the desired solution. An important class of such methods comprises the so-called \emph{projection and reflection methods} which employ various combinations of \emph{projection} and \emph{reflection} operations with respect to underlying constraint sets.

 Notable methods of this kind include the \emph{alternating projection algorithm}\cite{Heinz_Set,GPR,Bor_Li_Yao}, the \emph{Douglas--Rachford (DR) algorithm} \cite{PDE,Lions_Mercier,Eckstein_Bertsekas}, and  many extensions and variants \cite{cycDRinfeas,BorTam,BNP,reich2015modular}. Even in settings without convexity \cite{Jon_AZNIAM,Jon_JOTA,Attouch1,Bor_Sims,Lewis,MOR}, such methods remain a popular choice due largely to their simplicity, ease-of-implementation and relatively -- often surprisingly -- good performance.

 The origins of the Douglas--Rachford (DR) algorithm can be traced to \cite{PDE} where it was used to solve problems arising in nonlinear heat flow. In its full generality, the method finds zeros of the sum of two maximal monotone operators. Weak convergence of the scheme was originally proven by Lions and Mercier \cite{Lions_Mercier}, and the result was recently strengthened by Svaiter \cite{svaiter}. Specialized to feasibility problems, Svaiter's result implies that the iterates generated by the DR algorithm are always weakly convergent, and that the \emph{shadow sequence} converges weakly to a point in the intersection of the two closed convex sets. The scheme has also been examined in \cite{Eckstein_Bertsekas} where its relationship with another popular method, the \emph{proximal point algorithm}, was discussed.

 Motivated by the computational observation that the Douglas--Rachford algorithm  sometimes outperforms other projection methods, in the convex case many researchers have studied the actual convergence rate of the algorithm. By \emph{convergence rate}, we mean how \emph{fast} the sequences generated by the algorithm converges to their limit points. For the Douglas--Rachford algorithm, the first such result, which appeared in \cite{Luke2} and was later extended by \cite{BBNP}, showed the algorithm  to converge linearly whenever the two constraint sets are closed subspaces with a closed sum, and, further, that the rate is governed exactly by the cosine of the \emph{Friedrichs angle} between the subspaces. In finite dimensions, if the sum of the two subspaces is not closed, convergence of the method -- while still assured -- need not be linear \cite[Sec.~6]{BBNP}. See also \cite{Giselsson} for other recent work regarding linear convergence. For most projection methods, it is typical that there exists instances in which the rate of convergence is arbitrarily slow and not even sublinear or arithmetic \cite{davis2015,HDH}. Most recently, a preprint of Davis and Yin shows that indeed the Douglas--Rachford method also may converge arbitrarily slowly in infinite dimensions \cite[Th.~9]{davis_yin}.

 In potentially nonconvex settings, a number of recent works \cite{Luke1,Luke2,Phan,Li_Pong} have established local linear convergence rates for the DR algorithm using commonly used constraint qualifications. When specialized to the convex case, these results state that the DR algorithm exhibits locally linear convergence for convex feasibility problems in a finite dimensional space whenever the relative interiors of the two convex sets have a non-empty intersection. On the other hand, when such a regularity condition is not satisfied, the DR algorithm can fail to exhibit linear convergence, even in simple two dimensional cases as observed by \cite[Ex.~5.4(iii)]{Heinz_0} (see Section~\ref{sec:examples} for further examples and discussion). This situation therefore calls for further research aimed at answering the question:
{\em Can a global convergence rate for the DR algorithm and its variants be established or estimated for some reasonable class of convex sets without the above mentioned regularity condition?}

 { The goal of this paper is to provide some partial answers to the above question, as well as  giving simple tools for establishing sublinear or linear convergence of the Douglas--Rachford algorithm and  variants. Our analysis is performed within the much more general setting of \emph{fixed point iterations}
  described by \emph{averaged nonexpansive operators}. This broad framework covers many iterative fixed-point methods including various Krasnoselskii--Mann iterations, the cyclic projection algorithm,  Douglas--Rachford algorithms and forward-backward splitting methods,
   and can be used to solve not only convex feasibility problems but also convex optimization problems and variational inequality problems. We pay special attention to the case in which the underlying sets are \emph{convex semi-algebraic sets} in a finite dimensional space. Such sets comprise a broad sub-class of convex sets that we shall show satisfy \emph{H\"older regularity properties} without requiring any further assumptions. Indeed, they capture all polyhedra and convex sets described by convex quadratic functions. Furthermore, convex semi-algebraic structure can often be relatively easily identified.}

\subsection{Content and structure of the paper}
 The detailed contributions of this paper are summarized as follows:
\begin{enumerate}[(I)]
 \item We study an abstract algorithm which we refer to as the \emph{quasi-cyclic algorithm}. This algorithm covers many iterative fixed-point methods including various Krasnoselskii--Mann iterations, the cyclic projection algorithm,  Douglas--Rachford algorithms and forward-backward splitting methods.
 In the presence of so-called \emph{bounded H\"older regularity properties}, sublinear  convergence of the algorithm is then established (Theorem~\ref{thm:abstract convergence rate}).

 \item The quasi-cyclic algorithm framework is then specialized to the Douglas--Rachford algorithm and its variants (Section~\ref{sec:DR}). We show the results apply, for instance, to the important case of feasibility problems for which the underlying sets are convex semi-algebraic in a finite dimensional space.

 \item A damped variant of the Douglas--Rachford algorithm is examined. Again, in the case in which the underlying sets are convex basic semi-algebraic sets in a finite dimensional space, we obtain a more {\em explicit estimate of the sublinear convergence rate} in terms of the dimension of the underlying space and the maximum degree of the polynomials  involved (Theorem~\ref{TheMTR:3}).
\end{enumerate}

The remainder of the paper is organized as follows: in Section~\ref{sec:preliminaries} we recall definitions and key facts used in our analysis. In Section~\ref{sec:averaged} we investigate the rate of convergence of the \emph{quasi-cyclic algorithm}. In Section~\ref{sec:DR} we specialize these results to the classical Douglas--Rachford algorithm and its cyclic variants. In Section~\ref{sec:damped DR} we consider a damped version of the Douglas--Rachford algorithm. In Section~\ref{sec:examples} we establish  explicit convergence rates for two illustrative problems. We conclude the paper in Section~\ref{sec:conclusion} by discussing possible directions for future research.

\section{Preliminaries}\label{sec:preliminaries}
  Throughout this paper our setting is a  (real) \emph{Hilbert space} $H$ with inner product $\langle \cdot, \cdot\rangle$. The \emph{induced norm} is defined by $\|x\|:=\sqrt{\langle x, x\rangle}$ for all $x \in H$. Given a closed convex subset $A$ of $H$, the \emph{(nearest point) projection} operator is the operator ${\rm P_A}:H\to A$ given by
   $${\rm P_A}x=\argmin_{a\in A}\|x-a\|.$$
 Let us now recall various definitions and facts used throughout this work, beginning with the notion of \emph{Fej\'er monotonicity}.
\begin{definition}[Fej\'{e}r monotonicity]
Let $A$ be a non-empty convex subset of a Hilbert space $H$. A sequence $(x_k)_{k\in\NN}$ in
$H$ is \emph{Fej\'{e}r
monotone} with respect to $A$ if, for all $a\in A$, we have
\begin{align*}
\|x_{k+1}-a\|\leq\|x_k-a\|\quad\forall k\in\NN.
\end{align*}
\end{definition}

\begin{fact}[{Shadows of Fej\'{e}r monotone sequences \cite[Th.~5.7(iv)]{BB}}] \label{FactPr:3}
Let $A$ be a non-empty closed  convex subset of a Hilbert space $H$ and let $(x_k)_{k\in\NN}$ be Fej\'{e}r monotone with respect to $A$. Then ${\rm P_A}(x_k)\to x$, in norm, for some $x\in A$.
\end{fact}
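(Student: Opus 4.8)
The plan is to show that the shadow sequence $p_k := {\rm P_A}(x_k)$ is Cauchy; since $A$ is closed and each $p_k \in A$, its limit then automatically lies in $A$. The whole argument rests on the variational characterization of the projection onto a closed convex set, namely $\langle x - {\rm P_A}x,\, a - {\rm P_A}x\rangle \le 0$ for every $a \in A$, used in tandem with the defining Fej\'er inequality.

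First I would introduce the distance sequence $d_k := \di(x_k, A) = \|x_k - p_k\|$ and check that it is nonincreasing. Indeed, since $p_k \in A$, Fej\'er monotonicity gives $\|x_{k+1} - p_k\| \le \|x_k - p_k\| = d_k$, and because $p_{k+1}$ is the nearest point of $A$ to $x_{k+1}$ we obtain $d_{k+1} \le \|x_{k+1} - p_k\| \le d_k$. Being nonincreasing and bounded below by $0$, the sequence $(d_k)$ converges; in particular $(d_k^2)$ is a Cauchy sequence of reals.

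The main step is a Pythagorean-type estimate. Expanding the square and invoking the variational inequality with $x = x_m$ and $a = p_k$ yields, for all $m \ge k$,
\[
\|x_m - p_k\|^2 \ge \|x_m - p_m\|^2 + \|p_m - p_k\|^2 = d_m^2 + \|p_m - p_k\|^2 .
\]
On the other hand, applying Fej\'er monotonicity with the reference point $p_k \in A$ and chaining the inequalities from index $k$ to $m$ gives $\|x_m - p_k\| \le \|x_k - p_k\| = d_k$ whenever $m \ge k$. Combining the two bounds produces $\|p_m - p_k\|^2 \le d_k^2 - d_m^2$ for all $m \ge k$.

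Since $(d_k^2)$ converges, the right-hand side $d_k^2 - d_m^2$ tends to $0$ as $k \to \infty$, uniformly in $m \ge k$; hence $(p_k)$ is Cauchy and converges in norm to some $x \in H$. As $A$ is closed and $p_k \in A$ for every $k$, the limit $x$ belongs to $A$, which is precisely the claim. I expect the only nonroutine ingredient to be the Pythagorean inequality above: this is exactly where convexity of $A$ enters, through the obtuse-angle property of the projection, and it is what upgrades mere boundedness of $(d_k)$ into convergence of the shadows. Everything else is the monotonicity bookkeeping for $(d_k)$.
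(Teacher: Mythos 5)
Your argument is correct and is precisely the standard proof of this result; the paper itself gives no proof but simply cites \cite[Th.~5.7(iv)]{BB}, where the same combination of the obtuse-angle (variational) characterization of the projection, the monotonicity of $\di(x_k,A)$, and the resulting Cauchy estimate $\|p_m-p_k\|^2\le d_k^2-d_m^2$ is used. Nothing to add.
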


\begin{fact}[Fej\'er monotone convergence {\cite[Th.~3.3(iv)]{Heinz_Set}}] \label{FactPr:2}
Let $A$ be a non-empty closed  convex subset of a Hilbert space $H$ and let $(x_k)_{k\in\NN}$ be Fej\'{e}r
monotone with respect to $A$ with $x_k\to x\in A$, in norm. Then $\|x_k-x\|\leq2\di(x_k,A)$.
\end{fact}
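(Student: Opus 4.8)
The plan is to exploit Fej\'er monotonicity by anchoring it at a single, cleverly chosen point of $A$ and then passing to the limit. Fix an arbitrary index $k$ and set $p:={\rm P_A}(x_k)$, the nearest point of $A$ to $x_k$, so that by definition $\|x_k-p\|=\di(x_k,A)$. The crucial observation is that although $p$ depends on $k$, once $k$ is fixed $p$ is a \emph{single} element of $A$, and Fej\'er monotonicity applies to each such element.

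First I would invoke Fej\'er monotonicity with respect to the fixed point $p\in A$: since $\|x_{j+1}-p\|\le\|x_j-p\|$ for every $j$, the real sequence $(\|x_j-p\|)_{j\ge k}$ is non-increasing, and in particular
\begin{equation*}
\|x_j-p\|\le\|x_k-p\|=\di(x_k,A)\qquad\text{for all }j\ge k.
\end{equation*}
Next I would let $j\to\infty$. By hypothesis $x_j\to x$ in norm, so continuity of the norm gives $\|x-p\|=\lim_{j\to\infty}\|x_j-p\|\le\di(x_k,A)$. Thus the limit point $x$ lies no farther from $p$ than $x_k$ itself does. Finally, a single application of the triangle inequality closes the argument:
\begin{equation*}
\|x_k-x\|\le\|x_k-p\|+\|p-x\|\le\di(x_k,A)+\di(x_k,A)=2\,\di(x_k,A),
\end{equation*}
which is the desired estimate.

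There is no genuine analytic obstacle here; the whole content lies in the choice of anchor. The tempting move would be to apply Fej\'er monotonicity directly at the limit $x\in A$, but that only yields $\|x_j-x\|\le\|x_k-x\|$, which is a statement about convergence speed relative to $x$ rather than a bound by the distance to $A$. The point I would stress is that anchoring instead at $p={\rm P_A}(x_k)$ -- where the monotonicity bound on the right-hand side is \emph{exactly} $\di(x_k,A)$ -- is precisely what converts the one-sided ``distance to a fixed member of $A$ is non-increasing'' property into the two-sided bound relating $\|x_k-x\|$ and $\di(x_k,A)$.
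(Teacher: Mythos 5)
Your argument is correct and is essentially the standard proof of this classical fact (the paper itself only cites \cite[Th.~3.3(iv)]{Heinz_Set} without reproducing the argument): anchoring the Fej\'er inequality at $p={\rm P_A}(x_k)$, passing to the limit, and applying the triangle inequality is exactly the intended route. No gaps.
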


 We now turn our attention to a H\"older regularity property for typically finite collections of sets.

\begin{definition}[Bounded H\"older regular intersection]\label{def:holder regular}
 Let $\{C_j\}_{j\in\mathbb{J}}$ be a collection of closed convex subsets in a Hilbert space $H$ with non-empty intersection. The collection $\{C_j\}_{j\in\mathbb{J}}$ has a \emph{bounded H\"older regular intersection} if, for each bounded set $K$, there exists an exponent $\gamma\in(0,1]$ and a scalar $\beta>0$ such that
  $$\di\left(x,\cap_{j\in\mathbb{J}} C_j\right)\leq \beta \left(\max_{j\in\mathbb{J}} d(x,C_j)\right)^\gamma\quad \forall x\in K.$$
 Furthermore, if the exponent $\gamma$ does not depend on the set $K$, we say the collection $\{C_j\}_{j\in\mathbb{J}}$ is \emph{bounded H\"older regular with uniform exponent $\gamma$}.
 \end{definition}

 It is clear, from Definition~\ref{def:holder regular}, that any collection containing only a single set trivially has a bounded H\"older regular intersection with uniform
 exponent $\gamma=1$. More generally, Definition~\ref{def:holder regular} with $\gamma=1$ is well-studied in the literature where it appears, amongst other names, as \emph{bounded linear regularity} \cite{BB}. For a recent study, the reader is referred to \cite[Remark~7]{KrugerThao}. The local counterpart to Definition~\ref{def:holder regular} has been characterized in \cite[Th.~1]{KrugerThao} under the name of \emph{metric $[\gamma]$-subregularity}.

 We next turn our attention to a nonexpansivity notion for operators.
\begin{definition}\label{def:expansivity prop}
An operator $T \colon H\to H$  is:
\begin{enumerate}[(a)]
\item \emph{non-expansive} if,
for all $x,y\in H$, $$\|T(x)-T(y)\| \le \|x-y\|;$$

\item \emph{firmly non-expansive}  if,
for all $x,y\in H$,
   \[
 \|T(x)-T(y)\|^2 + \|(I-T)(x)-(I-T)(y)\|^2 \le \|x-y\|^2;
\]
\item\label{def:averaged} \emph{$\alpha$-averaged} for some $\alpha \in (0,1)$, if there exists a non-expansive mapping $R \colon H\to H$ such that
  $$T=(1-\alpha)I+\alpha R.$$
\end{enumerate}
\end{definition}

The class of firmly non-expansive mappings comprises precisely the 1/2-averaged
mappings, and any $\alpha$-averaged operator is non-expansive \cite[Ch.~4]{BC2011}. The term “av-
eraged mapping” was coined in \cite{BaillonBruckReich}. The following fact provides a characterization of averaged maps that is useful for our purposes.

\begin{fact}[Characterization of averaged maps {\cite[Prop.~4.25(iii)]{BC2011}}]\label{fact:averaged}
 Let  $T \colon H\to H$  be an $\alpha$-averaged operator on a Hilbert space with $\alpha\in(0,1)$. Then, for all $x,y\in H$,
   \[
 \|T(x)-T(y)\|^2 + \frac{1-\alpha}{\alpha}\|(I-T)(x)-(I-T)(y)\|^2 \le \|x-y\|^2.
\]
\end{fact}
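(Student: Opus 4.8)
The plan is to unwind the definition of $\alpha$-averagedness and reduce the claimed inequality to a convex-combination identity together with the nonexpansivity of the generating operator. Since $T$ is $\alpha$-averaged, write $T = (1-\alpha)I + \alpha R$ with $R$ nonexpansive. First I would record the elementary identity $I - T = \alpha(I-R)$, which follows immediately from the definition. Fixing $x,y\in H$ and abbreviating $a := x-y$ and $b := R(x)-R(y)$, the two quantities on the left-hand side become $T(x)-T(y) = (1-\alpha)a + \alpha b$ and $(I-T)(x)-(I-T)(y) = \alpha(a-b)$.

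The core of the argument is a direct expansion via the inner product. Expanding the first square gives $\|(1-\alpha)a+\alpha b\|^2 = (1-\alpha)^2\|a\|^2 + 2\alpha(1-\alpha)\langle a,b\rangle + \alpha^2\|b\|^2$, while the second term contributes $\tfrac{1-\alpha}{\alpha}\cdot\alpha^2\|a-b\|^2 = \alpha(1-\alpha)\big(\|a\|^2 - 2\langle a,b\rangle + \|b\|^2\big)$. I would then add these and collect coefficients. The step I expect to do the real work is the observation that the cross terms cancel exactly -- the $+2\alpha(1-\alpha)\langle a,b\rangle$ from the first expansion is annihilated by the $-2\alpha(1-\alpha)\langle a,b\rangle$ from the second -- and that the remaining coefficients collapse: $(1-\alpha)^2 + \alpha(1-\alpha) = 1-\alpha$ on $\|a\|^2$, and $\alpha^2 + \alpha(1-\alpha) = \alpha$ on $\|b\|^2$. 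Hence the left-hand side equals exactly the convex combination $(1-\alpha)\|a\|^2 + \alpha\|b\|^2$.

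Finally I would invoke nonexpansivity of $R$, namely $\|b\| = \|R(x)-R(y)\| \le \|x-y\| = \|a\|$, whence $\|b\|^2 \le \|a\|^2$. Substituting this bound into the convex combination yields $(1-\alpha)\|a\|^2 + \alpha\|b\|^2 \le \|a\|^2 = \|x-y\|^2$, which is precisely the assertion. The only genuine subtlety is bookkeeping: one must check that the constant $\tfrac{1-\alpha}{\alpha}$ is exactly what is needed so that, after multiplying by the $\alpha^2$ produced by $I-T = \alpha(I-R)$, the weight on the $\|a-b\|^2$ term equals the $\alpha(1-\alpha)$ required both for the cross terms to vanish and for the surviving coefficients to telescope into a convex combination. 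Any other constant would leave a nonzero residual and break the identity.
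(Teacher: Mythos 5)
Your computation is correct: the identity $(1-\alpha)^2+\alpha(1-\alpha)=1-\alpha$, $\alpha^2+\alpha(1-\alpha)=\alpha$, and the exact cancellation of the cross terms do reduce the left-hand side to the convex combination $(1-\alpha)\|a\|^2+\alpha\|b\|^2$, after which nonexpansivity of $R$ finishes the argument. The paper states this as a Fact cited from \cite[Prop.~4.25(iii)]{BC2011} without reproducing a proof, and your argument is precisely the standard verification given there, so there is nothing to add.
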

Denote the set of \emph{fixed points} of an operator  $T \colon H\to H$ by
\[
{\rm Fix\,}T=\{x \in H \mid T(x)=x\}.
\]
The following definition is of  a H\"older regularity property for operators.
\begin{definition}[Bounded H\"older regular operators]\label{def:holder regular operators}
 An operator  $T \colon H\to H$  is \emph{bounded  H\"{o}lder regular} if, for each bounded set $K\subseteq H$,
there exists an exponent $\gamma \in (0,1]$ and a scalar $\mu>0$ such that
\[
d(x,{\rm Fix\, }T) \le \mu \|x-T(x)\|^{\gamma}\quad \forall x\in K.
\]
Furthermore, if the exponent $\gamma$ does not depend on the set $K$, we say that $T$ is \emph{
bounded H\"older regular with uniform exponent $\gamma$}.
\end{definition}

 Note that, in the case when $\gamma=1$, Definition~\ref{def:holder regular operators} collapses to the well studied concept of bounded linear regularity \cite{BB} and has been  used in \cite{BNP} to analyze linear convergence of algorithms involving non-expansive mappings.
Moreover, it is also worth noting that  if an operator $T$ is bounded  H\"{o}lder regular with exponent $\gamma \in (0,1]$ then the mapping
$x \mapsto x-T(x)$ is bounded H\"{o}lder metric subregular with exponent $\gamma$.  H\"{o}lder metric subregularity -- which is
a natural extension of  metric subregularity -- along with H\"older type error bounds has  recently been studied in \cite{Li_Boris,Li0,Li1,Kruger}.

 Finally, we recall the definitions of  \emph{semi-algebraic functions} and \emph{semi-algebraic sets}.
 \begin{definition}[Semi-algebraic sets and functions {\cite{real}}]
 A set $D \subseteq \mathbb{R}^n$ is  \emph{semi-algebraic} if
\begin{equation}\label{eq:semi alg set}
 D: = \bigcap_{j=1}^{s}\,\bigcup_{i=1}^{l} \{x \in \RR^n\mid f_{ij}(x)=0, h_{ij}(x) < 0\}
\end{equation}
for integers $l,s$ and polynomial functions $f_{ij}, \,
h_{ij}$ on $\mathbb{R}^{n}$ $(1 \le i\le l, \, 1 \le j \le s)$.
A mapping $F \colon \mathbb{R}^{n} \rightarrow \mathbb{R}^p$ is
said to be \emph{semi-algebraic} if its \emph{graph}, ${\rm gph}F:=\{(x,F(x))\mid x \in
\mathbb{R}^n\}$, is a semi-algebraic set in $\mathbb{R}^n \times \mathbb{R}^p$.
\end{definition}

 The next fact summarises some  fundamental properties of semi-algebraic sets and functions.

\begin{fact}[Properties of semi-algebraic sets/functions]\label{fact:6}
  The following statements hold.
\begin{enumerate}[(P1)]
\item Any polynomial is a semi-algebraic function.

\item Let $D$ be a semi-algebraic
set. Then $\di(\cdot,D)$ is a
semi-algebraic function.
\item If $f,\, g$ are
semi-algebraic functions on $\mathbb{R}^n$ and $\lambda \in \mathbb{R}$
then $f+g$, $\lambda f$, $\max\{f,g\}$, $fg$ are  semi-algebraic.
\item  If $f_i$ are semi-algebraic functions, $i=1,\ldots,m$,
 and $\lambda \in \mathbb{R}$, then the sets $\{x\mid f_i(x)=\lambda, i=1,\ldots,m\}$,
 $\{x\mid f_i(x) \le \lambda, \, i=1,\ldots,m\}$ are
semi-algebraic sets.
\item If $F:\mathbb{R}^n \rightarrow \mathbb{R}^p$ and $G:\mathbb{R}^p \rightarrow \mathbb{R}^q$ are semi-algebraic mappings, then
their composition $G \circ F$ is also a semi-algebraic mapping.
\item ({\L}ojasiewicz's inequality) If
$\phi,\psi$ are two continuous semi-algebraic functions on
a compact semi-algebraic set $K \subseteq \mathbb{R}^n$ such that
$\emptyset \neq \phi^{-1}(0) \subseteq \psi^{-1}(0)$ then there
exist constants $c>0$ and $\tau \in (0,1]$ such that
\[
 |\psi(x)| \le  c |\phi(x)|^{\tau}\quad \forall x \in K.
\]
\end{enumerate}
\end{fact}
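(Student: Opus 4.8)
The plan is to build almost everything on a single engine, the Tarski--Seidenberg projection theorem: the image of a semi-algebraic set under a coordinate projection is again semi-algebraic. In the form I would actually use it, this says that any set cut out by a first-order formula whose atoms are polynomial equalities and inequalities (and in which all quantifiers range over real variables) is semi-algebraic. Before invoking it I would first record (P1), which is immediate from the definition: for a polynomial $p$ the graph $\{(x,y) : y-p(x)=0\}$ is exactly of the form (\ref{eq:semi alg set}) with $s=l=1$, $f_{11}(x,y)=y-p(x)$ and the constant $h_{11}\equiv -1<0$.

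Next I would obtain (P3), (P4) and (P5) by exhibiting the relevant sets as projections of semi-algebraic sets. For example $ {\rm gph}(f+g)=\{(x,z):\exists\,u,v,\ (x,u)\in{\rm gph}f,\ (x,v)\in{\rm gph}g,\ z=u+v\}$ and ${\rm gph}(G\circ F)=\{(x,z):\exists\,y,\ (x,y)\in{\rm gph}F,\ (y,z)\in{\rm gph}G\}$; each bracketed condition is a conjunction of semi-algebraic predicates, so eliminating the existentially quantified variables preserves semi-algebraicity, giving (P3) and (P5) (the cases $\lambda f$, $\max\{f,g\}$, $fg$ are handled identically). Then (P4) follows because each listed set is a finite Boolean combination of slices $\{x:(x,\lambda)\in{\rm gph}f_i\}$, and the semi-algebraic sets are closed under finite union, intersection and complement directly from the definition. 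For (P2), after replacing $D$ by its (still semi-algebraic) closure so that the infimum is attained, I would encode $r=\di(x,D)$ by the formula ``$r\ge 0$, and $\|x-d\|^2\ge r^2$ for all $d\in D$, and $\|x-d\|^2\le r^2$ for some $d\in D$'', whose graph is built from polynomial predicates by quantification; Tarski--Seidenberg then yields that $\di(\cdot,D)$ is semi-algebraic.

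The genuinely hard part is (P6), the {\L}ojasiewicz inequality, which is not a consequence of Boolean operations and projection alone. The approach I would take is by contradiction: assuming no pair $(c,\tau)$ works, one extracts points of $K$ at which $|\psi|$ dwarfs every power of $|\phi|$, and accumulates them at some $x_*$ with $\phi(x_*)=0$ (hence, by the hypothesis $\phi^{-1}(0)\subseteq\psi^{-1}(0)$, also $\psi(x_*)=0$). I would then apply the \textbf{curve selection lemma} for semi-algebraic sets to obtain a semi-algebraic arc $t\mapsto\gamma(t)$, $t\in[0,\epsilon)$, with $\gamma(0)=x_*$ along which the ratio degenerates. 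Since $t\mapsto\phi(\gamma(t))$ and $t\mapsto\psi(\gamma(t))$ are one-variable semi-algebraic functions, they admit Puiseux expansions near $t=0$; comparing their leading fractional exponents produces a fixed rational $\tau\in(0,1]$ and constant with $|\psi(\gamma(t))|\le c\,|\phi(\gamma(t))|^{\tau}$ on the arc, contradicting the assumed failure. The main obstacle is exactly this curve-selection and Puiseux machinery; everything else is routine once the projection theorem is available. A cleaner but heavier alternative, which I would mention, is to invoke the cell decomposition theorem to reduce directly to the one-variable semi-algebraic case, where the power-type bound near a zero is classical.
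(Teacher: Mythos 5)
The paper does not actually prove this Fact: its ``proof'' consists of noting that (P1) and (P4) are immediate from the definitions and citing the standard real algebraic geometry reference (Propositions~2.2.6 and~2.2.8 and Corollary~2.6.7 of Bochnak--Coste--Roy) for (P2), (P3), (P5) and (P6). Your proposal instead supplies the content of those citations, and it follows the standard textbook route: Tarski--Seidenberg / quantifier elimination handles (P1)--(P5) exactly as you describe (your first-order encodings of graphs of sums, products, maxima, compositions and of the distance function are the usual ones, and the only unstated step is that complements of basic semi-algebraic sets are again finite unions of basic ones, which is routine), while (P6) requires genuinely more machinery. So the two ``approaches'' are not really comparable -- the paper outsources the work, you do it -- but what you do is faithful to what the cited results contain.

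One caveat on your sketch of (P6): as stated, the contradiction argument is slightly underspecified, because the set of points at which the inequality fails \emph{for every} pair $(c,\tau)$ is an intersection over uncountably many conditions and is not itself semi-algebraic, so the curve selection lemma cannot be applied to it directly. The standard fix is precisely the reduction you mention at the end: introduce the one-variable semi-algebraic function $g(t):=\sup\{|\psi(x)| : x\in K,\ |\phi(x)|\le t\}$ (semi-algebraic by projection, finite by compactness, with $g(0)=0$ by the hypothesis $\phi^{-1}(0)\subseteq\psi^{-1}(0)$ and continuity), and then invoke the Puiseux-type growth bound $g(t)\le c\,t^{\tau}$ for small $t>0$ valid for monotone one-variable semi-algebraic germs; the case $|\phi(x)|$ bounded below is absorbed by compactness of $K$ and boundedness of $\psi$. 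With that substitution your argument is complete and is essentially the proof of Corollary~2.6.7 in the cited reference.
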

\begin{proof}
 (P1) and (P4) follow directly from the definitions. See \cite[Prop.~2.2.8]{real} for (P2), \cite[Prop.~2.2.6]{real} for (P3) and (P5), and \cite[Cor.~2.6.7]{real} for (P6).
\end{proof}

\begin{definition}[Basic semi-algebraic convex sets in $\mathbb{R}^n$]
 A set $C \subseteq \mathbb{R}^n$ is  a \emph{basic semi-algebraic convex} set if there exist $\gamma \in\NN$ and
convex polynomial functions, $g_{j}, j=1,\ldots,\gamma$ such that $C=\{x  \in \mathbb{R}^n\mid g_{j}(x) \le 0, j=1,\cdots,\gamma\}.$
\end{definition}
 Any basic semi-algebraic convex set is clearly  convex and semi-algebraic. On the other hand, there exist sets which are both convex and semi-algebraic but fail to be basic semi-algebraic convex set, see \cite{Bor_Li_Yao}.

 It transpires out that any finite collection of basic semi-algebraic convex sets has an intersection which is  boundedly H\"older regular with uniform exponent (without requiring further regularity assumptions). In the following lemma, $B(n)$ denotes the \emph{central binomial coefficient} with respect to $n$ given by $\binom{n}{[n/2]}$ where $[\,\cdot\,]$ denotes the integer part of a real number.
\begin{lemma}[H\"older regularity of basic semi-algebraic convex sets in $\mathbb{R}^n$ {\cite{Bor_Li_Yao}}]\label{ThesumSet:1}
Let $C_i$ be  basic convex semi-algebraic sets in $\mathbb{R}^n$ given by
$C_i=\{x \in \mathbb{R}^n \mid g_{ij}(x) \le 0, j=1,\ldots,m_i\}, i=1,\ldots,m$ where $g_{ij}$ are convex polynomials on $\mathbb{R}^n$ with degree at most $d$.
Let $\theta>0$ and $K\subseteq\RR^n$ be a compact set.
  Then  there exists  $c> 0$ such that
$$\di^{\theta} (x,C) \le c \left(\sum_{i=1}^m \di^{\theta}(x, C_i)\right)^{\gamma} \quad \forall x \in K,$$
where $\gamma=\left[\min\left\{\frac{(2d-1)^n+1}{2},\, B(n-1)d^n\right\}\right]^{-1}$.
\end{lemma}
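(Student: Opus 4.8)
The plan is to peel the statement down to a single H\"older error bound for the convex system defining $C:=\bigcap_{i=1}^{m}C_i$ (which is non-empty, as throughout), and then to read off the exponent from a {\L}ojasiewicz-type inequality. The first move is to reduce to the case $\theta=1$. Suppose one can produce, for some exponent $\gamma\in(0,1]$, a constant $c_1>0$ with
\begin{equation}\label{plan:base}
 \di(x,C)\le c_1\left(\max_{1\le i\le m}\di(x,C_i)\right)^{\gamma}\quad\forall x\in K.
\end{equation}
Raising \eqref{plan:base} to the power $\theta$ and using that $t\mapsto t^{\theta}$ is increasing on $[0,\infty)$, so that $(\max_i\di(x,C_i))^{\theta}=\max_i\di^{\theta}(x,C_i)$, yields $\di^{\theta}(x,C)\le c_1^{\theta}\big(\max_i\di^{\theta}(x,C_i)\big)^{\gamma}\le c_1^{\theta}\big(\sum_i\di^{\theta}(x,C_i)\big)^{\gamma}$, which is exactly the claim with $c=c_1^{\theta}$ and the \emph{same} exponent $\gamma$. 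This reduction is essential rather than cosmetic: for irrational $\theta$ the map $t\mapsto t^{\theta}$ is not semi-algebraic, so Fact~\ref{fact:6} can only be applied once the $\theta$-th powers have been stripped away.

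Next I would reduce \eqref{plan:base} to an error bound for a single defining function. Put $g_i:=\max_{1\le j\le m_i}g_{ij}$, so that each $g_i$ is convex (a maximum of convex polynomials) with $C_i=\{x:g_i(x)\le 0\}$, and $G:=\max_{1\le i\le m}g_i=\max_{i,j}g_{ij}$ is convex with $C=\{x:G(x)\le 0\}$. Writing $[t]_+:=\max\{t,0\}$, the key geometric point is that excess function values are controlled by distances. Each $g_i$, being finite and convex, is Lipschitz on every compact set; since $K$ is compact and each projection ${\rm P}_{C_i}$ is non-expansive, the points ${\rm P}_{C_i}(x)$ for $x\in K$ lie in a fixed compact set on which $g_i$ has some Lipschitz constant $L_i$. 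For $x\in K\setminus C_i$, taking $y={\rm P}_{C_i}(x)\in C_i$ gives $g_i(y)\le 0$, whence $[g_i(x)]_+=g_i(x)\le g_i(y)+L_i\|x-y\|\le L_i\di(x,C_i)$, and the same bound is trivial for $x\in C_i$. With $L:=\max_i L_i$ and the identity $[G(x)]_+=\max_i[g_i(x)]_+$ this gives
\begin{equation}\label{plan:res}
 [G(x)]_+\le L\max_{1\le i\le m}\di(x,C_i)\quad\forall x\in K.
\end{equation}
Hence \eqref{plan:base} follows immediately from the error bound $\di(x,C)\le\tau\,[G(x)]_+^{\gamma}$ on $K$, since then $\di(x,C)\le\tau L^{\gamma}\big(\max_i\di(x,C_i)\big)^{\gamma}$.

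It remains to prove the error bound
\begin{equation}\label{plan:eb}
 \di(x,C)\le\tau\,[G(x)]_+^{\gamma}\quad\forall x\in K,
\end{equation}
and here the qualitative and quantitative parts split apart. The \emph{qualitative} part is immediate from {\L}ojasiewicz's inequality (P6): the functions $\psi=\di(\cdot,C)$ and $\phi=[G]_+$ are continuous and semi-algebraic by (P1)--(P5), and, after enlarging $K$ to a closed ball so that it too is semi-algebraic, $\phi^{-1}(0)=C=\psi^{-1}(0)\neq\emptyset$; thus (P6) supplies \emph{some} exponent $\gamma\in(0,1]$ for which \eqref{plan:eb} holds, already proving the lemma with an unspecified exponent. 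The \textbf{main obstacle} is the \emph{explicit} value $\gamma=\left[\min\left\{\frac{(2d-1)^n+1}{2},\,B(n-1)d^n\right\}\right]^{-1}$, which depends only on $n$ and $d$ and, strikingly, on neither the number of sets $m$ nor the numbers $m_i$ of constraints. To obtain it I would bound the {\L}ojasiewicz degree $N:=1/\gamma$ of the convex system $\{g_{ij}\}$ in two complementary ways and keep the smaller. One route is a degree--dimension estimate of the {\L}ojasiewicz exponent, applied to auxiliary polynomial data of degree at most $2d-1$ built from the $g_{ij}$ and their gradients, which yields $N\le\frac{(2d-1)^n+1}{2}$. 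The other route exploits convexity more heavily, through an induction on the dimension in which slicing the feasible set by supporting hyperplanes produces a recursion whose solution is governed by the central binomial coefficient, giving $N\le B(n-1)d^n$. Because on a compact set a valid error bound persists under any decrease of the exponent (at the cost of enlarging the constant), one is free to keep the better, i.e.\ larger, exponent, which is the stated $\gamma$. All the genuine difficulty resides in these two {\L}ojasiewicz-exponent estimates for convex polynomials; granting them, the reductions above assemble the inequality. As a sanity check, for $d=1$ the first bound already gives $N=1$, hence $\gamma=1$, recovering the Hoffman (bounded linear regularity) exponent for polyhedra.
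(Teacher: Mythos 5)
Your reductions are sound and, up to the point where they stop, they track the actual argument: the paper itself offers no proof of Lemma~\ref{ThesumSet:1} but imports it wholesale from \cite{Bor_Li_Yao}, and the proof there does proceed by (i) reducing general $\theta>0$ to $\theta=1$ essentially as you do, (ii) passing from distances to the residual of the aggregate convex function $G=\max_{i,j}g_{ij}$ via local Lipschitz continuity of convex polynomials, and (iii) invoking explicit error-bound/{\L}ojasiewicz exponents for convex polynomial systems. Your observation that $t\mapsto t^{\theta}$ fails to be semi-algebraic for irrational $\theta$, so that the reduction to $\theta=1$ is genuinely necessary before Fact~\ref{fact:6}(P6) can be applied, is correct and worth making. (One small point: when you enlarge $K$ to a closed ball before applying (P6) you must also take it large enough to meet $C$, since (P6) requires $\phi^{-1}(0)\neq\emptyset$; this is easily arranged but should be said.)

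The genuine gap is step (iii). The qualitative {\L}ojasiewicz inequality yields \emph{some} exponent, but the entire content of the lemma---and the only reason it is usable in Theorem~\ref{TheMTR:3}, where the convergence rate is read off from $\gamma$---is the explicit value $\gamma=\left[\min\left\{\frac{(2d-1)^n+1}{2},\,B(n-1)d^n\right\}\right]^{-1}$, depending only on $n$ and $d$ and not on $m$ or the $m_i$. You reduce this to two claims: that the relevant {\L}ojasiewicz degree is at most $\frac{(2d-1)^n+1}{2}$ (via an unspecified ``degree--dimension estimate applied to auxiliary polynomial data of degree at most $2d-1$'') and at most $B(n-1)d^n$ (via an unspecified ``induction on the dimension''). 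Neither claim is stated precisely, let alone proved, and neither is elementary: in \cite{Bor_Li_Yao} the first is imported from the global error-bound theory for convex and piecewise convex polynomials (cf.\ \cite{Li0,Li1}) and the second from quantitative {\L}ojasiewicz-exponent estimates for polynomial maps. As written, you have correctly rebuilt the scaffolding and black-boxed everything the lemma actually asserts; to close the argument you would need either to prove these two exponent bounds or to cite them as precisely stated external results and verify that your residual function $[G]_+$ falls within their scope.
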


We also recall the following useful recurrence relationship established in \cite{Bor_Li_Yao}.
\begin{lemma}[Recurrence relationship \cite{Bor_Li_Yao}]\label{recur} Let $p >0$, and let $\{\delta_t\}_{t\in\NN}$ and $\{\beta_t\}_{t\in\NN\textsl{•}}$ be two sequences of nonnegative numbers such that
\[
\beta_{t+1} \le \beta_t(1-\delta_t \beta_t^{p})\quad\forall t\in\NN.
\]
Then
\begin{equation*}
\beta_{t}\le\bigg(\beta_0^{-p}+\displaystyle p \sum_{i=0}^{t-1} \delta_i \bigg)^{-\frac{1}{p}}\quad\forall t\in\mathbb N,
\end{equation*} where the convention that $\frac{1}{0}=+\infty$ is adopted.
\end{lemma}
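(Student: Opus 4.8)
The plan is to reformulate the conclusion in reciprocal form and then establish it by a telescoping induction. Since all terms are nonnegative and, under the convention $1/0 = +\infty$, the target inequality $\beta_t \le (\beta_0^{-p} + p\sum_{i=0}^{t-1}\delta_i)^{-1/p}$ is equivalent to
$$\beta_t^{-p} \ge \beta_0^{-p} + p\sum_{i=0}^{t-1}\delta_i,$$
I would aim to prove the latter by induction on $t$. The base case $t=0$ holds with equality (the sum being empty), so the crux is the inductive step, which reduces to the single per-step estimate $\beta_{t+1}^{-p} \ge \beta_t^{-p} + p\delta_t$. Summing this over the index and invoking the induction hypothesis then yields the displayed bound, and inverting via the map $x\mapsto x^{-1/p}$, which is order-reversing on the positive reals, recovers the stated conclusion.

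To establish the per-step inequality $\beta_{t+1}^{-p} \ge \beta_t^{-p} + p\delta_t$, I would first observe that the hypothesis $0\le\beta_{t+1}\le\beta_t(1-\delta_t\beta_t^{p})$ forces $1-\delta_t\beta_t^{p}\ge 0$ whenever $\beta_t>0$, so that the right-hand factor is legitimately nonnegative. Assuming $\beta_t>0$ and $\beta_{t+1}>0$ (the only nontrivial case), raising both sides of $\beta_{t+1}\le\beta_t(1-\delta_t\beta_t^{p})$ to the power $-p$ reverses the inequality and gives $\beta_{t+1}^{-p}\ge\beta_t^{-p}(1-\delta_t\beta_t^{p})^{-p}$. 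The decisive ingredient is then Bernoulli's inequality in the form $(1-x)^{-p}\ge 1+px$ for $0\le x<1$ and $p>0$, which follows from the convexity of $x\mapsto(1-x)^{-p}$ and the fact that a convex function lies above its tangent line at $x=0$. Applying this with $x=\delta_t\beta_t^{p}$ yields $\beta_{t+1}^{-p}\ge\beta_t^{-p}(1+p\delta_t\beta_t^{p})=\beta_t^{-p}+p\delta_t$, exactly as required.

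Finally, I would dispose of the degenerate cases consistently with the convention $1/0=+\infty$. If $\beta_t=0$ then $\beta_{t+1}\le 0$ forces $\beta_{t+1}=0$, so both $\beta_t^{-p}$ and $\beta_{t+1}^{-p}$ equal $+\infty$ and the per-step inequality holds trivially; likewise, if $\beta_t>0$ but $\beta_{t+1}=0$, the left-hand side is $+\infty$ while the right-hand side is finite. In particular, once some $\beta_{t_0}$ vanishes all subsequent terms vanish, which is compatible with the reciprocal bound. The only genuinely analytic step is the Bernoulli estimate; I do not anticipate any serious obstacle beyond carefully book-keeping the boundary cases and the $+\infty$ convention, together with checking that the terms being inverted are strictly positive before the order-reversing transformations are applied.
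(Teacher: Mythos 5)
Your argument is correct and complete: the reduction to the per-step inequality $\beta_{t+1}^{-p}\ge\beta_t^{-p}+p\delta_t$, the Bernoulli estimate $(1-x)^{-p}\ge 1+px$ on $[0,1)$, the telescoping, and the handling of the degenerate case $\beta_t=0$ under the convention $1/0=+\infty$ all check out. The paper itself gives no proof of Lemma~\ref{recur} (it is recalled from \cite{Bor_Li_Yao}), but your reciprocal-telescoping argument is essentially the standard one used there; the only cosmetic difference is that the cited source obtains the same per-step inequality from the convexity (gradient) inequality for $x\mapsto x^{-p}$ rather than from Bernoulli's inequality, and the two routes are interchangeable.
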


\section{The rate of convergence of the quasi-cyclic algorithm}\label{sec:averaged}

 In this section we investigate the rate of convergence of an abstract algorithm we call \emph{quasi-cyclic}. To define the algorithm, let $J$ be a finite set, and let $\{T_j\}_{j \in J}$ be a finite family of operators on a Hilbert space $H$. Given an initial point $x^0\in H$, the quasi-cyclic algorithm generates a sequence according to
\begin{equation}\label{eq:quasi_cyclic} x^{t+1} = \sum_{j \in J}w_{j,t}T_j(x^t)\quad\forall t \in \mathbb{N},\end{equation}
 for appropriately chosen weights $w_{j,t}\in\mathbb{R}$.

 The quasi-cyclic algorithm appears in \cite{BNP} where linear convergence of the algorithm was established under suitable regularity conditions. As we shall soon see, the quasi-cyclic algorithm provides a broad framework which covers many important existing algorithms including Douglas-Rachford algorithms, the cyclic projection algorithm, the Krasnoselskii--Mann method, and forward-backward splitting. To establish its convergence rate, we use three preparatory results.
\begin{lemma}\label{lem:asym}
 Let  $J$ be a finite set and let $\{T_j\}_{j \in J}$ be a finite family of
$\alpha$-averaged operators on a Hilbert space $H$ with $\cap_{j \in J}{\rm Fix\, }T_j \neq \emptyset$ and let $\alpha\in (0,1)$. For each $t \in \mathbb{N}$, let $w_{j,t} \in \mathbb{R}$, $j \in J$, be such that $w_{j,t}\ge 0$ and $\sum_{j \in J}w_{j,t}=1$.
Let $x^0 \in H$ and consider the quasi-cyclic algorithm generated by \begin{equation}
x^{t+1} = \sum_{j \in J}w_{j,t}T_j\big(x^t\big)\quad\forall t \in \mathbb{N}.\end{equation}  Suppose that $$\sigma:=\displaystyle \inf_{t \in \mathbb{N}} \inf_{j \in J_+(t)}\{w_{j,t}\}>0 \mbox{ where } J_+(t):=\{j \in J: w_{j,t}>0\} \mbox{ for each } t \in \mathbb{N}.$$
Then $\{x^t\}_{t \in \NN}$ is Fej\'er monotone with respect to $\cap_{j \in J} {\rm Fix} T_j$,
$\{\di(x^t,\cap_{j \in J}{\rm Fix\, }T_j)\}_{t\in\NN}$ is nonincreasing (and hence convergent)
and $\max_{j\in J_+(t)}\|x^t-T_j(x^t)\|\to0$ as $t\to\infty$.
\end{lemma}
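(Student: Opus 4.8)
The plan is to fix an arbitrary common fixed point $z\in\cap_{j\in J}{\rm Fix\,}T_j$ and derive a single \emph{descent inequality} from which all three conclusions follow. Using $T_j(z)=z$ and $\sum_{j\in J}w_{j,t}=1$, I would first write $x^{t+1}-z=\sum_{j\in J}w_{j,t}\bigl(T_j(x^t)-T_j(z)\bigr)$ and apply convexity of $\|\cdot\|^2$ (Jensen's inequality for the convex weights $w_{j,t}$) to obtain
\[
\|x^{t+1}-z\|^2 \le \sum_{j\in J}w_{j,t}\,\|T_j(x^t)-T_j(z)\|^2.
\]
Applying the averaged-map characterization, Fact~\ref{fact:averaged}, to each term with $x=x^t$ and $y=z$, and noting that $(I-T_j)(z)=0$, gives
\[
\|T_j(x^t)-T_j(z)\|^2 \le \|x^t-z\|^2-\frac{1-\alpha}{\alpha}\|x^t-T_j(x^t)\|^2.
\]
Combining these two estimates yields the key inequality
\[
\|x^{t+1}-z\|^2 \le \|x^t-z\|^2-\frac{1-\alpha}{\alpha}\sum_{j\in J}w_{j,t}\,\|x^t-T_j(x^t)\|^2.
\]

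Since the final sum is nonnegative and $\alpha\in(0,1)$, this immediately gives $\|x^{t+1}-z\|\le\|x^t-z\|$ for every $z\in\cap_{j\in J}{\rm Fix\,}T_j$, which is precisely Fej\'er monotonicity. Taking the infimum over all such $z$ on both sides then shows $\di(x^{t+1},\cap_{j\in J}{\rm Fix\,}T_j)\le\di(x^t,\cap_{j\in J}{\rm Fix\,}T_j)$, so this distance sequence is nonincreasing; being bounded below by $0$, it converges.

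For the asymptotic-regularity claim, I would rearrange the key inequality and sum over $t=0,\dots,N$, telescoping the right-hand side to get
\[
\frac{1-\alpha}{\alpha}\sum_{t=0}^{N}\sum_{j\in J}w_{j,t}\,\|x^t-T_j(x^t)\|^2 \le \|x^0-z\|^2-\|x^{N+1}-z\|^2 \le \|x^0-z\|^2.
\]
Letting $N\to\infty$ shows the double series converges, so its general term tends to zero, i.e. $\sum_{j\in J}w_{j,t}\,\|x^t-T_j(x^t)\|^2\to0$. The final step, where the hypothesis $\sigma>0$ is essential, converts this weighted control into uniform control over the active indices: for $j\in J_+(t)$ one has $w_{j,t}\ge\sigma$, while indices outside $J_+(t)$ contribute nothing, so
\[
\sigma\max_{j\in J_+(t)}\|x^t-T_j(x^t)\|^2 \le \sigma\sum_{j\in J_+(t)}\|x^t-T_j(x^t)\|^2 \le \sum_{j\in J}w_{j,t}\,\|x^t-T_j(x^t)\|^2 \to 0,
\]
whence $\max_{j\in J_+(t)}\|x^t-T_j(x^t)\|\to0$.

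The argument is mostly a matter of assembling the stated tools, so there is no deep obstacle; the one point requiring genuine care is this last step. Without the uniform lower bound $\sigma$ on the active weights, convergence of the weighted sum $\sum_{j\in J}w_{j,t}\,\|x^t-T_j(x^t)\|^2$ to zero would not, by itself, preclude an individual residual $\|x^t-T_j(x^t)\|$ from staying bounded away from zero, since its weight could shrink to compensate. The assumption $\sigma>0$ is exactly what rules this out and yields the maximum over $J_+(t)$ rather than merely a weighted average.
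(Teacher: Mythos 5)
Your proof is correct and follows essentially the same route as the paper: convexity of $\|\cdot\|^2$ over the weights, the averaged-map characterization (Fact~\ref{fact:averaged}) applied at a common fixed point, and the lower bound $w_{j,t}\ge\sigma$ on active weights to pass from the weighted sum to the maximum over $J_+(t)$. The only cosmetic difference is that you telescope the descent inequality to get summability of the residual terms, whereas the paper deduces that the consecutive differences of the convergent sequence $\{\|x^t-y\|^2\}$ tend to zero; these are equivalent ways of finishing the same argument.
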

\begin{proof}
 Let $y\in\cap_{j\in J}{\rm Fix}T_j$. Then, for all $t\in\NN$, convexity of $\|\cdot\|^2$ yields
  \begin{equation}\label{eq:lemma averaged}
   \|x^{t+1}-y\|^2=\|\sum_{j\in J}w_{j,t}T_j(x^t)-y\|^2\leq \sum_{j\in J}w_{j,t}\|T_j(x^t)-y\|^2\leq \|x^t-y\|^2,
  \end{equation}
  where the last inequality follows by the fact that each $T_j$ is $\alpha$-averaged (and so, is nonexpansive).
 Thus, $\{x^t\}_{t \in NN}$ is Fej\'er monotone with respect to $\cap_{j\in J}{\rm Fix}T_j$ and  $\{\|x^t-y\|^2\}_{t\in\NN}$ is a nonnegative, decreasing sequence and hence convergent. Furthermore, from \eqref{eq:lemma averaged} we obtain
 \begin{equation}\label{eq:asym1}
  \lim_{t\to \infty}\sum_{j\in J}w_{j,t}\|T_j(x^t)-y\|^2=\lim_{t\to\infty}\|x^t-y\|^2.
 \end{equation}
Since $T_j$ is $\alpha$-averaged for each $j\in J$, Fact~\ref{fact:averaged} implies, for all $t\in\NN$,
  \begin{equation*}
   \|T_j(x^t)-y\|^2+\frac{1-\alpha}{\alpha}\|x^t-T_j(x^t)\|^2\leq \|x^t-y\|^2,
  \end{equation*}
 from which, for sufficiently large $t$ , we deduce
  \begin{align*}
  \|x^t-y\|^2-\sum_{j\in J}w_{j,t}\|T_j(x^t)-y\|^2
  &\geq \frac{1-\alpha}{\alpha}\sum_{j\in J}w_{j,t}\|x^t-T_j(x^t)\|^2 \\
   &\geq \frac{1-\alpha}{\alpha}\sigma\max_{j\in J_+(t)}\|x^t-T_j(x^t)\|^2.
  \end{align*}
 Together with \eqref{eq:asym1} this gives $\max_{j\in J_+(t)}\|x^t-T_j(x^t)\|\to 0$ as claimed.
\end{proof}

The following proposition provides a convergence rate for Fej\'er monotone sequences  satisfying an additional property, which we  later show to be satisfied in the presence of H\"older regularity.

\begin{proposition}\label{prop:convergence rate}
 Let $F$ be a non-empty closed convex set in a Hilbert space $H$ and let $s$ be a positive integer. Suppose the sequence $\{x^t\}$ is Fej\'er monotone with respect to $F$ and satisfies
  \begin{equation}\label{eq:dist recurrence}
   \di^2(x^{(t+1)s},F) \le \di^2(x^{ts},F)-\delta\, \di^{2\theta}(x^{ts},F),\; \forall \;t\in\mathbb{N},
  \end{equation}
 for some $\delta>0$ and $\theta\geq 1$. Then $x^t \rightarrow \bar x$ for some $\bar x \in F$
 {and,  there exist constants $M_1,M_2\geq0$ and $r\in[0,1)$ such that
\[
\|x^{t}-\bar x\| \le \begin{cases}
                        M_1\, t^{-\frac{1}{2(\theta-1)}} &  \theta>1, \\
                        M_2\, r^t &  \theta=1. \\
                       \end{cases}\
\]
Furthermore, the constants may be chosen to be
 \begin{equation}\label{eq:explicit constants}
  \left\{\begin{array}{ccl}
   M_1&:=& 2 \max\{(2s)^{\frac{1}{2(\theta-1)}}\,\left[(\theta-1) \delta\right]^{-\frac{1}{2(\theta-1)}},(2s)^{\frac{1}{2(\theta-1)}}\, \di(x^{0},F)\} \\ M_2&:=&2\max\{\left(\sqrt[4s]{1-\delta}\right)^{-2s} \di (x^{0},F),\sqrt{\di (x^{0},F)}\} \\
   r&:=&\sqrt[4s]{1-\delta}, \end{array} \right.
  \end{equation}
 and $\delta$ necessarily lies in $(0,1]$ whenever $\theta=1$.}
\end{proposition}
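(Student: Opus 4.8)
The plan is to first establish convergence of the whole sequence to some $\bar x\in F$, and then to convert the recurrence \eqref{eq:dist recurrence} into an explicit decay estimate on $\di(x^t,F)$, which in turn controls $\|x^t-\bar x\|$ via Fej\'er monotonicity. For convergence, I would argue that $\{\di(x^t,F)\}$ is nonincreasing (a direct consequence of Fej\'er monotonicity), and from \eqref{eq:dist recurrence} it tends to zero, so $\di(x^t,F)\to0$. Fej\'er monotonicity then forces $\{x^t\}$ to converge in norm to some $\bar x\in F$; the cleanest route is to apply Fact~\ref{FactPr:3} (convergence of the shadow sequence ${\rm P}_F(x^t)$) together with the fact that $\|x^t-{\rm P}_F(x^t)\|=\di(x^t,F)\to0$, so that $x^t$ and its shadow share the same limit $\bar x\in F$.

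Next I would derive the decay rate for $\beta_t:=\di^2(x^{ts},F)$ along the subsequence indexed by $ts$. The key observation is that \eqref{eq:dist recurrence} reads $\beta_{t+1}\le\beta_t-\delta\,\beta_t^{\theta}$, which I would rewrite in the multiplicative form $\beta_{t+1}\le\beta_t\bigl(1-\delta\,\beta_t^{\theta-1}\bigr)$ so as to invoke Lemma~\ref{recur} with $p=\theta-1$. When $\theta>1$, Lemma~\ref{recur} gives $\beta_t\le\bigl(\beta_0^{-(\theta-1)}+(\theta-1)\delta t\bigr)^{-1/(\theta-1)}$, and bounding the bracket below by $(\theta-1)\delta t$ (respectively by $\beta_0^{-(\theta-1)}$) yields $\di^2(x^{ts},F)\le\max\{[(\theta-1)\delta]^{-1/(\theta-1)}t^{-1/(\theta-1)},\di^2(x^0,F)\}$. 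When $\theta=1$ the recurrence is simply $\beta_{t+1}\le(1-\delta)\beta_t$, forcing $\delta\in(0,1]$ (else $\beta_t$ could be negative), and giving geometric decay $\di^2(x^{ts},F)\le(1-\delta)^t\di^2(x^0,F)$.

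To pass from the subsequence back to the full index, I would fix arbitrary $t$ and write $t=ms+q$ with $0\le q<s$; since $\{\di(x^t,F)\}$ is nonincreasing, $\di(x^t,F)\le\di(x^{ms},F)$, and $m\ge (t-s)/s\ge t/(2s)$ for $t\ge s$, so the subsequence bound transfers to the full sequence at the cost of replacing $t$ by $t/(2s)$ inside the estimate. This is exactly where the factors $(2s)^{1/(2(\theta-1))}$ and the fourth-root $\sqrt[4s]{1-\delta}$ in \eqref{eq:explicit constants} originate. Finally, Fact~\ref{FactPr:2} gives $\|x^t-\bar x\|\le2\,\di(x^t,F)$, which combined with the square-root of the distance estimates produces the stated bounds $M_1 t^{-1/(2(\theta-1))}$ and $M_2 r^t$ with the explicit constants; the small-$t$ regime $t<s$ is absorbed by the $\di(x^0,F)$ terms in $M_1,M_2$.

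The main obstacle I anticipate is purely bookkeeping rather than conceptual: carefully tracking the constants through the index substitution $t\mapsto t/(2s)$ and the conversion between $\di^2$, $\di$, and $\|x^t-\bar x\|$ so that they match \eqref{eq:explicit constants} exactly, while simultaneously handling the initial segment $t<s$ and the two edge cases $\delta=1$ (where $r=0$) and the degenerate bracket in Lemma~\ref{recur} under the convention $\tfrac{1}{0}=+\infty$. The application of Lemma~\ref{recur} is the crux, and the verification $\delta\in(0,1]$ when $\theta=1$ follows simply from requiring $\beta_{t+1}\ge0$ in the one-step inequality.
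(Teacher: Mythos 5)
Your proposal matches the paper's proof essentially step for step: the substitution $\beta_t=\di^2(x^{ts},F)$, the rewriting as $\beta_{t+1}\le\beta_t\left(1-\delta\beta_t^{\theta-1}\right)$ feeding into Lemma~\ref{recur}, the case split on $\theta$ (with $\delta\in(0,1]$ forced when $\theta=1$), the transfer from the subsequence to all indices via Fej\'er monotonicity and $\lfloor t/s\rfloor\ge t/s-1\ge t/(2s)$, and the final factor of $2$ from Fact~\ref{FactPr:2}. The only correction is in the bookkeeping you yourself flagged: the inequality $(t-s)/s\ge t/(2s)$ requires $t\ge 2s$ rather than $t\ge s$, so the initial segment absorbed by the $\di(x^0,F)$ terms is $t\le 2s$, exactly as the constants $(2s)^{\frac{1}{2(\theta-1)}}$ and $\left(\sqrt[4s]{1-\delta}\right)^{-2s}$ in \eqref{eq:explicit constants} anticipate.
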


\begin{proof} Without loss of generality, we assume that $x^{0} \notin F$.
  Let $\beta_t:=\di^2 (x^{ts},F)$ and $p:=\theta-1\geq 0$. Then \eqref{eq:dist recurrence} becomes
\begin{equation}\label{eq:dist recurrence 2}
\beta_{t+1} \le \beta_{t}\bigg(1- \delta \beta_t^{p}\bigg)
\end{equation}
We now distinguish two cases based on the value of $\theta$.

 {\em Case~1:} Suppose $\theta\in (1,+\infty)$. Then, noting that $1/(\theta-1)>0$, Lemma~\ref{recur} implies
\[
\beta_{t}\le\bigg(\beta_0^{-p}+\displaystyle (\theta-1) \delta t \bigg)^{-\frac{1}{\theta-1}}\le \bigg(0+\displaystyle (\theta-1) \delta t \bigg)^{-\frac{1}{\theta-1}}  \;\mbox{ for all }\;t\in\mathbb N.
\]
It follows that
$\di (x^{ts},F)=\sqrt{\beta_t} \le \left[(\theta-1) \delta\right]^{-\frac{1}{2(\theta-1)}} t^{-\frac{1}{2(\theta-1)}}$.
In particular, we have $\|x^{ts}-{\rm P}_{F} (x^{ts})\|=\di (x^{ts},F) \rightarrow 0$. By Fact~\ref{FactPr:3}, ${\rm P}_{F} (x^{ts}) \rightarrow \bar x$ for some $\bar x \in F$
and hence $x^{ts} \rightarrow \bar x \in F$ as $t \rightarrow \infty$. Denote $$\bar M_1:=\max\{(2s)^{\frac{1}{2(\theta-1)}}\,\left[(\theta-1) \delta\right]^{-\frac{1}{2(\theta-1)}},(2s)^{\frac{1}{2(\theta-1)}}\, \di(x^{0},F)\}.$$
On one hand, if $t \le 2s$, then
\[
\di (x^{t},F) \le \di(x^{0},F) =  \big[(2s)^{\frac{1}{2(\theta-1)}}\, \di(x^{0},F)\big] (2s)^{-\frac{1}{2(\theta-1)}} \le \bar M_1 t^{-\frac{1}{2(\theta-1)}},
\]
and, on the other hand, if $t>2s$ (and so, $\frac{t}{s}-1 \ge \frac{t}{2s})$, then
\begin{eqnarray*}
\di (x^{t},F) \le \di (x^{s \lfloor \frac{t}{s} \rfloor},F) &\le&  \left[(\theta-1) \delta\right]^{-\frac{1}{2(\theta-1)}} \big(\lfloor \frac{t}{s} \rfloor\big)^{-\frac{1}{2(\theta-1)}} \\
&\le &  \left[(\theta-1) \delta\right]^{-\frac{1}{2(\theta-1)}} \big( \frac{t}{s}-1\big)^{-\frac{1}{2(\theta-1)}} \\
&\le &   \left[(\theta-1) \delta\right]^{-\frac{1}{2(\theta-1)}} \big( \frac{t}{2s}\big)^{-\frac{1}{2(\theta-1)}} \\
& \le & \bar M_1 \,  t^{-\frac{1}{2(\theta-1)}}.
\end{eqnarray*}
Here $\lfloor \frac{t}{s} \rfloor$ denotes the largest integer which is smaller or equal to $\frac{t}{s}$, the first inequality follows from
the Fej\'er monotonicity of $\{x^t\}$ and the last inequality follows from the definition of $\bar M_1$.
This, together with Fact~\ref{FactPr:2}, implies that
\[
\|x^t-\bar x\|  \le 2\di (x^{t},F) \le 2 \bar M_1 t^{-\frac{1}{2(\theta-1)}}=M_1 t^{-\frac{1}{2(\theta-1)}},
\]
where the last equality follows from the definition of $M_1$.

 {\em Case~2:} Suppose $\theta=1$. Then \eqref{eq:dist recurrence 2} simplifies to
     $\beta_{t+1} \le (1-\delta) \beta_t\text{ for all }t\in\NN.$
Moreover, this shows that $\delta \in (0,1]$ and
 that
\[
\di (x^{ts},F)=\sqrt{\beta_t} \le \sqrt{\beta_0} \left(\sqrt{1-\delta}\right)^{t} .
\]
Let $\bar M_2=\max\{\left(\sqrt[4s]{1-\delta}\right)^{-2s} \di (x^{0},F),\sqrt{\di (x^{0},F)}\}$. 
On one hand, if $t \le 2s$, then
\[
\di (x^{t},F) \le \di(x^{0},F) =\big[\left(\sqrt[4s]{1-\delta}\right)^{-2s} \di (x^{0},F)\big] \left(\sqrt[4s]{1-\delta}\right)^{2s} \le \bar M_2  \left(\sqrt[4s]{1-\delta}\right)^{t},
\]
and, on the other hand, if $t>2s$ (and so, $\frac{t}{s}-1 \ge \frac{t}{2s})$, then
\begin{eqnarray*}
\di (x^{t},F) \le \di (x^{s \lfloor \frac{t}{s} \rfloor},F) &\le&  \sqrt{\beta_0} \left(\sqrt{1-\delta}\right)^{\lfloor \frac{t}{s} \rfloor} \\
&\le &  \sqrt{\beta_0} \left(\sqrt{1-\delta}\right)^{\frac{t}{s}-1} \\
&\le &   \sqrt{\beta_0} \left(\sqrt{1-\delta}\right)^{\frac{t}{2s}} \\
& \le & \sqrt{\beta_0}  \left(\sqrt[4s]{1-\delta}\right)^{t}  \le \bar M_2 \left(\sqrt[4s]{1-\delta}\right)^{t}.
\end{eqnarray*}
%
%
By the same argument as used in Case~1, for some $\bar x\in F$, we see that
\[
\|x^t-\bar x\|  \le 2\di (x^{t},F) \le 2\bar M_2 \left(\sqrt[4s]{1-\delta}\right)^{t}=M_2 \left(\sqrt[4s]{1-\delta}\right)^{t}.
\]
The conclusion follows by setting $r=\sqrt[4s]{1-\delta} \in [0,1).$
\end{proof}

We are now in a position to state our main convergence result, which we simultaneously prove for both variants of our H\"older regularity assumption (non-uniform and uniform versions).

\begin{theorem}[Rate of convergence of the quasi-cyclic algorithm]\label{thm:abstract convergence rate}
Let  $J$ be a finite set and let $\{T_j\}_{j \in J}$ be a finite family of
$\alpha$-averaged operators on a Hilbert space $H$ with $\cap_{j \in J}{\rm Fix\, }T_j \neq \emptyset$ and $\alpha\in (0,1)$. For each $t \in \mathbb{N}$, let $w_{j,t} \in \mathbb{R}$, $j \in J$, be such that $w_{j,t}\ge 0$ and $\sum_{j \in J}w_{j,t}=1$.
Let $x^0 \in H$ and consider the
 quasi-cyclic algorithm generated by~\eqref{eq:quasi_cyclic}.
Suppose the following assumptions hold:
 \begin{enumerate}[(a)]
  \setlength{\itemsep}{0pt}
  \setlength{\parskip}{0pt}
  \item For each $j\in J$, the operator $T_j$ is bounded H\"older regular .
  \item $\{{\rm Fix\ }T_j\}_{j \in J}$ has a boundedly H\"{o}lder regular intersection.
  \item $\sigma:=\displaystyle \inf_{t \in \mathbb{N}} \inf_{j \in J_+(t)}\{w_{j,t}\}>0$ where $J_+(t):=\{j \in J: w_{j,t}>0\}$ for each $t \in \mathbb{N}$, and there exists an $s\in\mathbb{N}$ such that
   $$ J_+(t)\cup J_+(t+1)\cup\dots\cup J_{+}(t+s-1)=J,\quad\forall t\in\mathbb{N}. $$
\end{enumerate}
Then $x^t \to \bar x\in  \cap_{j \in J}{\rm Fix\, }T_j \neq \emptyset$ at least with a sublinear rate $O(t^{-\rho})$ for some $\rho>0$.

In particular, if we assume  ${\rm (a')}$, ${\rm (b')}$ and ${\rm (c)}$ hold where ${\rm (a')}$, ${\rm (b')}$ are given by
\begin{enumerate}[(a$'$)]
  \setlength{\itemsep}{0pt}
  \setlength{\parskip}{0pt}
   \item for each $j\in J$, the operator $T_j$ is bounded H\"older regular with uniform exponent $\gamma_{1,j}\in(0,1]$;
   \item $\{{\rm Fix\ }T_j\}_{j \in J}$ has a bounded H\"{o}lder regular intersection with  uniform exponent $\gamma_2\in(0,1]$,
\end{enumerate}{
then there exist constants $M_1,M_2\geq 0$ and $r\in[0,1)$ such that
\[
\|x^{t}-\bar x\| \le \begin{cases}
                       M_1 t^{-\frac{\gamma}{2(1-\gamma)}}, & \gamma \in (0,1), \\
                       M_2 \, r^t,  & \gamma=1,
                     \end{cases}
\]
where $\gamma:=\gamma_1 \gamma_2$ and $\gamma_1:=\min\{\gamma_{1,j}\mid j\in J\}$.}
\end{theorem}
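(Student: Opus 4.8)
The plan is to verify the hypotheses of Proposition~\ref{prop:convergence rate} for the set $F := \cap_{j\in J}{\rm Fix\,}T_j$, and then simply quote the rate it delivers. Lemma~\ref{lem:asym} already supplies everything we need about the sequence itself: $\{x^t\}$ is Fej\'er monotone with respect to $F$ (hence bounded and contained in a fixed bounded set $K$), the residuals $\max_{j\in J_+(t)}\|x^t-T_j(x^t)\|$ tend to $0$, and, reading off the inner estimate in its proof (Fact~\ref{fact:averaged} applied at any $y\in F$ together with $w_{j,t}\ge\sigma$ on $J_+(t)$), we obtain the one-step descent inequality $\|x^{t+1}-y\|^2\le\|x^t-y\|^2-c_0\max_{j\in J_+(t)}\|x^t-T_j(x^t)\|^2$ for all $y\in F$, with $c_0:=\frac{1-\alpha}{\alpha}\sigma>0$. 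The whole argument reduces to producing, from this, a block recurrence of the form \eqref{eq:dist recurrence} over windows of length $s$.

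\emph{Block descent.} Fix $t$, sum the one-step inequality over $r=0,\dots,s-1$ with the \emph{fixed} anchor $y={\rm P}_F(x^{ts})$, and use $\|x^{(t+1)s}-y\|\ge\di(x^{(t+1)s},F)$ and $\|x^{ts}-y\|=\di(x^{ts},F)$. This yields
$$\di^2(x^{(t+1)s},F)\le\di^2(x^{ts},F)-c_0\sum_{r=0}^{s-1}\max_{j\in J_+(ts+r)}\|x^{ts+r}-T_j(x^{ts+r})\|^2.$$
Writing $E:=\max_{0\le r<s}\max_{j\in J_+(ts+r)}\|x^{ts+r}-T_j(x^{ts+r})\|^2$, the sum is at least $E$, so it remains to bound $\di^{2\theta}(x^{ts},F)$ above by a constant times $E$, for an appropriate $\theta\ge1$.

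\emph{From $E$ back to the starting distance.} This is the crux, and the role of assumption~(c). For each $j\in J$ choose $r_j\in\{0,\dots,s-1\}$ with $j\in J_+(ts+r_j)$. Since $\|x^{ts+r_j}-T_j(x^{ts+r_j})\|^2\le E$, bounded H\"older regularity of $T_j$ (assumption~(a$'$)) gives $\di(x^{ts+r_j},{\rm Fix\,}T_j)\le\mu_j E^{\gamma_{1,j}/2}$; meanwhile the displacement from the block start is controlled by the same residuals, $\|x^{ts}-x^{ts+r_j}\|\le\sum_{i=0}^{s-1}\max_{j\in J_+(ts+i)}\|x^{ts+i}-T_j(x^{ts+i})\|\le s\sqrt{E}$. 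A triangle inequality then bounds $\di(x^{ts},{\rm Fix\,}T_j)\le s\sqrt{E}+\mu_j E^{\gamma_{1,j}/2}$. Because $E$ stays bounded on $K$, the term $\sqrt{E}=E^{1/2}$ is absorbed into $E^{\gamma_1/2}$ (with $\gamma_1:=\min_j\gamma_{1,j}$) at the cost of a constant, so $\max_{j\in J}\di(x^{ts},{\rm Fix\,}T_j)\le C\,E^{\gamma_1/2}$. Feeding this into the bounded H\"older regular intersection property (assumption~(b$'$)) yields $\di(x^{ts},F)\le\beta\,(C E^{\gamma_1/2})^{\gamma_2}=C'E^{\gamma/2}$ with $\gamma:=\gamma_1\gamma_2$, i.e. $E\ge(C')^{-2/\gamma}\di^{2/\gamma}(x^{ts},F)$.

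\emph{Conclusion.} Combining the last estimate with the block descent gives exactly \eqref{eq:dist recurrence} with $\theta=1/\gamma\ge1$ and $\delta=c_0(C')^{-2/\gamma}>0$, for all $t$. Proposition~\ref{prop:convergence rate} then delivers convergence of $x^t$ to some $\bar x\in F$ together with the stated bounds, since $1/(2(\theta-1))=\gamma/(2(1-\gamma))$ when $\gamma<1$, while $\theta=1$ gives the linear rate when $\gamma=1$. For the non-uniform statement under (a),(b), one first notes that $\{x^t\}$ lies in a single bounded set $K$, so a single pair of exponents $(\gamma_{1,j},\gamma_2)$, depending only on $K$, can be used throughout; the identical computation then produces \eqref{eq:dist recurrence} for some $\gamma\in(0,1]$, hence at least the sublinear rate $O(t^{-\gamma/(2(1-\gamma))})$. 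The main obstacle is precisely the middle step: bounded H\"older regularity of $T_j$ only controls the distance to ${\rm Fix\,}T_j$ at iterates where $j$ is active, which need not include the block start $x^{ts}$, so one must transport the estimate across up to $s$ iterates while checking that the transport error and the mixed powers $E^{1/2}$, $E^{\gamma_{1,j}/2}$ all collapse to the single exponent $\gamma_1\gamma_2$.
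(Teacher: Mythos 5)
Your proposal is correct and follows essentially the same route as the paper: Lemma~\ref{lem:asym} for Fej\'er monotonicity and vanishing residuals, assumption~(c) to locate an active index in each length-$s$ block, a triangle inequality to transport the H\"older estimate for ${\rm Fix\,}T_j$ back to the block start, the averaged-operator descent inequality to relate residuals to the telescoped decrease of $\di^2(\cdot,F)$, and finally Proposition~\ref{prop:convergence rate} with $\theta=1/(\gamma_1\gamma_2)$. The only differences are organizational (you funnel everything through the block maximum $E$ and absorb the mixed powers $E^{1/2}$, $E^{\gamma_{1,j}/2}$ into $E^{\gamma_1/2}$ using boundedness, whereas the paper normalizes the residuals to be at most $1$ and squares the triangle inequality via convexity of $(\cdot)^2$), and these lead to the same recurrence and the same rates.
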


\begin{proof}
 Denote $F:= \cap_{j \in J}{\rm Fix\, }T_j$. We first consider the case in which Assumptions~(a), (b) and (c) hold. We first observe that, as a consequence of Lemma~\ref{lem:asym}, we may assume without loss of generality the following two inequalities holds:
  \begin{align}
   \max_{j\in J_+(t)}\|x^t-T_j(x^t)\| &\leq 1,\quad\forall t\in\NN, \label{eq:less than 1} \\
    \di^2(x^{ts},F)- \di^2(x^{(t+1)s},F) &\leq 1,\quad\forall t\in\NN.    \label{eq:less than 1 b}
  \end{align}

 Now, let $K$ be a bounded set such that $\{x^t \mid t\in\NN\}\subseteq K$. For each $j\in J$, since the operator $T_j$ is bounded H\"{o}lder regular, there exist exponents $\gamma_{1,j}>0$ and scalars $\mu_j>0$ such that
\begin{equation}\label{eq:use00}
\di(x,{\rm Fix\,} T_j) \le \mu_j \|x-T_j(x)\|^{\gamma_{1,j}}\quad\forall x \in K.
\end{equation}
Set $\gamma_1=\min\{\gamma_{1,j}\mid j\in J\}$ and $\mu:=\max\{\mu_j \mid j \in J\}$. By \eqref{eq:use00} and \eqref{eq:less than 1}, for all $j\in J_+(t)$, it follows that
 \begin{equation}\label{eq:use01}
\di(x^t,{\rm Fix\,} T_j) \le \mu_j \|x^t-T_j(x^t)\|^{\gamma_{1,j}} \le \mu \|x^t-T_j(x^t)\|^{\gamma_{1}}\quad\forall x \in K.
\end{equation}
Also, since $\{{\rm Fix\ }T_j\}_{j \in J}$ has a boundedly H\"{o}lder regular intersection, there exist an exponent $\gamma_2>0$ and a scalar $\beta>0$ such that
\begin{equation}\label{eq:use02}
  \di(x,F) \leq \beta \left(\max_{j\in J}\di(x,{\rm Fix\,}T_j)\right)^{\gamma_2}\quad\forall x\in K.
\end{equation}

 Fix an arbitrary index $j'\in J$. Assumption~(c) ensures that, for any $t\in\mathbb{N}$, there exists index $t_k\in\{ts,\dots,(t+1)s-1\}$ such that $j'\in J_+(t_k)$. Then
 \begin{equation}\label{eq:upp 00}\begin{split}
  \di^2(x^{ts},{\rm Fix\,} T_{j'})
   &\leq \left(\di(x^{t_k},{\rm Fix\,} T_{j'})+\|x^{ts}-x^{t_k}\|\right)^2 \\
   &\leq\left(\di(x^{t_k},{\rm Fix\,} T_{j'})+\sum_{n=ts}^{t_k-1}\|x^n-x^{n+1}\|\right)^2 \\
   &\leq (t_k-ts+1)\left(\di^2(x^{t_k},{\rm Fix\,} T_{j'})+\sum_{n=ts}^{t_k-1}\|x^n-x^{n+1}\|^2\right) \\
   &\leq s\left(\mu\left(\|x^{t_k}-T_{j'}(x^{t_k})\|^{2}\right)^{\gamma_1}+\sum_{n=ts}^{(t+1)s-1}\|x^n-x^{n+1}\|^2\right),
 \end{split}\end{equation}
 where the second from last inequality follows from convexity of the function $(\cdot)^2$, and the last uses \eqref{eq:use01} noting that $j'\in J_+(t_k)$.

  Since each $T_j$ is $\alpha$-averaged, for all $t\in\NN$, the convex combination $\sum_{j\in J}w_{t,j}T_j$ is $\alpha$-averaged (and, in particular, nonexpansive), and hence, for all $x\in H$ and $y \in F$, we have
\begin{eqnarray}\label{eq:089}
\|\sum_{j \in J}w_{j,t}T_j(x)-y\|^2 &=&  \|\sum_{j \in J}w_{j,t}\big(T_j(x)-y\big)\|^2 \nonumber \\
& \le  & \sum_{j \in J}w_{j,t}\|T_j(x)-y\|^2 \nonumber \\
&\le & \sum_{j \in J}w_{j,t}\big(\|x-y\|^2-\frac{1-\alpha}{\alpha}\|x-T_j(x)\|^2\big) \nonumber \\
& = & \|x-y\|^2- \frac{1-\alpha}{\alpha}\sum_{j \in J}w_{j,t}\|x-T_j(x)\|^2 \\
& \le & \|x-y\|^2-  \sigma\left(\frac{1-\alpha}{\alpha}\right)\|x-T_{j}(x)\|^2 \quad\forall j\in J_+(t). \nonumber
\end{eqnarray}
We therefore have that
   \begin{equation}\label{eq:upp 01}\begin{split}
   \sigma\,\frac{1-\alpha}{\alpha}\,\|x^{t_k}-T_{j'}(x^{t_k})\|^{2}
   &\leq \|x^{t_k}-P_F(x^{ts})\|^2-\|x^{t_k+1}-P_F(x^{ts})\|^2 \\
   &\leq \|x^{ts}-P_F(x^{ts})\|^2-\|x^{(t+1)s}-P_F(x^{ts})\|^2 \\
   &\leq \di^2(x^{ts},F)- \di^2(x^{(t+1)s},F).
  \end{split}\end{equation}
Furthermore, for each $n\in\{ts,\ldots,(t+1)s-1\}$, applying $x=x^n$ and $y=P_F(x^{ts})$ in \eqref{eq:089}
we have
\begin{eqnarray*}
\frac{1-\alpha}{\alpha} \|x^n-x^{n+1}\|^2 &=& \frac{1-\alpha}{\alpha} \|x^n-\sum_{j \in J}w_{j,n}T_j(x^n)\|^2 \\ &\le & \frac{1-\alpha}{\alpha} \sum_{j \in J}w_{j,n} \|x^n-T_j(x^n)\|^2 \\
&\le &\|x^n-P_F(x^{ts})\|^2-\|x^{n+1}-P_F(x^{ts})\|^2,
\end{eqnarray*}
and thus it follows that
  \begin{equation}\label{eq:upp 02}\begin{split}
    \frac{1-\alpha}{\alpha}\sum_{n=ts}^{(t+1)s-1}\|x^n-x^{n+1}\|^2 
      &\leq \sum_{n=ts}^{(t+1)s-1}\left(\|x^n-P_F(x^{ts})\|^2-\|x^{n+1}-P_F(x^{ts})\|^2\right)  \\
      &= \|x^{ts}-P_F(x^{ts})\|^2-\|x^{(t+1)s}-P_F(x^{ts})\|^2 \\
      &\leq \di^2(x^{ts},F)- \di^2(x^{(t+1)s},F) \\
      &\leq \left( \di^2(x^{ts},F)- \di^2(x^{(t+1)s},F) \right)^{\gamma_1},
  \end{split}\end{equation}
  where the last inequality follows from \eqref{eq:less than 1 b}. 
 Altogether, combining \eqref{eq:upp 00}, \eqref{eq:upp 01} and \eqref{eq:upp 02} gives
  \begin{equation*}\begin{split}
  \di^2(x^{ts},{\rm Fix\,} T_{j'})
   &\leq s\left(\mu\left(\frac{\alpha}{\sigma(1-\alpha)}\right)^{\gamma_1}+\frac{\alpha}{1-\alpha}\right)\left(\di^2(x^{ts},F)- \di^2(x^{(t+1)s},F)\right)^{\gamma_1}.
  \end{split}\end{equation*}
 Since $j'\in J$ was chosen arbitrary, using \eqref{eq:use02} we obtain
  \begin{equation}\label{eq:use04}\begin{split}
   \di^{2}(x^{ts},F)
   &\leq \beta^2 \max_{j\in J}\di^{2\gamma_2}(x^{ts},{\rm Fix\,}T_j)  \\
   &\leq \delta^{-1}\left(\di^2(x^{ts},F)- \di^2(x^{(t+1)s},F)\right)^{1/\theta},
 \end{split}\end{equation}
 where the constant $\delta>0$ and $\gamma>0$ are given by
  $$\delta:= \left(s^{\gamma_2}\beta^2\left(\mu\left(\frac{\alpha}{\sigma(1-\alpha)}\right)^{\gamma_1}+\frac{\alpha}{1-\alpha}\right)^{\gamma_2}\right)^{-1}\qquad \theta:=\frac{1}{\gamma_1\gamma_2}.$$
Rearranging \eqref{eq:use04} gives
 $$\di^2(x^{(t+1)s},F)\leq \di^2(x^{{ts}},F) - \delta\,\di^{2\theta}(x^{{ts}},F),$$
Then, the first assertion follows from Proposition~\ref{prop:convergence rate}.

 To establish the second assertion, we suppose that the assumptions (a$'$), (b$'$) and (c) hold. Proceed with the same proof as above, and noting that the exponents $\gamma_{1j}$ and $\gamma_2$
are now independent of the choice of $K$, we see that  the second assertion also follows.
\end{proof}

{
\begin{remark}
 A closer look at the proof of Theorem~\ref{thm:abstract convergence rate} reveals that a quantification of the constants $M_1,M_2$ and $r$ is possible using the various regularity constants/exponents and \eqref{eq:explicit constants}. More precisely, \eqref{eq:explicit constants} holds with
  $$\delta:= \left(s^{\gamma_2}\beta^2\left(\mu\left(\frac{\alpha}{\sigma(1-\alpha)}\right)^{\gamma_1}+\frac{\alpha}{1-\alpha}\right)^{\gamma_2}\right)^{-1}\qquad \theta:=\frac{1}{\gamma_1\gamma_2},\quad F:=\bigcap_{j \in J}{\rm Fix\, }T_j.$$
 Here $\mu$ is the max of the constants of bounded H\"older regularity of the individual operators $T_j$ and $\beta$ is the constant of bounded H\"older regularity of the collection $\{{\rm Fix\ }T_j\}_{j\in J}$, respectively, on an appropriate compact set. Consequently, these expressions, appropriately specialized, also hold for all the subsequent corollaries of Theorem~\ref{thm:abstract convergence rate}.
\end{remark}
}

\begin{remark}
 Theorem~\ref{thm:abstract convergence rate} generalizes \cite[Th.~6.1]{BNP} which considers the special case in which the H\"older exponents are independent of the bounded set $K$ and are specified by $\gamma_{1j}=\gamma_2=1$, $j=1,\ldots,m$.
\end{remark}

 A slight refinement of Theorem~\ref{thm:abstract convergence rate} which allows for extrapolations as well as different averaging constants is possible. More precisely, an \emph{extrapolation}  of the operator $T$ (in the sense of \cite{BCK2006}) is a (non-convex) combination of the form $wT+(1-w)I$ where the weight $w$ may take values larger than~$1$.
Recall that for a finite set $\Omega$, we use $|\Omega|$ to denote the number of elements of $\Omega$.

\begin{corollary}[Extrapolated quasi-cyclic algorithm]\label{cor:refined qca}
Let $J:=\{1,2,\dots,m\}$, let $\{T_j\}_{j \in J}$ be a finite family of
$\alpha_j$-averaged operators on a Hilbert space $H$ with $\cap_{j \in J}{\rm Fix\, }T_j \neq \emptyset$ and $\alpha_j\in (0,1)$, and let $\alpha > 0$
be such that $\max_{j\in J}\alpha_j \leq \alpha$.
 For each $t \in \mathbb{N}$, let $w_{0,t}\in\mathbb{R}$ and $w_{j,t}
\in [0,\frac{\alpha}{\alpha_j}]$ $j \in J$, be such that
 $$\sum_{j \in J}w_{j,t}+w_{0,t}=1\text{~and~}{ \sup_{t \in \NN}\{\sum_{j\in J}\alpha_jw_{j,t}\}<\alpha}.$$
Let $x^0 \in H$ and consider the extrapolated quasi-cyclic algorithm generated by
\begin{equation}\label{eq:refined qca}
 x^{t+1} = \sum_{j \in J}w_{j,t}T_j\big(x^t\big) + w_{0,t}x^t\quad\forall t \in \mathbb{N}.
\end{equation} Assume the following hypotheses.
 \begin{enumerate}[(a)]
  \setlength{\itemsep}{0pt}
  \setlength{\parskip}{0pt}
  \item For each $j\in J$, the operator $T_j$ is bounded H\"older regular.
  \item $\{{\rm Fix\ }T_j\}_{j \in J}$ has a boundedly H\"{o}lder regular intersection.
  \item $\sigma:=\displaystyle \inf_{t \in \mathbb{N}} \inf_{j \in J_+(t)}\{w_{j,t}\}>0$  where $J_+(t)=\{j \in J: w_{j,t}>0\}$ for each $t \in \mathbb{N}$, and there exists an $s\in\mathbb{N}$ such that
   $$ J_+(t)\cup J_+(t+1)\cup\dots\cup J_{+}(t+s-1)=J,\quad\forall t\in\mathbb{N}. $$
\end{enumerate}
Then $x^t \to \bar x\in  \cap_{j \in J}{\rm Fix\, }T_j \neq \emptyset$ at least with a sublinear rate $O(t^{-\rho})$ for some $\rho>0$.

In particular, if we assume  ${\rm (a')}$, ${\rm (b')}$ and ${\rm (c)}$ hold where ${\rm (a')}$, ${\rm (b')}$ are given by
\begin{enumerate}[(a$'$)]
  \setlength{\itemsep}{0pt}
  \setlength{\parskip}{0pt}
   \item for each $j\in J$, the operator $T_j$ is bounded H\"older regular with uniform exponent $\gamma_{1,j}\in(0,1]$;
   \item $\{{\rm Fix\ }T_j\}_{j \in J}$ has a bounded H\"{o}lder regular intersection with  uniform exponent $\gamma_2\in(0,1]$,
\end{enumerate}
then there exist constants $M_1,M_2\geq 0$ and $r\in[0,1)$ such that
\[
\|x^{t}-\bar x\| \le \begin{cases}
                       M_1 t^{-\frac{\gamma}{2(1-\gamma)}}, & \gamma \in (0,1), \\
                       M_2 \, r^t,  & \gamma=1,
                     \end{cases}
\]
where $\gamma:=\gamma_1 \gamma_2$ and $\gamma_1:=\min\{\gamma_{1,j}\mid j\in J\}$.
\end{corollary}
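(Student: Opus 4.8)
The plan is to reduce the corollary to Theorem~\ref{thm:abstract convergence rate} by rewriting the extrapolated scheme~\eqref{eq:refined qca} as an ordinary (non-extrapolated) quasi-cyclic iteration~\eqref{eq:quasi_cyclic} driven by a family of operators sharing a \emph{common} averaging constant. Since each $T_j$ is $\alpha_j$-averaged, write $T_j=(1-\alpha_j)I+\alpha_j R_j$ with $R_j$ nonexpansive (Definition~\ref{def:averaged}), and set $\tilde T_j:=(1-\alpha)I+\alpha R_j$, which is $\alpha$-averaged (here $\alpha\in(0,1)$) and satisfies ${\rm Fix\,}\tilde T_j={\rm Fix\,}R_j={\rm Fix\,}T_j$. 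Eliminating $R_j$ gives the identity
\begin{equation*}
 T_j=\Big(1-\tfrac{\alpha_j}{\alpha}\Big)I+\tfrac{\alpha_j}{\alpha}\tilde T_j ,
\end{equation*}
so (as $\alpha_j/\alpha\le1$) each $T_j$ is an under-relaxation of $\tilde T_j$. Substituting this into~\eqref{eq:refined qca} and collecting the terms proportional to $x^t$, the iteration becomes $x^{t+1}=\sum_{j\in J}\tilde w_{j,t}\tilde T_j(x^t)+\tilde w_{0,t}\,x^t$ with
\begin{equation*}
 \tilde w_{j,t}:=\tfrac{\alpha_j}{\alpha}\,w_{j,t}\ \ (j\in J),\qquad
 \tilde w_{0,t}:=1-\tfrac1\alpha\sum_{j\in J}\alpha_j w_{j,t}.
\end{equation*}
Adjoining the identity as an extra operator $T_0:=I$, which is $\alpha$-averaged with ${\rm Fix\,}I=H$, this is exactly a quasi-cyclic iteration over $J':=J\cup\{0\}$ for the family $\{\tilde T_j\}_{j\in J'}$, $\tilde T_0=I$.

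Next I would verify the hypotheses of Theorem~\ref{thm:abstract convergence rate} for the transformed data. The weights are nonnegative (as $w_{j,t}\ge0$, $\alpha_j,\alpha>0$) and a one-line computation gives $\sum_{j\in J}\tilde w_{j,t}+\tilde w_{0,t}=\sum_{j\in J}w_{j,t}+w_{0,t}=1$; being nonnegative and summing to one, they lie in $[0,1]$. The decisive point is the lower bound on the identity weight: $\inf_t\tilde w_{0,t}=1-\tfrac1\alpha\sup_t\sum_{j\in J}\alpha_j w_{j,t}>0$, which is strictly positive precisely because of the strict inequality $\sup_t\sum_{j\in J}\alpha_j w_{j,t}<\alpha$. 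For $j\in J$ one has $\tilde w_{j,t}>0\iff w_{j,t}>0$, and then $\tilde w_{j,t}\ge(\min_{i\in J}\alpha_i/\alpha)\,\sigma$, while $0\in\tilde J_+(t)$ for every $t$; hence the new positivity constant $\tilde\sigma:=\inf_t\inf_{j\in\tilde J_+(t)}\tilde w_{j,t}$ is strictly positive. Finally, since $J_+(t)\subseteq\tilde J_+(t)$ and $0\in\tilde J_+(t)$ for all $t$, the covering window of length $s$ from assumption~(c) yields $\bigcup_{i=0}^{s-1}\tilde J_+(t+i)=J\cup\{0\}=J'$, so assumption~(c) transfers with the same $s$.

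It then remains to transfer the regularity hypotheses. Because $x-T_j(x)=\tfrac{\alpha_j}{\alpha}\big(x-\tilde T_j(x)\big)$ and ${\rm Fix\,}\tilde T_j={\rm Fix\,}T_j$, bounded H\"older regularity of $T_j$ (Definition~\ref{def:holder regular operators}), with exponent $\gamma_{1,j}$ and constant $\mu_j$ on a bounded set, passes to $\tilde T_j$ with the \emph{same} exponent and the rescaled constant $\mu_j(\alpha_j/\alpha)^{\gamma_{1,j}}$; the adjoined $T_0=I$ is trivially bounded H\"older regular (with any exponent, since ${\rm Fix\,}I=H$), so it does not lower $\gamma_1=\min_{j\in J}\gamma_{1,j}$. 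Moreover $\bigcap_{j\in J'}{\rm Fix\,}\tilde T_j=\bigcap_{j\in J}{\rm Fix\,}T_j=:F$ and $\max_{j\in J'}\di(\,\cdot\,,{\rm Fix\,}\tilde T_j)=\max_{j\in J}\di(\,\cdot\,,{\rm Fix\,}T_j)$ because $\di(\,\cdot\,,H)\equiv0$, so assumption~(b) (respectively (b$'$) with exponent $\gamma_2$) is inherited verbatim (Definition~\ref{def:holder regular}). Thus all hypotheses of Theorem~\ref{thm:abstract convergence rate} hold for $\{\tilde T_j\}_{j\in J'}$ and the weights $\{\tilde w_{j,t}\}$, and applying that theorem yields $x^t\to\bar x\in F$ with the sublinear rate in general, and with the stated dichotomy (sublinear for $\gamma=\gamma_1\gamma_2\in(0,1)$, linear for $\gamma=1$) in the uniform case.

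The only genuine obstacle is bookkeeping rather than a new idea: one must absorb the extrapolation and the heterogeneous averaging constants cleanly into the single identity weight $\tilde w_{0,t}$ \emph{while keeping that weight bounded away from $0$}. This is exactly the role of the hypothesis $\sup_t\sum_{j\in J}\alpha_j w_{j,t}<\alpha$; a merely non-strict inequality would allow $\tilde w_{0,t}\to0$, destroying the constant $\tilde\sigma>0$ demanded by assumption~(c) of Theorem~\ref{thm:abstract convergence rate}. All the remaining convergence estimates are already supplied by that theorem, so no further computation is required.
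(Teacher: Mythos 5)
Your proposal is correct and is essentially the paper's own proof: your $\tilde T_j=(1-\alpha)I+\alpha R_j$ coincides with the paper's extrapolated operator $\overline T_j=\frac{\alpha}{\alpha_j}T_j-(\frac{\alpha}{\alpha_j}-1)I$, and the rescaled weights, the adjoined identity operator with its strictly positive weight coming from $\sup_t\sum_j\alpha_jw_{j,t}<\alpha$, and the transfer of the H\"older regularity hypotheses all match the paper's argument before invoking Theorem~\ref{thm:abstract convergence rate}.
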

\begin{proof}
  For each $j\in J$, by Definition~\ref{def:expansivity prop}\eqref{def:averaged}, the operator $\overline{T}_j$ is $\alpha$-averaged where
 $$ \overline{T}_j := \frac{\alpha}{\alpha_j} \, T_j -\left(\frac{\alpha}{\alpha_j}-1\right)I.$$ Let $\overline w_{j,t}:=\frac{w_{j,t}\alpha_j}{\alpha}$, $j \in J$.
 Then, $\overline w_{j,t}\ge 0$ and $$\sum_{j \in J}\overline w_{j,t}= \sum_{j \in J}\frac{w_{j,t}\alpha_j}{\alpha} {= \frac{1}{\alpha} \sum_{j \in J}w_{j,t}\alpha_j } < 1.$$
Let $\overline w_{0,t}:=1-\sum_{j \in J}\overline w_{j,t}$, {$\overline{T}_{0}(x):=x$} for all $x \in H$, and $\overline J= J \cup \{0\}$.
Then, for all $t \in \mathbb{N}$, $\sum_{j \in \overline{J}}\overline w_{j,t} =1$ with $w_{j,t} \ge 0$, $j \in \overline{J}$ and
\begin{eqnarray*}
 x^{t+1} &=&  \sum_{j \in J}w_{j,t}T_j\big(x^t\big) + w_{0,t}x^t \\
 & = & \sum_{j\in  J}\overline w_{j,t}\frac{\alpha}{\alpha_j}T_j(x^t) - \left(\sum_{j\in J}\overline w_{j,t}\frac{\alpha}{\alpha_j}-1\right)x^t \\
 & = & \sum_{j\in  J}\overline w_{j,t}\left[\frac{\alpha}{\alpha_j}T_j(x^t) - \left(\frac{\alpha}{\alpha_j} -1\right)x^t\right] +\left( 1-\sum_{j\in J}\overline{w}_{j,t}\right)x^t \\
 & = & \sum_{j\in \overline J}\overline w_{j,t}\overline T_j(x^t).
\end{eqnarray*}
Since ${\rm Fix\ } \overline{T}_j = {\rm Fix\ }  T_j$ for all $j\in J$, $\{{\rm Fix\ } \overline{T}_j\}_{j\in J}$ is bounded H\"older regular (with uniform exponent $\gamma_{2}$) if and only if $\{{\rm Fix\ } T_j\}_{j\in J}$ is bounded H\"older regular (with uniform exponent $\gamma_{2}$).
This together with ${\rm Fix\ }\overline{T}_{0}=H$ implies that $\{{\rm Fix\ }\overline{T}_j\}_{j \in \overline{J}}$ also has a boundedly H\"{o}lder regular intersection.
For all $j \in J$, $\|x-\overline{T}_jx\| = \frac{\alpha}{\alpha_j}\|T_jx-x\|$ and so, $\overline T_j$ is bounded H\"older regular (with uniform exponent $\gamma_{1,k}$) if and only if $T_j$ is bounded H\"older regular (with uniform exponent $\gamma_{1,k}$).
Clearly, $\overline T_{l+1}$ is bounded H\"older regular with a uniform exponent $1$. Thus,
Assumption~(a) and (b) of Theorem~\ref{thm:abstract convergence rate} hold for $\{\overline w_{j,t}\}_{j \in \overline{J}}$.
Moreover, noting that by assumption $\sup_{t \in \NN}\{\sum_{j \in J} w_{j,t}\alpha_j\}<\alpha$, we have
\[
\inf_{t\in \NN} \overline w_{0,t}= \inf_{t\in \NN}\{1-\sum_{j \in J}\overline w_{j,t}\}= \inf_{t\in \NN}\{1-  \sum_{j \in J} \frac{w_{j,t}\alpha_j}{\alpha}\}=\frac{\alpha-\sup_{t \in \NN}\{\sum_{j \in J} w_{j,t}\alpha_j\} }{\alpha}>0.
\]
This together with Assumption~(c) of this corollary implies that Assumption~(c) of Theorem~\ref{thm:abstract convergence rate} holds for $\{\overline w_{j,t}\}_{j \in \overline{J}}$.
 Therefore,the claimed result now follows from Theorem~\ref{thm:abstract convergence rate}.
\end{proof}

\begin{remark}[Common fixed points]\label{re:common fixed points}
 Throughout this work we assume the collection of operators $\{T_j\}_{j\in J}$ ($J$ a finite index set) to have a common fixed point. In this setting, with appropriate nonexpansivity properties, one has that the fixed point set of convex combinations or compositions of the operators $\{T_j\}_{j\in J}$ is precisely the set of their common fixed points. Whilst there do exist several instance where such an assumption does not hold ({\em e.g.,} regularization schemes such as \cite{Luke3}), this does not preclude their analysis using the theory presented here (see Proposition~\ref{prop:RAAR}). Indeed, for such cases, the fixed point set of an appropriate convex combination or composition of operators is non-empty, and this aggregated operator thus amenable to our results (rather than the individual operators themselves). The question of usefully characterizing the fixed point set of this aggregated operator must then be addressed; a matter significantly more subtle in the absence of a common fixed point.
\end{remark}

 We next provide four important specializations of Theorem~\ref{thm:abstract convergence rate}. The first result is concerned with a simple fixed point iteration, the second with a Kransnoselskii--Mann scheme, the third with the method of cyclic projections, and the fourth with forward-backward splitting for variational inequalities.

\begin{corollary}[Averaged fixed point iterations with H\"older regularity] \label{cor:1}
 Let $T$ be an $\alpha$-averaged operators on a Hilbert space $H$ with ${\rm Fix}\,T\neq\emptyset$ and $\alpha\in (0,1)$. Suppose $T$ is bounded H\"older regular. Let $x^0 \in H$ and set $x^{t+1}=Tx^t$. Then  $x^t \to \bar x \in {\rm Fix}\,T \neq \emptyset$
  at least with a sublinear rate $O(t^{-\rho})$ for some $\rho>0$. In particular, if $T$ is bounded H\"older regular with uniform exponent $\gamma\in(0,1]$
  then exist $M>0$ and $r\in[0,1)$ such that
\[
\|x^{t}-\bar x\| \le \begin{cases}
                       M t^{-\frac{\gamma}{2(1-\gamma)}}, & \gamma \in (0,1), \\
                       M \, r^t,  & \gamma=1.
                     \end{cases}
\]
\end{corollary}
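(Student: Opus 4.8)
The plan is to apply Theorem~\ref{thm:abstract convergence rate} to the degenerate case of a single operator. I would set $J := \{1\}$, $T_1 := T$, and take the constant weights $w_{1,t} := 1$ for all $t\in\NN$. Then the quasi-cyclic recursion \eqref{eq:quasi_cyclic} reduces to $x^{t+1}=T(x^t)$, which is exactly the iteration under consideration, and the common fixed point set becomes $F:=\cap_{j\in J}{\rm Fix\,}T_j={\rm Fix\,}T$.

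Next I would verify the three hypotheses of Theorem~\ref{thm:abstract convergence rate}. Assumption~(a) holds by hypothesis, since $T$ is assumed bounded H\"older regular. Assumption~(b) holds trivially: as observed immediately after Definition~\ref{def:holder regular}, any collection consisting of a single set has a bounded H\"older regular intersection, and in fact with uniform exponent $\gamma_2=1$. For Assumption~(c), since $J_+(t)=\{1\}$ for every $t$, we have $\sigma=1>0$, and the covering condition is satisfied with $s=1$ because $J_+(t)=J$ for all $t$.

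With the hypotheses in place, the first assertion of Theorem~\ref{thm:abstract convergence rate} immediately yields $x^t\to\bar x\in{\rm Fix\,}T$ at a sublinear rate $O(t^{-\rho})$ for some $\rho>0$. For the refined estimate, I would track the exponents carefully: if $T$ is bounded H\"older regular with uniform exponent $\gamma\in(0,1]$, then in the notation of the theorem $\gamma_1=\gamma_{1,1}=\gamma$ while $\gamma_2=1$ from the single-set collection, so the combined exponent $\gamma_1\gamma_2$ equals $\gamma$. Substituting this value into the dichotomy provided by the theorem reproduces precisely the stated rate, and the single constant may be taken as $M:=\max\{M_1,M_2\}$ (only one branch is active in each case).

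I do not anticipate any genuine obstacle here; the entire content lies in recognizing that the single-operator iteration fits the structural requirements of the general theorem, with the trivial single-set intersection supplying the uniform exponent $\gamma_2=1$ so that the final H\"older exponent collapses to $\gamma$ itself.
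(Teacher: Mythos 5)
Your proposal is correct and follows exactly the route the paper takes: the paper's proof is the one-line observation that the result follows immediately from Theorem~\ref{thm:abstract convergence rate}, and your verification of hypotheses (a)--(c) with $J=\{1\}$, constant weight $1$, $s=1$, and $\gamma_2=1$ for the singleton collection is just the explicit unpacking of that specialization. The exponent bookkeeping $\gamma=\gamma_1\gamma_2=\gamma$ is also exactly as intended.
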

\begin{proof}
The conclusion follows immediately from Theorem~\ref{thm:abstract convergence rate}.
\end{proof}

\begin{corollary}[Krasnoselskii--Mann iterations with H\"older regularity]\label{cor:KM iteration}
 Let $T$ be an $\alpha$-averaged operator on a Hilbert space $H$ with ${\rm Fix}\,T\neq\emptyset$ and $\alpha\in(0,1)$. Suppose $T$ is bounded H\"older regular. Let $\sigma_0\in(0,1)$ and let $(\lambda_t)_{t\in\NN}$ be a sequence of real numbers with $\displaystyle \sigma_0:=\inf_{t \in \NN} \{\lambda_t (1-\lambda_t)\}>0$. Given an initial point $x^0 \in H$, set
  $$x^{t+1}=x^t+\lambda_t(Tx^t-x^t).$$
Then $x^t \to \bar x \in {\rm Fix}\,T \neq \emptyset$ at least with a sublinear rate $O(t^{-\rho})$ for some $\rho>0$.
In particular, if $T$ is bounded H\"older regular with uniform exponent $\gamma\in(0,1]$
 then there exist $M>0$ and $r\in[0,1)$ such that
\[
\|x^{t}-\bar x\| \le \begin{cases}
                       M t^{-\frac{\gamma}{2(1-\gamma)}}, & \gamma \in (0,1), \\
                       M \, r^t,  & \gamma=1.
                     \end{cases}
\]
\end{corollary}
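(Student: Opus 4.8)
The plan is to recognize the Krasnoselskii--Mann update as a two-operator instance of the quasi-cyclic algorithm and then invoke Theorem~\ref{thm:abstract convergence rate}. Writing the iteration as
$$x^{t+1} = (1-\lambda_t)x^t + \lambda_t Tx^t,$$
I would set $J=\{1,2\}$, take $T_1 := I$ and $T_2 := T$, and choose the weights $w_{1,t} := 1-\lambda_t$ and $w_{2,t} := \lambda_t$. These are nonnegative and sum to $1$, and the recursion \eqref{eq:quasi_cyclic} with these data reproduces the Krasnoselskii--Mann scheme exactly. (For constant $\lambda_t$ one could instead collapse to a single averaged operator and appeal to Corollary~\ref{cor:1}, but the variable case genuinely requires the two-operator viewpoint.)

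Next I would verify the three hypotheses of Theorem~\ref{thm:abstract convergence rate}. For assumption~(a): the operator $T_2=T$ is bounded H\"older regular by hypothesis, while $T_1=I$ is trivially bounded H\"older regular with uniform exponent $1$, since ${\rm Fix}\,I = H$ forces both $\di(x,{\rm Fix}\,I)=0$ and $\|x-I(x)\|=0$. Moreover both operators are $\alpha$-averaged with the \emph{same} $\alpha\in(0,1)$, because taking $R=I$ in Definition~\ref{def:expansivity prop}\eqref{def:averaged} shows that $I$ is $\alpha$-averaged for every $\alpha\in(0,1)$. For assumption~(b): the intersection of the fixed point sets is ${\rm Fix}\,I\cap{\rm Fix}\,T = {\rm Fix}\,T\neq\emptyset$, and since $\max\{\di(x,H),\di(x,{\rm Fix}\,T)\}=\di(x,{\rm Fix}\,T)$, the collection $\{H,{\rm Fix}\,T\}$ has a bounded H\"older regular intersection with uniform exponent $\gamma_2=1$ and constant $\beta=1$.

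The crux is assumption~(c), and the one step requiring any thought is the control of $\sigma$, which is the main (modest) obstacle. Since $\lambda_t(1-\lambda_t)\ge\sigma_0>0$ forces $\lambda_t\in(0,1)$, both weights are strictly positive, so $J_+(t)=\{1,2\}=J$ for every $t$ and the covering condition holds with $s=1$. To bound $\sigma$ below, I would use that the two weights sum to one: writing $m:=\min\{\lambda_t,1-\lambda_t\}$, we have $\lambda_t(1-\lambda_t)=m(1-m)\le m$, whence $m\ge\lambda_t(1-\lambda_t)\ge\sigma_0$; therefore $\sigma\ge\sigma_0>0$. With (a)--(c) established, Theorem~\ref{thm:abstract convergence rate} yields $x^t\to\bar x\in{\rm Fix}\,T$ at a sublinear rate. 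For the quantitative estimate I would pass to the uniform-exponent hypotheses (a$'$)--(b$'$): $T$ has uniform exponent $\gamma$ and $I$ has uniform exponent $1$, so $\gamma_1=\min\{1,\gamma\}=\gamma$, while $\gamma_2=1$; hence the combined exponent $\gamma_1\gamma_2=\gamma$ reproduces exactly the stated $t^{-\gamma/(2(1-\gamma))}$ and $r^t$ rates.
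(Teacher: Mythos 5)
Your proposal is correct and follows essentially the same route as the paper: recasting the Krasnoselskii--Mann update as a two-operator quasi-cyclic scheme with $T_1=I$, $T_2=T$ and weights $1-\lambda_t,\lambda_t$, checking that $I$ is bounded H\"older regular with ${\rm Fix}\,I=H$ so that hypotheses (a)--(c) of Theorem~\ref{thm:abstract convergence rate} hold, and reading off the rate with $\gamma_1\gamma_2=\gamma$. Your explicit verification that $\lambda_t(1-\lambda_t)\ge\sigma_0$ forces $\min\{\lambda_t,1-\lambda_t\}\ge\sigma_0$ is a welcome elaboration of a step the paper states without proof.
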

\begin{proof}
 First observe that the sequence $(x^t)_{t\in\NN}$ is given by $x^{t+1}=T_tx^t$ where
  $T_t=(1-\lambda_t)I+\lambda_tT.$
Here, $1-\lambda_t \ge \sigma_0>0$  and $\lambda_t\geq \sigma_0>0$ for all $t \in \mathbb{N}$ by our assumption.

 A straightforward manipulation shows that the identity map, $I$, is bounded H\"older regular with uniform exponent $\gamma_{1,1}\leq 1$. Since ${\rm Fix}\,I=H$, the collection $\{{\rm Fix}\,I,{\rm Fix}\,T\}$ has a bounded H\"older regular intersection with exponent $1$. The result now follows from Theorem~\ref{thm:abstract convergence rate}.
\end{proof}

 The following result includes \cite[Th.~4.4]{Bor_Li_Yao} and \cite[Th.~3.12]{BB} a special cases.
\begin{corollary}[Cyclic projection algorithm with H\"older regularity]
 Let $J=\{1,2,\dots,m\}$ and let $\{C_j\}_{j\in J}$ a collection of closed convex subsets of a Hilbert space $H$ with non-empty intersection.  Given $x^0\in H$, set
  $$x^{t+1}={\rm P}_{C_{j}}(x^t)\text{ where } j=t~{\rm mod}~m.$$
Suppose that $\{C_j\}_{j\in J}$ has a bounded H\"older regular intersection. Then $x^t \to \bar x\in \cap_{j\in J}C_j\neq \emptyset$ at least with a sublinear rate $O(t^{-\rho})$ for some $\rho>0$.
 In particular, if the collection $\{C_j\}_{j\in J}$ is bounded H\"older regular with uniform exponent $\gamma\in(0,1]$ there exist $M>0$ and $r\in[0,1)$ such that
\[
\|x^{t}-\bar x\| \le \begin{cases}
                       M t^{-\frac{\gamma}{2(1-\gamma)}}, & \gamma \in (0,1), \\
                       M \, r^t,  & \gamma=1.
                     \end{cases}
\]
\end{corollary}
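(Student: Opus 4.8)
The plan is to realize the cyclic projection iteration as a particular instance of the quasi-cyclic algorithm and then invoke Theorem~\ref{thm:abstract convergence rate}. To this end I would set $T_j := {\rm P}_{C_j}$ for each $j \in J$ and, at step $t$, take the indicator weights $w_{j,t}=1$ if $j\equiv t \pmod{m}$ (with residues represented in $J$) and $w_{j,t}=0$ otherwise. With this choice the update \eqref{eq:quasi_cyclic} reduces exactly to $x^{t+1}={\rm P}_{C_j}(x^t)$ with $j = t~{\rm mod}~m$, so the two iterations coincide.

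Next I would verify the structural hypotheses of the theorem. Since the nearest-point projection onto a non-empty closed convex set is firmly non-expansive, each $T_j$ is $1/2$-averaged, so one may take $\alpha = 1/2 \in (0,1)$. Moreover ${\rm Fix}\,{\rm P}_{C_j} = C_j$, whence $\cap_{j\in J}{\rm Fix}\,T_j = \cap_{j\in J}C_j \neq \emptyset$ by hypothesis. The key simplification is that bounded H\"older regularity of the operators is automatic in this setting: since $\di(x,{\rm Fix}\,T_j)=\di(x,C_j)=\|x-{\rm P}_{C_j}(x)\|=\|x-T_j(x)\|$, each $T_j$ is bounded H\"older regular with uniform exponent $\gamma_{1,j}=1$ and constant $\mu_j=1$. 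Thus assumption~(a) (and, in the uniform case, (a$'$)) holds trivially, and assumption~(b)/(b$'$) is precisely the standing hypothesis that $\{C_j\}_{j\in J}=\{{\rm Fix}\,T_j\}_{j\in J}$ has a bounded H\"older regular intersection (with uniform exponent $\gamma_2$ in the uniform case).

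It then remains only to check the weight condition~(c). Here $J_+(t)=\{j\}$ with $j = t~{\rm mod}~m$ is a singleton whose unique nonzero weight equals $1$, so $\sigma = 1 > 0$. Taking $s = m$, the cyclic control visits every index exactly once over any window of $m$ consecutive iterations, giving $J_+(t)\cup\cdots\cup J_+(t+m-1)=J$ for all $t\in\NN$. With every hypothesis of Theorem~\ref{thm:abstract convergence rate} satisfied, the first assertion (sublinear $O(t^{-\rho})$ convergence to some $\bar x \in \cap_{j\in J}C_j$) follows immediately. For the second assertion, because $\gamma_{1,j}=1$ for every $j$ we have $\gamma_1 = \min_{j\in J}\gamma_{1,j}=1$, so the governing exponent is $\gamma = \gamma_1\gamma_2 = \gamma_2$, which recovers the stated dichotomy. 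There is no genuine obstacle in this specialization; the only points needing care are confirming that the cyclic control satisfies the covering condition~(c) and observing that, for projectors, operator H\"older regularity comes for free, so that the entire regularity burden rests on the collection $\{C_j\}_{j\in J}$ itself.
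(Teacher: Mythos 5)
Your proposal is correct and follows exactly the paper's route: cast the cyclic projection iteration as a quasi-cyclic scheme with indicator weights, note that each projector is $1/2$-averaged and that $\di(x,C_j)=\|x-{\rm P}_{C_j}x\|$ makes operator H\"older regularity automatic with exponent $1$, and then invoke Theorem~\ref{thm:abstract convergence rate}. The only difference is that you spell out the verification of condition~(c) (with $s=m$ and $\sigma=1$), which the paper leaves implicit.
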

\begin{proof}
 First note that the projection operator over a closed convex set is $1/2$-averaged. Now, for each $j\in J$, $C_j={\rm Fix\,P}_{C_j}$, and hence
  $$d(x,C_j)=d(x,{\rm Fix\,P}_{C_j})=\|x-{\rm P}_{C_j}x\|\quad\forall x\in H.$$
 That is, for each $j\in J$, the projection operator ${\rm P}_{C_j}$ is bounded H\"older regular with uniform exponent $1$. The result follows from Theorem~\ref{thm:abstract convergence rate}.
\end{proof}

We now turn our attention to \emph{variational inequalities} \cite[Ch.~25.5]{BC2011}. Let $f:H\to(-\infty,+\infty]$ be a proper lower semi-continuous (l.s.c.\@) convex function and let $F: H \mapsto H$ be \emph{$\beta$-cocoercive}, that is,
\[
\langle F(x)-F(y),x-y\rangle \ge \beta \|F(x)-F(y)\|^2 \quad \forall x,y \in H.
\]
 The \emph{generalized variational inequality problem}, denoted ${\rm VIP}(F,f)$, is:
\begin{equation}\label{eq:VIP}
 \text{Find~}x^* \in H\text{~such that~}f(x)-f(x^*)+\langle F(x^*), x-x^*\rangle \ge 0,
\end{equation}
and the set of \emph{solutions} to \eqref{eq:VIP} is denoted ${\rm Sol}({\rm VIP}(F,f))$.

The \emph{forward-backward splitting method} is often employed to solve $VIP(F,f)$ (see, for instance, \cite[Prop.~25.18]{BC2011}) and generates a sequence $\{x^t\}$ according to
 \begin{equation}
 x^{t+1}=x^t+\lambda_t \big(R_{\gamma \partial f}(x^t-\gamma F(x^t))-x^t\big) \quad\forall t \in \mathbb{N}.
 \end{equation}
where $R_{T_0}:=(I+T_0)^{-1}$ denotes the \emph{resolvent} of an operator $T_0$. Here, we note that $\partial f$ is a maximal monotone operator and so, its resolvent is single-valued.

\begin{corollary}[Forward-backward splitting method for variational inequality problem] \label{cor:complexity_FB}
Let $f:H\to(-\infty,+\infty]$ be a proper l.s.c.\@ convex function, let $F:H \to H$ be a $\beta$-cocoercive operator with $\beta>0$, let $\gamma\in(0,2\beta)$, and let $(\lambda_t)_{t\in\mathbb{N}}$ be a sequence of real numbers with $\sigma_0:=\inf_{t\in\mathbb{N}}\{\lambda_t(1-\lambda_t)\}\in (0,1)$. Suppose ${\rm Sol}({\rm VIP}(F,f))\neq\emptyset$. Let $x^0 \in H$ and set
\begin{equation}\label{FB_VIP}
 x^{t+1}= (1-\lambda_t)\,x^t+ \lambda_t R_{\gamma \partial f}(x^t-\gamma F(x^t))\quad\forall t \in \mathbb{N}.
 \end{equation}
 If $T_2:=  R_{\gamma \partial f}(I-\gamma F)$ is bounded H\"older regular then $x^t \to \bar x \in {\rm Sol}({\rm VIP}(F,f))$ at least with a sublinear rate $O(t^{-\rho})$ for some $\rho>0$. In particular, if $T$ is bounded H\"older regular with uniform exponent $\gamma\in(0,1]$ then there exist $M>0$ and $r\in[0,1)$ such that
\[
\|x^{t}-\bar x\| \le \begin{cases}
                       M t^{-\frac{\gamma}{2(1-\gamma)}}, & \gamma \in (0,1), \\
                       M \, r^t,  & \gamma=1.
                     \end{cases}
\]
\end{corollary}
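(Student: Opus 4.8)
The plan is to recognize the forward–backward iteration \eqref{FB_VIP} as a Krasnoselskii--Mann iteration driven by the single operator $T_2 = R_{\gamma\partial f}(I-\gamma F)$ and then to invoke Corollary~\ref{cor:KM iteration}. To apply that corollary I must verify three things: that $T_2$ is $\alpha$-averaged for some $\alpha\in(0,1)$; that ${\rm Fix}\,T_2 = {\rm Sol}({\rm VIP}(F,f))$, so that $T_2$ has the nonempty fixed point set demanded by the hypotheses and so that the limit point is a genuine solution of \eqref{eq:VIP}; and that the relaxation parameters $\lambda_t$ satisfy the standing condition of Corollary~\ref{cor:KM iteration}.

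First I would establish the averagedness of $T_2$ as a composition of two averaged operators. Since $\partial f$ is maximal monotone, its resolvent $R_{\gamma\partial f}=(I+\gamma\partial f)^{-1}$ is firmly nonexpansive, hence $1/2$-averaged (see \cite{BC2011}). For the forward step $I-\gamma F$ I would use cocoercivity of $F$ directly: expanding $\|(I-\gamma F)x-(I-\gamma F)y\|^2$ and inserting the defining inequality $\langle F(x)-F(y),x-y\rangle\ge\beta\|F(x)-F(y)\|^2$ yields $\|(I-\gamma F)x-(I-\gamma F)y\|^2 \le \|x-y\|^2-\gamma(2\beta-\gamma)\|F(x)-F(y)\|^2$. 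Because $\gamma\in(0,2\beta)$ we have $\gamma(2\beta-\gamma)>0$, and comparing this with the characterization of averaged maps (cf.\ Fact~\ref{fact:averaged} and \cite{BC2011}, where $(I-T)x-(I-T)y=\gamma(F(x)-F(y))$) shows that $I-\gamma F$ is $\tfrac{\gamma}{2\beta}$-averaged. The composition of a $1/2$-averaged and a $\tfrac{\gamma}{2\beta}$-averaged operator is then $\alpha$-averaged for some $\alpha\in(0,1)$ by the standard composition rule for averaged operators \cite{BC2011}.

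Next I would compute the fixed point set. Unwinding the resolvent, $x^*\in{\rm Fix}\,T_2$ is equivalent to $x^*-\gamma F(x^*)\in(I+\gamma\partial f)(x^*)$, hence to $-F(x^*)\in\partial f(x^*)$, i.e.\ $0\in F(x^*)+\partial f(x^*)$. The subdifferential inequality $f(x)\ge f(x^*)+\langle -F(x^*),x-x^*\rangle$ for all $x\in H$ is in turn exactly the condition that $x^*$ solve \eqref{eq:VIP}. Thus ${\rm Fix}\,T_2={\rm Sol}({\rm VIP}(F,f))$, which is nonempty by hypothesis. Finally, rewriting \eqref{FB_VIP} as $x^{t+1}=x^t+\lambda_t\big(T_2 x^t-x^t\big)$ exhibits it as a Krasnoselskii--Mann iteration for $T_2$, and the hypothesis $\sigma_0=\inf_{t}\{\lambda_t(1-\lambda_t)\}\in(0,1)$ is precisely the parameter requirement of Corollary~\ref{cor:KM iteration}. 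Applying that corollary, using the assumed bounded H\"older regularity of $T_2$, delivers both the sublinear convergence and, in the uniform-exponent case, the stated explicit rate.

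I do not anticipate a serious obstacle, as every step is a structural reduction to results already in hand. The only computation demanding genuine care is the averaging estimate for $I-\gamma F$ together with the bookkeeping in the composition rule needed to confirm $\alpha\in(0,1)$; once $T_2$ is shown to be averaged with fixed point set equal to the solution set, the convergence conclusion is immediate from Corollary~\ref{cor:KM iteration}.
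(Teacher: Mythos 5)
Your proposal is correct and follows essentially the same route as the paper: both decompose $T_2$ as the composition of the $1/2$-averaged resolvent with the $\gamma/(2\beta)$-averaged forward step $I-\gamma F$, identify ${\rm Fix}\,T_2$ with ${\rm Sol}({\rm VIP}(F,f))$ by unwinding the resolvent to the inclusion $0\in(\partial f+F)(x)$, and then invoke Corollary~\ref{cor:KM iteration}. The only cosmetic difference is that you derive the averagedness of $I-\gamma F$ by direct expansion where the paper cites \cite[Prop.~4.33]{BC2011}, and you spell out the equivalence with the variational inequality via the subdifferential inequality, which the paper leaves implicit.
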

\begin{proof}
 Since $F$ is $\beta$-cocoercive, \cite[Prop.~4.33]{BC2011} shows that $I-\gamma F$ is $\gamma/(2\beta)$-averaged. The operator $R_{\gamma \partial f}$ is $\frac{1}{2}$-averaged, as the resolvent of a maximally monotone operator. By \cite[Prop.~4.32]{BC2011}, $T_2$ is $\frac{2}{3}$-averaged. Applying Corollary~\ref{cor:KM iteration} with $T=T_2$, the claimed convergence rate to a point $\bar{x}\in{\rm Fix\ }T_2$ thus follows.

 It remains to show that ${\rm Fix\ }T_2={\rm Sol}({\rm VIP}(F,f))$. Indeed, $x\in{\rm Fix\ }T_2$ if and only if
 $$ x=(I+\gamma \partial f)^{-1}\big((I-\gamma F)(x)\big) \iff x-\gamma F(x)\in x+\gamma \partial f(x) \iff 0\in (\partial f+F)(x).$$
The claimed result follows.
\end{proof}

\begin{remark}[Semi-algebraic forward-backward splitting]
 Corollary~\ref{cor:complexity_FB} applies, in particular, when $f$ and $F$ are semi-algebraic functions and $H=\mathbb{R}^n$. To see this, first note that $\partial f$ is semi-algebraic as the sub-differential of the semi-algebraic function \cite[p.~6]{drusvyatskiy}. The graph of the resolvent to $\gamma\partial f$ may then be expressed as
  $${\rm gph\ } R_{\gamma\partial f} = \{\left(z,(I+\gamma \partial f)^{-1}(z)\right):z\in\mathbb{R}^n\} = \{\left((I+\gamma \partial f)(z), z\right) : z\in\mathbb{R}^n\},$$
 which is a semi-algebraic set since $(I+\gamma\partial f)$ is a semi-algebraic function by Fact~\ref{fact:6}, and hence the resolvent is a semi-algebraic mapping. A further application of Fact~\ref{fact:6}, shows that $T_2$ is semi-algebraic, as the composition of semi-algebraic maps.

 We may therefore define two continuous semi-algebraic functions $\phi(x):=\di(x,{\rm Fix }T_2)$ and $\varphi(x):=\|x-Tx\|$. Since $\phi^{-1}(0)={\rm Fix }T_2 = \varphi^{-1}(0)$, (P6) of Fact~\ref{fact:6} yields, for any compact semi-algebraic set $K$, the existence of constants $c>0$ and $\tau\in(0,1]$ such that
  $$\|x-T_2x\| \leq c \di^\tau (x,{\rm Fix }T_2)\quad\forall x\in K,$$
 or, in other words, the bounded H\"older regularity of $T_2$.
\end{remark}

\section{The rate of convergence of DR algorithms for convex feasibility problems}\label{sec:DR}
 We now specialize our convergence results to the classical DR algorithm and its variants in the setting of convex feasibility problems. In doing so, a convergence rate is obtained under the H\"{o}lder regularity condition. Recall that the basic \emph{Douglas--Rachford algorithm} for two set feasibility problems can be stated as follows:
\vspace{-0.2cm}

\begin{center}
\begin{algorithm}[H] \label{alg:DR}
 \caption{Basic Douglas--Rachford algorithm}
 \KwData{Two closed and convex sets $C,D\subseteq H$}
 Choose an initial point $x^0\in H$\;
 \For{$t=0,1,2,3,\dots$}{
  Set:
  \begin{equation}\label{scheme}
   \left\{
    \begin{aligned}
      &y^{t+1}:= {\rm P}_C(x^t), \\
      &z^{t+1}:= {\rm P}_D(2y^{t+1}-x^t),\\
      &x^{t+1}:=x^t+(z^{t+1}- y^{t+1}).
    \end{aligned}
   \right.
  \end{equation}
  }
\end{algorithm}
\end{center}
Direct verification shows that the relationship between consecutive terms in the sequence $(x^t)$ of \eqref{scheme} can be described in terms of the firmly nonexpansive \emph{(two-set) Douglas--Rachford operator} which is of the form
  \begin{equation}\label{eq:two set DR}
   T_{C,D}=\frac{1}{2}\left(I+R_DR_C\right),
  \end{equation}
where $I$ is the identity mapping and $R_C:=2P_C-I$ is the \emph{reflection operator} with respect to the set $C$ (`reflect-reflect-average').

We shall also consider the abstraction given by Algortihm~\ref{alg:many set DR} which chooses two constraint sets from some finite collection at each iteration. Note that iterations \eqref{scheme} and \eqref{scheme many} have the same structure.

The motivation for studying Algorithm~\ref{alg:many set DR} is that, beyond Algorithm~\ref{alg:DR}, it include two further DR-type schemes from the literature.
The first scheme is the \emph{cyclic DR algorithm}  and is generated according to:
\begin{equation*}
u^{t+1} = (T_{C_m,C_1}\,T_{C_{m-1},C_m}\,\ldots\, T_{C_2,C_3}\,T_{C_1,C_2})(u^t) \quad\forall t \in \mathbb{N},
\end{equation*}
which corresponds the Algorithm~\ref{alg:many set DR} with $u^t=x^{mt}$, $t \in \NN$, $p=m$ and $\Omega_j=(j,j+1)$, $j=1,\ldots,m-1$, and $\Omega_m=(m,1)$.
The second scheme the \emph{cyclically anchored DR algorithm} and is generated according to:
\begin{equation*}
u^{t+1} = (T_{C_1,C_m}\, \ldots\, T_{C_1,C_3}\,T_{C_1,C_2})(u^t)\quad\forall t \in \mathbb{N},
\end{equation*}
which corresponds the Algorithm~\ref{alg:many set DR} with $u^t=x^{(m-1)t}$, $t \in \NN$, $p=m-1$ and $\Omega_j=(1,j+1)$, $j=1,\ldots,m-1$.
 The following lemma shows that underlying operators both these methods are also averaged.
\begin{lemma}[Compositions of DR operators] \label{lemma:averaged}
Let $p$ be a positive integer. The composition of $p$ Douglas--Rachford operators is $\frac{p}{p+1}$--averaged.
\end{lemma}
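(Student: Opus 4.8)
The plan is to proceed by induction on $p$, the number of factors, using the fact that each Douglas--Rachford operator is firmly nonexpansive together with the standard rule governing the composition of two averaged operators.

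First I would record the base observation. By \eqref{eq:two set DR}, each operator $T_{C,D}=\frac12(I+R_DR_C)$ is the average of the identity with the nonexpansive operator $R_DR_C$, hence firmly nonexpansive; as noted after Definition~\ref{def:expansivity prop}, the firmly nonexpansive maps are precisely the $\frac12$-averaged maps. Since $\frac{1}{1+1}=\frac12$, this settles the case $p=1$.

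For the inductive step I would invoke the composition rule of \cite[Prop.~4.32]{BC2011}: if $S_1$ is $\alpha_1$-averaged and $S_2$ is $\alpha_2$-averaged with $\alpha_1,\alpha_2\in(0,1)$, then the composition $S_2S_1$ is $\alpha$-averaged with
$$\alpha=\frac{\alpha_1+\alpha_2-2\alpha_1\alpha_2}{1-\alpha_1\alpha_2}.$$
Assuming inductively that the composition of $p$ Douglas--Rachford operators is $\frac{p}{p+1}$-averaged, I would compose with one further Douglas--Rachford operator, that is, a $\frac12$-averaged map, and substitute $\alpha_1=\frac{p}{p+1}$ and $\alpha_2=\frac12$. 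A short computation shows the numerator collapses to $\frac12$ and the denominator to $\frac{p+2}{2(p+1)}$, whence $\alpha=\frac{p+1}{p+2}=\frac{p+1}{(p+1)+1}$, which closes the induction and gives the claimed averaging constant $\frac{p}{p+1}$ for all $p$.

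The only care required is algebraic bookkeeping: one must verify that the composition formula telescopes \emph{exactly} along $\frac{p}{p+1}\mapsto\frac{p+1}{p+2}$ rather than merely bounding the averaging constant from above, and check that all intermediate averaging constants remain in $(0,1)$ so that \cite[Prop.~4.32]{BC2011} applies at each stage (indeed $\frac{p}{p+1}\in(0,1)$ throughout). No genuine obstacle beyond this elementary verification is anticipated.
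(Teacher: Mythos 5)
Your proof is correct and follows essentially the same route as the paper: each Douglas--Rachford operator is firmly nonexpansive, hence $\tfrac12$-averaged, and the averaging constant of the composition is then obtained from the composition rule for averaged operators in \cite{BC2011}. The only difference is cosmetic --- the paper applies the $m$-fold composition result of \cite[Prop.~4.32]{BC2011} in one step (which for $p$ operators that are all $\tfrac12$-averaged gives $\tfrac{p}{p+1}$ directly), whereas you induct with the sharp two-operator formula $\alpha=\frac{\alpha_1+\alpha_2-2\alpha_1\alpha_2}{1-\alpha_1\alpha_2}$ (which is a later refinement due to Combettes and Yamada rather than Prop.~4.32 itself); your algebra $\frac{p}{p+1}\mapsto\frac{p+1}{p+2}$ checks out and both arguments yield the same constant.
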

\begin{proof}
 The two-set Douglas--Rachford operator of \eqref{eq:two set DR} is firmly nonexpansive, and hence $1/2$-averaged. The result follows by \cite[Prop.~4.32]{BC2011}.
\end{proof}

\begin{center}
\begin{algorithm}[H]
 \caption{A multiple-sets Douglas--Rachford algorithm}\label{alg:many set DR}
 \KwData{A family of $m$ closed and convex sets $C_1,C_2,\dots,C_m\subseteq H$}
 Choose a list of $2$-tuples $\Omega_1,\dots,\Omega_p \in\{(i_1,i_2):i_1,i_2=1,2,\dots,m\text{ and }i_1\neq i_2\}$ with $\cup_{j=1}^p \Omega_j =\{1,\ldots,m\}$\;
 Define $\Omega_0=\Omega_m$\;
 Choose an initial point $x^0\in H$\;
 \For{$t=0,1,2,3,\dots$}{
  Set the indices $(i_1,i_2):=\Omega_{t'}$ where $t'=t+1{\rm~mod~}s$\;
  Set
  \begin{equation}\label{scheme many}
   \left\{
    \begin{aligned}
      &y^{t+1}:= {\rm P}_{C_{i_1}}(x^t), \\
      &z^{t+1}:= {\rm P}_{C_{i_2}}(2y^{t+1}-x^t),\\
      &x^{t+1}:=x^t+(z^{t+1}- y^{t+1}).
    \end{aligned}
   \right.
  \end{equation}
  }
\end{algorithm}
\end{center}

\begin{corollary}[Convergence rate for the multiple-sets DR algorithm] \label{TheMTR:6}
Let $C_1,C_2,\dots,C_m$ be closed convex sets in a Hilbert space $H$ with non-empty intersection.
Let $\{\Omega_j\}_{j=1}^p$ and $\{(y^t,z^t,x^t)\}$ be generated by the multiple-sets Douglas--Rachford algorithm \eqref{scheme}. Suppose that:
 \begin{enumerate}[(a)]
  \item For each $j\in\{1,\dots,p\}$, the operator $T_{\Omega_j}$ is bounded H\"{o}lder regular.
  \item The collection $\{{\rm Fix}\,T_{\Omega_j}\}_{j=1}^p$ has a bounded H\"older regular intersection.
\end{enumerate}
Then $x^t \rightarrow \bar x\in\cap_{j=1}^p{\rm Fix\,}T_{\Omega_j}$ with at least a sublinear rate $O(t^{-\rho})$ for some $\rho>0$. In
particular, suppose we assume the stronger assumptions:
\begin{enumerate}[(a$'$)]
 \item  For each $j\in\{1,\dots,p\}$, the operator $T_{\Omega_j}$ is  bounded H\"{o}lder regular with uniform exponent $\gamma_{1,j}$.
 \item The collection $\{{\rm Fix}\,T_{\Omega_j}\}_{j=1}^p$ has a bounded H\"older regular intersection with uniform exponent $\gamma_2\in(0,1]$.
\end{enumerate}
Then there exist $M>0$ and $r \in (0,1)$ such that
\[
\|x^{t}-\bar x\| \le \left\{\begin{array}{ccl}
M t^{-\frac{\gamma}{2(1-\gamma)}} & \mbox{ if } & \gamma \in (0,1), \\
                                    M \, r^t & \mbox{ if } & \gamma=1.
                       \end{array}
\right.
\]
where $\gamma:=\gamma_1 \gamma_2$ where $\gamma_1:=\min\{\gamma_{1,j}\mid 1\le j\le s\}$.
\end{corollary}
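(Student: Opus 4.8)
The plan is to recognize the multiple-sets Douglas--Rachford algorithm as a cyclic instance of the quasi-cyclic algorithm and then invoke Theorem~\ref{thm:abstract convergence rate} directly. To this end, set $J := \{1,\dots,p\}$ and, for each $j\in J$, let $T_j := T_{\Omega_j}$ be the two-set Douglas--Rachford operator associated with the tuple $\Omega_j$. The first task is to verify that the sequence $\{x^t\}$ produced by \eqref{scheme many} is exactly the quasi-cyclic iteration \eqref{eq:quasi_cyclic} driven by the operators $\{T_j\}_{j\in J}$ under a cyclic weighting. A direct computation using $R_C = 2{\rm P}_C - I$ shows that the three-line update in \eqref{scheme many} reduces to $x^{t+1} = T_{\Omega_{t'}}(x^t)$, where the index $t'$ cycles through $1,\dots,p$; equivalently, setting $w_{j,t}=1$ when $j$ is the active index at step $t$ and $w_{j,t}=0$ otherwise gives $x^{t+1} = \sum_{j\in J} w_{j,t} T_j(x^t)$.

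Next I would check the hypotheses of Theorem~\ref{thm:abstract convergence rate}. Each $T_{\Omega_j}$ is a single Douglas--Rachford operator, hence firmly nonexpansive and therefore $\tfrac12$-averaged; thus all $T_j$ are $\alpha$-averaged with the common value $\alpha=\tfrac12\in(0,1)$ (this is Lemma~\ref{lemma:averaged} with $p=1$). For the common-fixed-point requirement, observe that any $z\in\cap_{i=1}^m C_i$ satisfies ${\rm P}_{C_{i_1}}(z) = {\rm P}_{C_{i_2}}(z) = z$ for every tuple $(i_1,i_2)$, so $z\in{\rm Fix}\,T_{\Omega_j}$ for all $j$; hence $\cap_{j\in J}{\rm Fix}\,T_{\Omega_j}\supseteq\cap_{i=1}^m C_i\neq\emptyset$. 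Assumptions~(a) and (b) of the corollary are verbatim Assumptions~(a) and (b) of the theorem. Finally, for the weighting condition~(c): since exactly one index is active at each step, $J_+(t)$ is a singleton and the active weight equals $1$, so $\sigma=1>0$; and because the selection is cyclic with period $p$, taking $s=p$ makes $J_+(t)\cup\dots\cup J_+(t+p-1)=J$ for every $t$.

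With all hypotheses of Theorem~\ref{thm:abstract convergence rate} in place, the general conclusion gives $x^t\to\bar x\in\cap_{j\in J}{\rm Fix}\,T_{\Omega_j}$ with a sublinear rate $O(t^{-\rho})$, and the uniform-exponent conclusion, applied with $\gamma_1=\min_j\gamma_{1,j}$ and $\gamma=\gamma_1\gamma_2$, yields the stated dichotomy between the polynomial rate $M t^{-\gamma/(2(1-\gamma))}$ when $\gamma\in(0,1)$ and the linear rate $M r^t$ when $\gamma=1$. The only step requiring genuine care---rather than a mere citation---is the bookkeeping identification in the first paragraph: confirming that the iterate $x^t$ (as opposed to the shadow sequences $y^t,z^t$) is precisely the quasi-cyclic orbit, and that its limit lies in the common fixed point set $\cap_j{\rm Fix}\,T_{\Omega_j}$ as claimed rather than in some related set. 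Everything else is a direct translation of the theorem's hypotheses, so I do not anticipate a substantive obstacle.
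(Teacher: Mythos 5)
Your proposal is correct and follows essentially the same route as the paper: identify the multiple-sets DR iteration as a quasi-cyclic scheme with one-hot cyclic weights, note that each $T_{\Omega_j}$ is firmly nonexpansive (hence $\tfrac12$-averaged), and invoke Theorem~\ref{thm:abstract convergence rate}. You simply spell out details the paper leaves implicit (the nonemptiness of $\cap_j{\rm Fix}\,T_{\Omega_j}$ and the verification of assumption~(c)), which is fine.
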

\begin{proof}
Let $J=\{1,2,\dots,s\}$. For all $j\in J$, set  $T_j = T_{\Omega_j}$ and
$$w_{t,j}\equiv \begin{cases}
                  1 & j=t+1{\rm~mod~}s, \\
                  0 & \text{otherwise}. \\
                \end{cases}.$$
 Since $T_{\Omega_j}$ is firmly nonexpansive (that is, $1/2$-averaged), the conclusion follows immediately
from Theorem~\ref{thm:abstract convergence rate}.
\end{proof}

We next observe that  bounded H\"{o}lder regularity of the Douglas-Rachford operator $T_{\Omega_j}$ and
H\"older regular intersection of the collection $\{{\rm Fix}\,T_{\Omega_j}\}_{j=1}^p$
are automatically satisfied for the semi-algebraic convex case, and so, sublinear convergence analysis follows in this case without any further regularity conditions.
This follows from:

\begin{proposition}[Semi-algebraicity implies H{\"o}lder regularity \& sublinear convergence] \label{prop:T_DR hoelder}
Let $C_1$, $C_2$, $\dots,C_m$ be convex, semi-algebraic sets in $\mathbb{R}^n$ with non-empty intersection which can be described by polynomials (in the sense of \eqref{eq:semi alg set}) on $\mathbb{R}^n$ having  degree $d$.
Let $\{\Omega_j\}_{j=1}^p$ be a list of $2$-tuples with $\cup_{j=1}^p \Omega_j =\{1,\ldots,m\}$.
Then,  \begin{enumerate}[(a)]
  \item For each $j\in\{1,\dots,p\}$, the operator $T_{\Omega_j}$ is bounded H\"{o}lder regular.  Moreover, if $d=1$, then ${\rm T}_{\Omega_j}$ is bounded H\"older regular with uniform exponent~$1$.

  \item The collection $\{{\rm Fix}\,T_{\Omega_j}\}_{j=1}^p$ has a bounded H\"older regular intersection.
\end{enumerate}
In particular, let $\{(y^t,z^t,x^t)\}$ be generated by the multiple-sets Douglas--Rachford algorithm \eqref{scheme many}. Then there exists $\rho>0$ such that $x^t \rightarrow \bar x\in\cap_{j=1}^p{\rm Fix\,}T_{\Omega_j}$ with at least a sublinear rate $O(t^{-\rho})$.
\end{proposition}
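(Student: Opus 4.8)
The plan is to reduce both regularity claims to the {\L}ojasiewicz inequality (Fact~\ref{fact:6}(P6)) after verifying that every object in sight is semi-algebraic. The key preliminary observation is that the nearest-point projection onto a closed convex semi-algebraic set is a semi-algebraic mapping: its graph $\{(x,y):y\in C_i,\ \|x-y\|^2\le\|x-a\|^2\ \forall a\in C_i\}$ is semi-algebraic by quantifier elimination (or, equivalently, because $\di(\cdot,C_i)$ is semi-algebraic by Fact~\ref{fact:6}(P2)). Hence each reflection $R_{C_i}=2P_{C_i}-I$ and each Douglas--Rachford operator $T_{\Omega_j}=\tfrac12(I+R_{C_{i_2}}R_{C_{i_1}})$ is semi-algebraic, as is the residual $x\mapsto\|x-T_{\Omega_j}(x)\|$ and, via Fact~\ref{fact:6}(P2), each $\di(\cdot,{\rm Fix}\,T_{\Omega_j})$. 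I would also record two structural facts: since $T_{\Omega_j}$ is firmly nonexpansive, ${\rm Fix}\,T_{\Omega_j}$ is closed and convex (and semi-algebraic, being the zero set of the residual); and since any $c\in\bigcap_{i=1}^m C_i$ satisfies $P_{C_{i_1}}c=P_{C_{i_2}}c=c$ and therefore $T_{\Omega_j}(c)=c$, we get $\emptyset\neq\bigcap_{i=1}^m C_i\subseteq\bigcap_{j=1}^p{\rm Fix}\,T_{\Omega_j}$.

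For part (a), fix a bounded set $K$ and enclose it in a closed ball $B$ large enough to meet ${\rm Fix}\,T_{\Omega_j}$ (possible since this set is non-empty); $B$ is compact and semi-algebraic. Applying (P6) with $\phi(x)=\|x-T_{\Omega_j}(x)\|$ and $\psi(x)=\di(x,{\rm Fix}\,T_{\Omega_j})$, which are continuous and semi-algebraic on $B$ with $\phi^{-1}(0)={\rm Fix}\,T_{\Omega_j}=\psi^{-1}(0)$, yields constants $c>0$ and $\tau\in(0,1]$ with $\di(x,{\rm Fix}\,T_{\Omega_j})\le c\,\|x-T_{\Omega_j}(x)\|^{\tau}$ on $B\supseteq K$; this is precisely bounded H\"older regularity (the exponent is allowed to depend on $K$ in the non-uniform notion).

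For the refinement when $d=1$, each $C_i$ is a polyhedron, so each $P_{C_i}$, and hence $T_{\Omega_j}$ and $x\mapsto x-T_{\Omega_j}(x)$, is piecewise affine with polyhedral graph. Here I would invoke a Hoffman/Robinson-type polyhedral error bound for piecewise affine mappings to obtain a \emph{global} estimate $\di(x,{\rm Fix}\,T_{\Omega_j})\le\mu\,\|x-T_{\Omega_j}(x)\|$ with $\mu$ independent of $K$, giving uniform exponent $1$. Part (b) is handled in the same way as (a): with $\phi(x)=\max_{1\le j\le p}\di(x,{\rm Fix}\,T_{\Omega_j})$ and $\psi(x)=\di\!\left(x,\bigcap_{j=1}^p{\rm Fix}\,T_{\Omega_j}\right)$, both semi-algebraic by Fact~\ref{fact:6}(P2)--(P3) and sharing the zero set $\bigcap_{j=1}^p{\rm Fix}\,T_{\Omega_j}$, an application of (P6) on a suitable compact ball gives the bounded H\"older regular intersection of the closed convex family $\{{\rm Fix}\,T_{\Omega_j}\}_{j=1}^p$. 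With (a) and (b) established, the final convergence statement follows immediately from Corollary~\ref{TheMTR:6}.

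I expect the main obstacles to be two bookkeeping matters rather than a deep difficulty. First, the hypothesis $\phi^{-1}(0)\neq\emptyset$ in (P6) forces the enlargement of $K$ to a ball that actually meets the relevant zero set; if this is not arranged, the inequality must instead be obtained trivially, using that $\phi$ is bounded below and $\psi$ bounded above on a compact set disjoint from the zero set. Second, the $d=1$ claim genuinely requires the sharper polyhedral error bound, since {\L}ojasiewicz alone only produces some $\tau\in(0,1]$ and not the value $\tau=1$.
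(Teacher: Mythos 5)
Your proposal is correct and follows essentially the same route as the paper: semi-algebraicity of the projections makes the Douglas--Rachford residual and the distance to its fixed-point set semi-algebraic with a common zero set, the {\L}ojasiewicz inequality (Fact~\ref{fact:6}(P6)) on a large enough compact ball gives both (a) and (b), Robinson's theorem handles the polyhedral case $d=1$, and the convergence claim then follows from the quasi-cyclic machinery. The one caveat is your assertion of a \emph{global} linear error bound with $\mu$ independent of $K$ when $d=1$: piecewise affine maps need not satisfy such a global bound (e.g. $F(x)=\min\{x,1\}$ on $\mathbb{R}_+$), but this is harmless since the definition of uniform exponent~$1$ only requires the exponent, not the constant, to be $K$-independent, and that follows from Robinson's local subregularity plus the compactness argument the paper uses.
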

\begin{proof}
 Fix any $j \in \{1,\ldots,p\}$. We first verify that the operator $T_{\Omega_j}$ is bounded H\"{o}lder regular. Without loss of
 generality, we assume that $\Omega_j=\{1,2\}$ and so, $T_{\Omega_j}=T_{C_1,C_2}$ where $T_{C_1,C_2}$ is the Douglas-Rachford operator
 for $C_1$ and $C_2$. Recall that
 for each $x \in \mathbb{R}^n$,
$T_{C_1,C_2}(x)-x=  {\rm P}_{C_2}({\rm R}_{C_1}(x))-{\rm P}_{C_1}(x)$. We now distinguish two cases depending on the value of the degree $d$ of the polynomials which
describes $C_j$, $j=1,2$.

\emph{Case 1 $(d>1)$:} We first observe that, for a closed convex semi-algebraic set $C \subseteq \mathbb{R}^n$, the projection mapping $x \mapsto {\rm P}_C(x)$
 is a semi-algebraic mapping. This implies that, for $i=1,2$, $x\mapsto {\rm P}_{C_i}(x)$ and $x\mapsto {\rm R}_{C_i}(x)=2{\rm P}_{C_i}(x)-x$ are all semi-algebraic mappings.
 Since the composition of semi-algebraic maps remains semi-algebraic ((P5) of Fact~\ref{fact:6}), we deduce that
$f \colon x \mapsto \|T_{C_1,C_2}x-x\|^2$ is a continuous semi-algebraic function. By {\rm (P4)} of Fact~\ref{fact:6}, ${\rm Fix\,}T_{C_1,C_2}=\{x \mid f(x)=0\}$ which is therefore a semi-algebraic set. By {\rm (P2)} of Fact~\ref{fact:6}, the function $\di(\cdot, {\rm Fix\,} T_{C_1,C_2})$ is semi-algebraic, and clearly
$\di(\cdot, {\rm Fix\,} T_{C_1,C_2})^{-1}(0) = f^{-1}(0)$.

By the {\L}ojasiewicz inequality for semi-algebraic functions ({\rm (P6)} of Fact~\ref{fact:6}), we see that for every $\rho>0$, one can find $\mu>0$ and $\gamma \in (0,1]$ such that
\[
\di(x, {\rm Fix\, T_{C_1,C_2}}) \le \mu \|x-T_{C_1,C_2}x\|^{\gamma} \quad\forall x \in  \mathbb{B}(0,\rho).
\]
So, the Douglas--Rachford operator $T_{C_1,C_2}$ is bounded H\"{o}lder regular in this case.

 \emph{Case 2 $(d=1)$:} In this case, both $C_1$ and $C_2$ are polyhedral, hence their projections ${\rm P}_{C_1}$ and ${\rm P}_{C_2}$ are piecewise affine mappings. Noting that composition of piecewise affine mappings remains piecewise affine \cite{Sontag}, we deduce that
$F \colon x \mapsto T_{C_1,C_2}(x)-x$ is continuous and piecewise affine.
 Then, Robinson's theorem on metric subregularity of piecewise affine mappings \cite{Robinson} implies that for all $a \in \mathbb{R}^n$, there exist $\mu>0$, $\epsilon>0$  such that
\[
\di(x,{\rm Fix\,}T_{C_1,C_2})	=\di(x,F^{-1}(0)) \le \mu \|F(x)\|=\mu\|x-T_{C_1,C_2}(x)\|\quad\forall x \in \mathbb{B}(a,\epsilon).
\]
Then, a standard compactness argument shows that the Douglas--Rachford operator $T_{C_1,C_2}$ is bounded linear regular, that is, uniformly bounded H\"{o}lder regular with exponent $1$.

Next, we assert that the collection $\{{\rm Fix}\,T_{\Omega_j}\}_{j=1}^p$ has a bounded H\"older regular intersection. To see this, as in the proof of part (a), we can show that for each
$j=1,\ldots,p$, ${\rm Fix\,}T_{\Omega_j}$ is a semi-algebraic set. Then, their intersection
$\cap_{j=1}^p{\rm Fix\,}T_{\Omega_j}$ is also a semi-algebraic set. Thus, $\psi(x)=\di(x, \cap_{j=1}^p{\rm Fix\,}T_{\Omega_j})$ and
$\displaystyle \phi(x)=\max_{1 \le j \le p}\di(x, {\rm Fix\,}T_{\Omega_j})$ are semi-algebraic functions. It is easy to see that $\phi^{-1}(0)=\psi^{-1}(0)$ and hence  the {\L}ojasiewicz inequality for semi-algebraic functions ({\rm (P6)} of Fact~\ref{fact:6})
implies that the collection $\{{\rm Fix}\,T_{\Omega_j}\}_{j=1}^p$ has a bounded H\"older regular intersection.

The final conclusion follows by Theorem~\ref{thm:abstract convergence rate}.
\end{proof}

Next, we establish the convergence rate for DR algorithm assuming bounded  H\"{o}lder regularity of the Douglas--Rachford operator $T_{C,D}$.
\begin{corollary}[Convergence rate for the DR algorithm] \label{cor:DR holder}
Let $C,D$ be two closed convex sets in a Hilbert space $H$ with $C\cap D \neq \emptyset$, and let $T_{C,D}$ be the Douglas--Rachford operator.
Let $\{(y^t,z^t,x^t)\}$ be generated by the Douglas--Rachford algorithm \eqref{scheme}. Suppose that $T_{C,D}$ is bounded  H\"{o}lder regular. Then  $x^t \rightarrow \bar x\in {\rm Fix\,}T_{C,D}$ with at least a sublinear rate $O(t^{-\rho})$ for some $\rho>0$. In particular, if $T_{C,D}$ is bounded H\"older regular with uniform exponent $\gamma \in (0,1]$ then there exist $M>0$ and $r \in (0,1)$ such that
\[
\|x^{t}-\bar x\| \le \left\{\begin{array}{ccl}
M t^{-\frac{\gamma}{2(1-\gamma)}} & \mbox{ if } & \gamma \in (0,1), \\
                                    M \, r^t & \mbox{ if } & \gamma=1.
                       \end{array}
\right.
\]
\end{corollary}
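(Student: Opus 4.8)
The plan is to recognize this corollary as the single-operator specialization of Corollary~\ref{cor:1}, so that the whole statement reduces to two short verifications together with a direct appeal to that earlier result.

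First, I would confirm that the Douglas--Rachford iteration \eqref{scheme} is precisely the basic fixed-point iteration driven by the operator $T_{C,D}$ of \eqref{eq:two set DR}. Substituting $y^{t+1}={\rm P}_C(x^t)$ and $z^{t+1}={\rm P}_D(2y^{t+1}-x^t)={\rm P}_D({\rm R}_C(x^t))$ into the update $x^{t+1}=x^t+(z^{t+1}-y^{t+1})$ gives $x^{t+1}=x^t+{\rm P}_D({\rm R}_C(x^t))-{\rm P}_C(x^t)$. Expanding $T_{C,D}=\tfrac12(I+R_DR_C)$ via $R_C=2{\rm P}_C-I$ and $R_D=2{\rm P}_D-I$ shows the right-hand side equals $T_{C,D}(x^t)$. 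Hence $x^{t+1}=T_{C,D}(x^t)$ for all $t$, which is exactly the iteration treated in Corollary~\ref{cor:1}.

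Second, I would check the structural hypotheses of Corollary~\ref{cor:1}. The operator $T_{C,D}$ is firmly nonexpansive (as noted immediately after its definition in \eqref{eq:two set DR}), and by Definition~\ref{def:expansivity prop} the firmly nonexpansive maps are exactly the $\tfrac12$-averaged ones; thus $T_{C,D}$ is $\alpha$-averaged with $\alpha=\tfrac12\in(0,1)$. The assumption $C\cap D\neq\emptyset$ guarantees ${\rm Fix}\,T_{C,D}\neq\emptyset$: indeed, any $z\in C\cap D$ satisfies ${\rm P}_C z={\rm P}_D z=z$, whence $R_DR_Cz=z$ and $T_{C,D}z=z$, so that $C\cap D\subseteq{\rm Fix}\,T_{C,D}$. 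The remaining hypothesis---bounded H\"older regularity of $T_{C,D}$ (respectively with uniform exponent $\gamma\in(0,1]$)---is exactly what the corollary assumes.

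With these ingredients in hand, I would simply invoke Corollary~\ref{cor:1} with $T=T_{C,D}$ to conclude that $x^t\to\bar x\in{\rm Fix}\,T_{C,D}$ at least with a sublinear rate $O(t^{-\rho})$, and, under the uniform-exponent hypothesis, with the sharpened rate $M\,t^{-\gamma/(2(1-\gamma))}$ for $\gamma\in(0,1)$ and $M\,r^t$ for $\gamma=1$. Since every step is either an algebraic identity or a citation of an already-established result, there is no genuine obstacle; the only point requiring a moment's care is the algebraic verification that \eqref{scheme} coincides with the fixed-point iteration of $T_{C,D}$, and this is a short direct computation.
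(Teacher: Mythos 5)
Your proposal is correct and follows essentially the same route as the paper: the paper applies Theorem~\ref{thm:abstract convergence rate} directly with $J=\{1\}$, $T_1=T_{C,D}$ and $w_{t,1}\equiv 1$, which is exactly the content of Corollary~\ref{cor:1} that you invoke, after noting that $T_{C,D}$ is firmly nonexpansive and hence $\tfrac12$-averaged. Your additional verifications (that \eqref{scheme} is the fixed-point iteration of $T_{C,D}$, and that $C\cap D\subseteq{\rm Fix}\,T_{C,D}$ so the fixed point set is non-empty) are accurate and only make explicit what the paper states in passing.
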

\begin{proof}
Let $J=\{1\}$,  $T_1 = T_{C,D}$ and $w_{t,1}\equiv 1$. Note that $T_{C,D}$ is firmly nonexpansive (that is, $1/2$-averaged) and any collection containing only one set has H\"older regular intersection with exponent one. Then the conclusion follows
immediately from Theorem \ref{thm:abstract convergence rate}.
\end{proof}

Similar to Proposition \ref{prop:T_DR hoelder}, if $C$ and $D$ are basic convex semi-algebraic sets, then DR algorithm exhibits at least a sublinear convergence rate.
\begin{remark}[Linear convergence of the DR algorithm for convex feasibility problems]
We note that if $H=\mathbb{R}^n$ and $C,D$ are convex sets with ${\rm ri}C \cap {\rm ri D} \neq \emptyset$, then $T_{C,D}$ is bounded linear regular,
that is, bounded H\"{o}lder regular with uniform exponent $1$. In this case, the Douglas--Rachford algorithm converges linearly  as was previously established in \cite{BNP}. Further, if $C$ and $D$ are both subspaces such that $C+D$ is closed (as is automatic in finite-dimensions), then $T_{C,D}$ is also bounded linear regular, and so, the DR algorithm converges linearly in this case as well. This was  established in \cite{BBNP}. It should be noted that \cite{BBNP} deduced the stronger result that the linear convergence rate is exactly the cosine of the Friedrichs angle.
We also remark that linear convergence of DR algorithm under the regularity condition ${\rm ri}C \cap {\rm ri D} \neq \emptyset$ may alternatively be deduced as a consequence
of the recently established local linear convergence of nonconvex DR algorithms \cite{Luke2,Phan} by specializing $C$ and $D$  to be convex sets.
\end{remark}

{To conclude this section, we consider a regularization of the DR algorithm \cite{Luke3} which converges even when the target intersection is empty where the sequence is generated by $x^{t+1}=T_{R}(x^t)$
where $T_{R}:=\beta{\rm P_C}+(1-\beta)T_{C,D}$ and $\beta \in (0,1)$.  When the target intersection $C \cap D$ is empty, this gives an example of a useful algorithm in which the two operators of interest, $P_C$ and $T_{C,D}$, have no common fixed point but can still be analyzed within our framework.
\begin{proposition}[Convergence rate of regularized DR algorithm]\label{prop:RAAR}
Let $C,D$ be two basic convex semi-algebraic sets and let $T_{C,D}$ be the Douglas--Rachford operator.  Let $\{x^t\}$ be generated by $x^{t+1}:=T_R(x^t)$
where $T_R=\beta{\rm P_C}+(1-\beta)T_{C,D}$ and  $\beta \in(0,1)$.  Then  ${\rm P}_{D}(x^t) \rightarrow \bar x_1 \in D$ and ${\rm P}_C ({\rm P}_{D}(x^t)) \rightarrow \bar x_2 \in C$
both with at least a sublinear rate $O(t^{-\rho})$ for some $\rho>0$ and $\|\bar x_1-\bar x_2\|= \di(C,D)$.
In particular, if $C,D$ are polyhedral, then  there exist $M>0$ and $r \in (0,1)$ such that
\[
\max\{\|{\rm P}_D(x^{t})-\bar x_1\|, \|{\rm P}_C ({\rm P}_{D}(x^t))-\bar x_2\|\} \le M\, r^t.
\]
\end{proposition}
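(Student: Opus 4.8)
The plan is to recognize $T_R$ as an averaged, semi-algebraic operator to which Corollary~\ref{cor:1} applies, and then to transfer the resulting convergence rate to the two shadow sequences while identifying their limits with a best approximation pair of $(C,D)$.

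First I would check that $T_R$ satisfies the hypotheses of Corollary~\ref{cor:1}. Since both ${\rm P}_C$ and $T_{C,D}$ are firmly nonexpansive (that is, $1/2$-averaged), their convex combination $T_R=\beta{\rm P}_C+(1-\beta)T_{C,D}$ is again $1/2$-averaged. Exactly as in the proof of Proposition~\ref{prop:T_DR hoelder}, the maps ${\rm P}_C$, ${\rm P}_D$ and ${\rm R}_C$ are semi-algebraic, so $T_R$ is semi-algebraic, and hence $x\mapsto\|x-T_R(x)\|^2$ is a continuous semi-algebraic function whose zero set is ${\rm Fix\,}T_R$. Provided ${\rm Fix\,}T_R\neq\emptyset$ --- which, as discussed in Remark~\ref{re:common fixed points}, is the operative assumption here and holds whenever $\di(C,D)$ is attained, in particular always in the polyhedral case since $C-D$ is then closed --- the {\L}ojasiewicz inequality ((P6) of Fact~\ref{fact:6}) gives bounded H\"older regularity of $T_R$ on every ball; when $C,D$ are polyhedral the map $x\mapsto T_R(x)-x$ is piecewise affine, so Robinson's theorem yields bounded linear regularity (uniform exponent $1$). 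Corollary~\ref{cor:1} then produces $\bar x\in{\rm Fix\,}T_R$ with $x^t\to\bar x$ at a sublinear rate $O(t^{-\rho})$ in general, and at a linear rate $Mr^t$ in the polyhedral case.

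Next I would transfer this rate to the shadows using nonexpansiveness of the projections. Since ${\rm P}_D$ is nonexpansive, $\|{\rm P}_D(x^t)-{\rm P}_D(\bar x)\|\le\|x^t-\bar x\|$, so ${\rm P}_D(x^t)\to\bar x_1:={\rm P}_D(\bar x)\in D$ at the same rate; applying the nonexpansive map ${\rm P}_C$ once more gives ${\rm P}_C({\rm P}_D(x^t))\to\bar x_2:={\rm P}_C(\bar x_1)\in C$, again at the same rate. This delivers the convergence and rate claims for both shadow sequences.

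The main obstacle is the remaining identity $\|\bar x_1-\bar x_2\|=\di(C,D)$. Here I would expand the fixed point equation $\bar x=\beta{\rm P}_C(\bar x)+(1-\beta)T_{C,D}(\bar x)$ via the identity $T_{C,D}(x)-x={\rm P}_D({\rm R}_C(x))-{\rm P}_C(x)$, which after rearrangement yields $\beta(\bar x-a)=(1-\beta)(b-a)$ with $a:={\rm P}_C(\bar x)$ and $b:={\rm P}_D({\rm R}_C(\bar x))$. The projection inclusions $\bar x-a\in N_C(a)$ and ${\rm R}_C(\bar x)-b\in N_D(b)$ then translate, after substituting ${\rm R}_C(\bar x)=2a-\bar x$, into $b-a\in N_C(a)$ and $a-b\in N_D(b)$ --- precisely the first order optimality conditions for $(a,b)$ to be a best approximation pair, so that $\|a-b\|=\di(C,D)$. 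The delicate step, which I expect to be the crux of the whole argument, is to reconcile this pair with the tracked limits $\bar x_1={\rm P}_D(\bar x)$ and $\bar x_2={\rm P}_C(\bar x_1)$: the inclusion $\bar x_1-\bar x_2\in N_C(\bar x_2)$ is automatic from $\bar x_2={\rm P}_C(\bar x_1)$, while the complementary inclusion $\bar x_2-\bar x_1\in N_D(\bar x_1)$ must be extracted from the fixed point geometry relating $\bar x$, $a$ and $b$ above; once both hold, $(\bar x_2,\bar x_1)$ is itself a best approximation pair and $\|\bar x_1-\bar x_2\|=\di(C,D)$ follows.
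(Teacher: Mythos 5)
Your overall architecture coincides with the paper's: decompose $T_R-I=\beta({\rm P}_C-I)+(1-\beta)({\rm P}_D{\rm R}_C-{\rm P}_C)$ to see it is semi-algebraic (piecewise affine in the polyhedral case), invoke {\L}ojasiewicz/Robinson for bounded H\"older (resp.\ linear) regularity, apply the main convergence theorem, and push the rate onto the shadows by nonexpansiveness of the projections. However, two genuine gaps remain. First, you condition the whole argument on ${\rm Fix\,}T_R\neq\emptyset$, offering it only ``whenever $\di(C,D)$ is attained''; the proposition carries no such hypothesis. The paper closes this by noting that $D-C$ is closed for basic convex semi-algebraic sets (\cite[Lemma~4.7]{Bor_Li_Yao}), setting $g={\rm P}_{D-C}(0)$, $E=C\cap(D-g)$, $F=(C+g)\cap D$, and citing \cite[Lemma~2.1]{Luke3} for ${\rm Fix\,}T_R=F-\tfrac{\beta}{1-\beta}g\neq\emptyset$. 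Without the closedness of $D-C$ your existence of a fixed point is unproven in the general semi-algebraic case.

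Second, and more seriously, the identity $\|\bar x_1-\bar x_2\|=\di(C,D)$ is exactly where your argument stops. Your computation that $(a,b)=({\rm P}_C(\bar x),{\rm P}_D({\rm R}_C(\bar x)))$ is a best approximation pair is correct (from $\bar x=a+\tfrac{1-\beta}{\beta}(b-a)$ one gets $b-a\in N_C(a)$ and $a-b\in N_D(b)$), but the statement concerns $\bar x_1={\rm P}_D(\bar x)$ and $\bar x_2={\rm P}_C(\bar x_1)$, and the inclusion $\bar x_2-\bar x_1\in N_D(\bar x_1)$ that you defer is precisely the content of \cite[Lemma~2.1]{Luke3} (${\rm P}_D({\rm Fix\,}T_R)\subseteq F$ and ${\rm P}_C({\rm P}_D({\rm Fix\,}T_R))\subseteq E$, whence $\bar x_2=\bar x_1-g$). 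This step is not a routine verification: taking $C=\{0\}$ and $D=\{x\in\RR^2\mid x_1\ge1\}$, the unique fixed point is $\bar x=(\tfrac{1-\beta}{\beta},0)$, and for $\beta<\tfrac12$ one has ${\rm P}_D(\bar x)=\bar x$ and ${\rm P}_C({\rm P}_D(\bar x))=0$, so $\|\bar x_1-\bar x_2\|=\tfrac{1-\beta}{\beta}>1=\di(C,D)$. Thus the inclusion you hope to ``extract from the fixed point geometry'' is sensitive to the precise roles of $C$ and $D$ in $T_R$ (the shadow compatible with the relaxation $\beta{\rm P}_C$ is ${\rm P}_C(x^t)$, not ${\rm P}_D(x^t)$), and no soft argument of the kind you sketch can supply it; one must either invoke Luke's lemma with matching conventions or carry out the normal-cone analysis for the correct shadow explicitly.
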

\begin{proof}
As $C,D$ be two basic convex semi-algebraic sets, $D-C$ is a closed set \cite[Lemma 4.7]{Bor_Li_Yao}. Let $g=P_{D-C}(0)$,
$E=C \cap (D-g)$ and $F=(C+g) \cap D$. Then, \cite[Lemma 2.1]{Luke3} shows that ${\rm Fix}T_R=F-\frac{\beta}{1-\beta}\, g$, $P_D({\rm Fix}T_R) \subseteq F$ and $P_C(P_D({\rm Fix}T_R)) \subseteq E$.  We first show that $T_R$ is bounded H\"older regular, and $T_R$ is bounded H\"older regular with uniform exponent $1$ if $C,D$ are polyhedral.
To see this, we use the same argument as in Proposition~\ref{prop:T_DR hoelder} and  it suffices to establish that $T_R-I$ is a semi-algebraic (resp. piecewise affine) map when $C$ and $D$ are semi-algebraic (resp. polyhedral). For the sake of avoiding repetition, we only show the latter. Observe that
  $$ T_R-I = \beta ({\rm P_{C}}-I)+(1-\beta)({\rm P_{D}R_{C}}-{\rm P_{C}}). $$
 Thus $T_R-I$ can be represented in terms of linear combinations and compositions of continuous semi-algebraic (resp. continuous piecewise affine) operators; more precisely, the projectors onto $C$ and $D$. Since projectors of convex semi-algebraic (resp. polyhedral) sets are semi-algebraic (resp. piecewise affine) and continuous, it follows that $T_R-I$ is semi-algebraic (resp. piecewise affine) and continuous.
It then follows from Theorem \ref{thm:abstract convergence rate} that $x^{t} \rightarrow \bar x \in {\rm Fix}T_R$ with at least a sublinear rate $O(t^{-\rho})$ for some $\rho>0$. From the non-expansive property of projection mapping,
we see that ${\rm P}_{D}(x^t) \rightarrow \bar x_1 \in P_D({\rm Fix}T_R) \subseteq F$ and ${\rm P}_C ({\rm P}_{D}(x^t)) \rightarrow \bar x_2 =P_C(\bar x_1) \subseteq E$
both with at least a sublinear rate $O(t^{-\rho})$. From the definitions of $E$, $F$ and $g$, it follows that $\|\bar x_1-\bar x_2\|=\di(C,D)$. The assertion for the linear convergence in the case where $C$ and $D$ are polyhedral also follows
from Theorem \ref{thm:abstract convergence rate}.
\end{proof}}

\section{The rate of convergence of the damped DR algorithm}\label{sec:damped DR}
 We now investigate a variant of Algorithm~1 which we refer to as the \emph{damped Douglas--Rachford algorithm}. To proceed, let $\eta>0$, let $A$ be a closed convex set in $H$, and define the operator ${\rm P}_A^\eta$ by
\[
{\rm P}_A^{\eta}= \left(\frac{1}{2\eta+1}I+\frac{2\eta}{2\eta+1} {\rm P}_A \right),
\]
where $I$ denotes the identity operator on $H$. The operator ${\rm P}_A^\eta$ can be considered as a relaxation of the projection mapping. Further, a direct verification shows that
\[
\lim_{\eta \rightarrow \infty} {\rm P}_A^{\eta}(x)={\rm P}_A(x) \quad\forall x \in H,
\]
in norm, and
\begin{equation}\label{eq:96}
{\rm P}_A^{\eta}(x)={\rm prox}_{\eta \di_A^2}(x)=\argmin_{y \in H} \left\{\di_A^2(y)+\frac{1}{2\eta}\|y - x\|^2\right\}\quad\forall x \in H,
\end{equation}
where  ${\rm prox}_f$ denotes the \emph{proximity operator} of the function $f$. The damped variant can be stated as follows:

\begin{center}
\begin{algorithm}[H]\label{alg:damped DR}
 \caption{Damped Douglas--Rachford algorithm}
 \KwData{Two closed sets $C,D\subseteq H$}
 Choose $\eta>0$ and $\lambda \in (0,2]$;\\
 Choose an initial point $x^0\in H$\;
 \For{$t=0,1,2,3,\dots$}{
  Choose $\lambda_t \in (0,2]$ with $\lambda_t\geq\lambda$ and set:
  \begin{equation}\label{scheme-damped}
	\left\{
	\begin{aligned}
	&y^{t+1}:= {\rm P}_C^{\eta}(x^t), \\
	&z^{t+1}:= {\rm P}_D^{\eta}(2y^{t+1}-x^t),\\
	&x^{t+1}:=x^t+\lambda_t(z^{t+1}- y^{t+1}).
	\end{aligned}
	\right.
  \end{equation}
  }
\end{algorithm}
\end{center}

\begin{remark}
In the more general setting in which  $P_C^{\eta}$ and $P_D^{\eta}$ are, respectively, replaced by ${\rm prox}_{\eta f}$ and ${\rm prox}_{\eta g}$ for $f$ and $g$ proper lower semicontinuous convex functions,  Algorithm~\ref{alg:damped DR} can be found, for instance, in \cite[Cor.~27.4]{BC2011}, \cite[Alg.~1.8]{Luke3} and \cite{Combettes1}. Convergence of this algorithm (without an explicit estimate of the convergence rate) has been established in \cite[Cor.~1.11]{Luke3} and
\cite[Cor.~5.2]{Combettes1}.
 We also note that a similar relaxation of the Douglas--Rachford algorithm for lattice cone constraints has been proposed and analyzed in \cite{BorSimTam}.
\end{remark}

 Whilst it is possible to analyze the damped Douglas--Rachford algorithm within the quasi-cyclic framework, we learn more by proving the following result  directly.

\begin{theorem}[Convergence Rate for the  damped Douglas--Rachford algorithm] \label{th:01}
Let $C,D$ be two closed and convex sets in a Hilbert space $H$ with $C \cap D \neq \emptyset$.  Let $\lambda:=\inf_{t \in \mathbb{N}} \lambda_t >0$ with
$\lambda_t \in (0,2]$ and let  $\{(y^t,z^t,x^t)\}$ be generated by the damped Douglas--Rachford algorithm \eqref{scheme-damped}. Suppose that the pair of sets $\{C,D\}$ has a bounded H\"older regular intersection.
Then  $x^{t} \rightarrow \bar x \in C\cap D$ with at least a sublinear rate $O(t^{-\rho})$ for some $\rho>0$. Furthermore, if the pair $\{C,D\}$ has a bounded H\"older regular intersection with uniform exponent $\gamma\in(0,1]$ then there exist $M>0$ and $r \in (0,1)$ such that
\[
\|x^{t}-\bar x\| \le \left\{\begin{array}{ccl}
M t^{-\frac{\gamma}{2(1-\gamma)}} & \mbox{ if } & \gamma \in (0,1) \\
                                    M \, r^t & \mbox{ if } & \gamma=1.
                       \end{array}
\right.
\]
\end{theorem}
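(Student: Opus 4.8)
The plan is to prove the result directly, following the blueprint of Proposition~\ref{prop:convergence rate}: I will show that $\{x^t\}$ is Fej\'er monotone with respect to $C\cap D$ and that the distances obey the one-step recurrence $\di^2(x^{t+1},C\cap D)\le \di^2(x^t,C\cap D)-\delta\,\di^{2\theta}(x^t,C\cap D)$ with $\theta=1/\gamma\ge 1$, after which Proposition~\ref{prop:convergence rate} (applied with step length $s=1$ and $F:=C\cap D$) delivers both the convergence $x^t\to\bar x\in C\cap D$ and the stated rates for $\|x^t-\bar x\|$. The structural fact underpinning everything is that ${\rm P}_A^\eta={\rm prox}_{\eta\,\di_A^2}$, so that $x-{\rm P}_A^\eta(x)=2\eta\big({\rm P}_A^\eta(x)-{\rm P}_A({\rm P}_A^\eta(x))\big)$ and $\|x-{\rm P}_A^\eta(x)\|=\tfrac{2\eta}{2\eta+1}\di(x,A)$; these identities are the bridge between the operator displacements and the set distances $\di(\cdot,C)$, $\di(\cdot,D)$ that feed the H\"older regularity hypothesis.

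The crux is a descent inequality. Writing $r^{t+1}:=2y^{t+1}-x^t$, $u:=y^{t+1}-{\rm P}_C(y^{t+1})$ and $v:=z^{t+1}-{\rm P}_D(z^{t+1})$, the resolvent identities give $x^t-y^{t+1}=2\eta u$ and $r^{t+1}-z^{t+1}=2\eta v$, whence $z^{t+1}-y^{t+1}=-2\eta(u+v)$ and therefore
\[
x^{t+1}=x^t-2\eta\lambda_t(u+v).
\]
Fix $w\in C\cap D$. The projection characterizations yield $\langle u,\,y^{t+1}-w\rangle\ge\|u\|^2$ and $\langle v,\,z^{t+1}-w\rangle\ge\|v\|^2$. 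Expanding $\|x^{t+1}-w\|^2$ with the displayed identity and substituting these two estimates, the coefficient of the indefinite term $\langle u,v\rangle$ turns out to be $8\eta^2\lambda_t(\lambda_t-2)$, which is nonpositive precisely because $\lambda_t\le 2$; absorbing it via $|\langle u,v\rangle|\le\tfrac12(\|u\|^2+\|v\|^2)$ collapses the whole expression to
\[
\|x^{t+1}-w\|^2\le\|x^t-w\|^2-4\eta\lambda\big(\|u\|^2+\|v\|^2\big).
\]
Since $\|u\|=\tfrac{1}{2\eta+1}\di(x^t,C)$ and $\|v\|=\tfrac{1}{2\eta+1}\di(r^{t+1},D)$, this reads $\|x^{t+1}-w\|^2\le\|x^t-w\|^2-\kappa\big(\di^2(x^t,C)+\di^2(r^{t+1},D)\big)$ with $\kappa:=4\eta\lambda/(2\eta+1)^2>0$. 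In particular $\{x^t\}$ is Fej\'er monotone with respect to $C\cap D$, hence bounded, and so is $\{r^{t+1}\}$.

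To pass from the descent inequality to the recurrence I take $w={\rm P}_{C\cap D}(x^t)$, so that the left-hand side dominates $\di^2(x^{t+1},C\cap D)$ and the first term equals $\di^2(x^t,C\cap D)$. The two set-distance terms are anchored at different points, so I first transfer the $D$-distance back to $x^t$ using $\di(x^t,D)\le\di(r^{t+1},D)+\|x^t-r^{t+1}\|=\di(r^{t+1},D)+\tfrac{4\eta}{2\eta+1}\di(x^t,C)$, which gives $\max\{\di(x^t,C),\di(x^t,D)\}\le C_0\max\{\di(x^t,C),\di(r^{t+1},D)\}$ for a constant $C_0$. Applying the bounded H\"older regularity of $\{C,D\}$ on a bounded set $K$ containing all iterates then yields $\di(x^t,C\cap D)\le\beta\,\max\{\di(x^t,C),\di(x^t,D)\}^\gamma\le\beta C_0^\gamma\max\{\di(x^t,C),\di(r^{t+1},D)\}^\gamma$; inverting and squaring gives $\di^2(x^t,C)+\di^2(r^{t+1},D)\ge c\,\di^{2/\gamma}(x^t,C\cap D)$. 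Combined with the descent inequality this is exactly the recurrence with $\delta:=\kappa c$ and $\theta:=1/\gamma\ge 1$, and both assertions follow from Proposition~\ref{prop:convergence rate} (in the non-uniform case $\gamma$ depends on $K$ but remains positive, giving the qualitative sublinear rate $O(t^{-\rho})$).

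The main obstacle is the descent inequality, and within it the indefinite cross term $\langle u,v\rangle$: the algebra only closes because the coefficient $8\eta^2\lambda_t(\lambda_t-2)$ is nonpositive on the whole admissible range $\lambda_t\in(0,2]$. This is exactly where the damping pays off — replacing ${\rm P}_A$ by the strictly more averaged ${\rm P}_A^\eta$ is what permits the endpoint $\lambda_t=2$, which is the delicate case for undamped Douglas--Rachford. A secondary technical point is that $\di(\cdot,C)$ and $\di(\cdot,D)$ are evaluated at the two different points $x^t$ and $r^{t+1}$, so the triangle-inequality transfer to a common evaluation point must be carried out before the H\"older regularity hypothesis, which requires a single argument, can be invoked.
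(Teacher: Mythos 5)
Your proof is correct and follows the same overall architecture as the paper's: a Fej\'er-type descent inequality controlling $\di^2(y^{t+1},C)+\di^2(z^{t+1},D)$, the observation that $y^{t+1}$ (resp.\ $z^{t+1}$) lies on the segment joining $x^t$ (resp.\ $2y^{t+1}-x^t$) to its projection so that these quantities equal $\tfrac{1}{(2\eta+1)^2}\di^2(x^t,C)$ and $\tfrac{1}{(2\eta+1)^2}\di^2(2y^{t+1}-x^t,D)$, a triangle-inequality transfer of the $D$-distance back to $x^t$, and finally H\"older regularity on a bounded set containing the iterates feeding into Proposition~\ref{prop:convergence rate} with $s=1$. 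The one place where you genuinely diverge is the derivation of the descent inequality itself: the paper applies the convex subgradient inequality to $\di^2(\cdot,C)$ and $\di^2(\cdot,D)$ and then manipulates the gradient optimality conditions of the two prox subproblems, arriving at $2\eta\lambda\bigl(\di^2(y^{t+1},C)+\di^2(z^{t+1},D)\bigr)\le\|x^t-x^*\|^2-\|x^{t+1}-x^*\|^2-\tfrac{2-\lambda_t}{\lambda_t}\|x^{t+1}-x^t\|^2$, whereas you expand $\|x^{t+1}-w\|^2$ directly using $x^{t+1}=x^t-2\eta\lambda_t(u+v)$ and the obtuse-angle characterization of projections, isolating the cross term $\langle u,v\rangle$ with coefficient $8\eta^2\lambda_t(\lambda_t-2)\le 0$. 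I have verified your algebra: the coefficient bookkeeping closes and yields $\|x^{t+1}-w\|^2\le\|x^t-w\|^2-4\eta\lambda(\|u\|^2+\|v\|^2)$, which is in fact a slightly sharper constant than the paper's $2\eta\lambda$, and your version makes more transparent exactly where the hypothesis $\lambda_t\le 2$ enters (the paper uses it only at the last line of Step~1 to discard $-\tfrac{2-\lambda_t}{\lambda_t}\|x^{t+1}-x^t\|^2$). Both derivations are elementary and the remainder of your argument coincides with the paper's Step~2.
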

\begin{proof}
\emph{Step~1 (A Fej\'{e}r
monotonicity type inequality for $x^t$):} Let $x^* \in C \cap D$.
We first show that
\begin{equation}\label{claim1}
2 \eta\lambda \big(\di^2(y^{t+1},C)+\di^2(z^{t+1},D)\big) \le \|x^{t}-x^*\|^2-\|x^{t+1}-x^*\|^2.
\end{equation}
To see this, note that, for any closed and convex set $A$, $\di^2(\cdot,A)$ is a differentiable convex function satisfying
$\nabla (\di^2)(x,A)=2(x-{\rm P}_{A}(x))$ which is $2$-Lipschitz. Using the convex subgradient inequality, we have
 \begin{eqnarray} \label{eq:003}
& & 2 \eta\lambda\big(\di^2(y^{t+1},C)+\di^2(z^{t+1},D)\big) \nonumber \\
& \leq & 2 \eta\lambda_t\big(\di^2(y^{t+1},C)+\di^2(z^{t+1},D)\big) \nonumber \\
& = & 2 \eta\lambda_t \big(\di^2(y^{t+1},C)-\di^2(x^*,C)+\di^2(z^{t+1},D)-\di^2(x^*,D)\big) \nonumber \\
& \le &  4 \eta\lambda_t\big(\langle y^{t+1}-{\rm P_C}(y^{t+1}), y^{t+1}-x^*\rangle+\langle z^{t+1}-{\rm P}_D(z^{t+1}), z^{t+1}-x^*\rangle\big) \nonumber\\
& = & 4 \eta\lambda_t\big(\langle y^{t+1}-{\rm P_C}(y^{t+1}), y^{t+1}-x^*\rangle+\langle z^{t+1}-{\rm P}_D(z^{t+1}), z^{t+1}-y^{t+1}\rangle \nonumber\\
& & \ \ \ +\langle z^{t+1}-{\rm P}_D(z^{t+1}), y^{t+1}-x^*\rangle\big) \nonumber \\
& = & 4 \eta\lambda_t\big(\langle y^{t+1}-{\rm P_C}(y^{t+1})+z^{t+1}-{\rm P}_D(z^{t+1}), y^{t+1}-x^*\rangle \nonumber \\
& & +\langle z^{t+1}-{\rm P}_D(z^{t+1}), z^{t+1}-y^{t+1}\rangle )\nonumber \\
& = & 4 \eta\big(\lambda_t\langle y^{t+1}-{\rm P_C}(y^{t+1})+z^{t+1}-{\rm P}_D(z^{t+1}), y^{t+1}-x^*\rangle \label{eq:first} \\
& & +\langle z^{t+1}-{\rm P}_D(z^{t+1}), x^{t+1}-x^{t}\rangle ), \nonumber
\end{eqnarray}
where the last equality follows from the last relation in \eqref{scheme-damped}. Now using (\ref{eq:96}), we see that
 \begin{align*}
0 &=\nabla (\di^2(\cdot,C)+ \frac{1}{2\eta}\|\cdot-x^t\|^2)(y^{t+1})= 2\left( y^{t+1}-{\rm P_C}(y^{t+1})\right)+\frac{1}{\eta}(y^{t+1}-x^t),
\end{align*}
and similarly 
\begin{align*}
0 &=\nabla (\di^2(\cdot,D)+ \frac{1}{2\eta}\|\cdot-(2y^{t+1}-x^t)\|^2)(z^{t+1})  \\
&= 2\left( z^{t+1}-{\rm P_D}(z^{t+1}) \right) +\frac{1}{\eta}(z^{t+1}-2y^{t+1}+x^t).
\end{align*}
Summing these two equalities and multiplying by $\lambda_t$ yields
\[
\lambda_t\left( y^{t+1}-{\rm P_C}(y^{t+1})+z^{t+1}-{\rm P}_D(z^{t+1})\right)=-\frac{\lambda_t}{2\eta} (z^{t+1}-y^{t+1})= -\frac{1}{2\eta} (x^{t+1}-x^{t}).
\]
Note also that
 $$x^t+z^{t+1}-y^{t+1}=x^{t+1}+(1-\lambda_t)(z^{t+1}-y^{t+1})=x^{t+1}+\frac{1-\lambda_t}{\lambda_t}(x^{t+1}-x^t).$$
Substituting the last two equations into \eqref{eq:003} gives
\begin{align*}
& 2\eta\lambda\big(\di^2(y^{t+1},C)+\di^2(z^{t+1},D)\big) \\
& \quad\le   4 \eta \langle z^{t+1}-{\rm P}_D(z^{t+1})- \frac{1}{2\eta} (y^{t+1}-x^*) , x^{t+1}-x^{t} \rangle \\
& \quad=  4 \eta\langle-\frac{1}{2\eta}(z^{t+1}-2y^{t+1}+x^t)- \frac{1}{2\eta} (y^{t+1}-x^*) , x^{t+1}-x^{t} \rangle \\
& \quad=  -2 \langle z^{t+1}-y^{t+1}+x^t-x^*, x^{t+1}-x^{t} \rangle \\
& \quad=  -2 \langle x^{t+1}-x^*, x^{t+1}-x^{t} \rangle -2\frac{1-\lambda_t}{\lambda_t}\|x^{t+1}-x^t\|^2\\
& \quad= \left( \|x^{t}-x^*\|^2-\|x^{t+1}-x^*\|^2-\|x^{t+1}-x^{t}\|^2\right) -2\frac{1-\lambda_t}{\lambda_t}\|x^{t+1}-x^t\|^2 \\
& \quad=  \|x^{t}-x^*\|^2-\|x^{t+1}-x^*\|^2 - \frac{2-\lambda_t}{\lambda_t}\|x^{t+1}-x^t\|^2.
\end{align*}

\noindent\emph{Step~2 (establishing a recurrence for $\di^2 (x^t,{C \cap D})$):} First note that
\[
y^{t+1}= {\rm P}_C^{\eta}(x^t)= \frac{1}{2\eta+1}x^t +\frac{2\eta}{2\eta+1} {\rm P}_C(x^t).
\]
This shows that $y^{t+1}$ lies in the line segment between $x^t$ and its projection onto $C$. So, $P_C(y^{t+1})={\rm P}_C(x^t)$ and hence,
\begin{align*}
\di^2(y^{t+1},C) = \|y^{t+1}- {\rm P}_C(x^t)\|^2
&= \left(\frac{1}{2\eta+1}\right)^{2} \|{\rm P}_C(x^t)-x^t\|^2 \\
&= \left(\frac{1}{2\eta+1}\right)^{2}\,\di^2 (x^t,C).
\end{align*}
Similarly, as
\[
z^{t+1}= {\rm P}_D^{\eta}(2y^{t+1}-x^t)=\frac{1}{2\eta+1}(2y^{t+1}-x^t) +\frac{2\eta}{2\eta+1} {\rm P}_D(2y^{t+1}-x^t),
\]
 the point $z^{t+1}$ lies in the line segment between $2y^{t+1}-x^t$ and its projection onto $D$. Thus $P_D(z^{t+1})={\rm P}_D(2y^{t+1}-x^t)$ and so,
\begin{align*}
\di^2(z^{t+1},D) &= \|z^{t+1}-{\rm P}_D(2y^{t+1}-x^t)\|^2 \\
&= \left(\frac{1}{2\eta+1}\right)^{2} \|{\rm P}_D(2y^{t+1}-x^t)-(2y^{t+1}-x^t)\|^2 \\
&= \left(\frac{1}{2\eta+1}\right)^{2} \di^2 (2y^{t+1}-x^t,D).
\end{align*}
Now, using the non-expansiveness of $\di(\cdot,D)$, \ we have
\begin{eqnarray*}
\di^2(x^{t},D) & \le &  \bigg(\|x^{t}-(2y^{t+1}-x^t)\|+ \di(2y^{t+1}-x^t,D)\bigg)^2 \\
& = & \bigg(2\|x^{t}-y^{t+1}\|+ \di(2y^{t+1}-x^t,D)\bigg)^2 \\
& = & \bigg(\frac{4\eta}{2\eta+1}\di(x^t,C) + \di(2y^{t+1}-x^t,D)\bigg)^2 \\
& \le &  c \, \bigg(\di^2(x^t,C) + \di^2(2y^{t+1}-x^t,D)\bigg),
\end{eqnarray*}
where $c:=2(\max\{\frac{4\eta}{2\eta+1},1\})^2$, and where  the last inequality above follows from the following elementary inequalities: for all $\alpha,x,y \in \mathbb{R}_+$,
 $$\alpha x+y\leq \max\{\alpha,1\}(x+y),\quad (x+y)^2 \le 2(x^2+y^2).$$
Therefore, we have
\[
\di^2(2y^{t+1}-x^t,D) \ge c^{-1}\di^2(x^{t},D)- \di^2(x^t,C).
\]
So, using \eqref{claim1}, we have
\begin{eqnarray*}
\|x^{t}-x^*\|^2-\|x^{t+1}-x^*\|^2 & \ge & 2\eta\lambda\big(\di^2(y^{t+1},C)+\di^2(z^{t+1},D)\big) \\
& = & 2\eta\lambda\left(\frac{1}{2\eta+1}\right)^{2}\bigg( \di^2 (x^t,C)+ \di^2 (2y^{t+1}-x^t,D)\bigg). \\
\end{eqnarray*}
Note that
\begin{align*}
\di^2 (x^t,C)+ \di^2 (2y^{t+1}-x^t,D) 
&\ge   \di^2 (x^t,C)+ c^{-1}\di^2(x^{t},D)-  \di^2(x^t,C) \\
&= c^{-1}\di^2(x^{t},D)
\end{align*}
and
\[
\di^2 (x^t,C)+ \di^2 (2y^{t+1}-x^t,D) \ge \di^2 (x^t,C).
\]
It follows that
\begin{eqnarray}\label{eq:Recurrence}
\|x^{t}-x^*\|^2-\|x^{t+1}-x^*\|^2  &\ge &  2\eta \left(\frac{1}{2\eta+1}\right)^{2} c^{-1} \max\{\di^2 (x^t,C),\di^2 (x^t,D)\} \nonumber \\
&=&  2\eta \left(\frac{1}{2\eta+1}\right)^{2} c^{-1} (\max\{\di (x^t,C),\di (x^t,D)\})^2.
\end{eqnarray}
In particular, we see that the sequence $\{x^{t}\}$ is bounded and Fej\'{e}r
monotone with respect to $C \cap D$.
Thence,  letting $K$ be a bounded set containing $\{x^t\}$, by bounded H\"{o}lder regularity of $\{C,D\}$, there exists $\mu>0$ and $\gamma\in(0,1]$ such that
 $$\di(x,{C\cap D})\leq \mu\max\{\di(x,C),\di(x,D)\}^\gamma\quad\forall x\in K.$$
Thus there exists a $\delta>0$ such that
\[
\|x^{t}-x^*\|^2-\|x^{t+1}-x^*\|^2  \ge \delta\, \di^{2\theta} (x^t,{C \cap D})
\]
where  $\theta=\frac{1}{\gamma} \in [1,\infty)$. So, Fact \ref{FactPr:3} implies that ${\rm P}_{C \cap D} (x^t) \rightarrow \bar x$ for some $\bar x \in C \cap D$. Setting $x^*={\rm P}_{C \cap D} (x^t)$ in \eqref{eq:Recurrence} we therefore obtain
\[
\di^2 (x^{t+1},{C \cap D}) \le \di^2 (x^t,{C \cap D})-\delta \di^{2\theta} (x^t,{C \cap D}).
\]
Now, the conclusion follows by applying Proposition \ref{prop:convergence rate} with $\theta=1/\gamma$.
\end{proof}

\begin{remark}[DR versus damped DR]
 Note that Theorem~\ref{th:01} only requires H\"older regularity of the underlying collection of constraint sets,  rather than the damped DR operator explicitly. A careful examination of the proof of Theorem~\ref{th:01} shows that the inequality \eqref{eq:first} does not hold for the basic DR algorithm (which would require setting $\eta=+\infty$).
\end{remark}

\begin{remark}[Comments on linear convergence]
In the case when $0 \in {\rm sri}(C-D)$, where ${\rm sri}$ is the \emph{strong relative interior},
then the H\"{o}lder regularity result holds with exponent
$\gamma=1$ (see \cite{BB}). The preceding proposition therefore implies that the damped Douglas--Rachford method converges linearly in the case where
 $0 \in {\rm sri}(C-D)$.
\end{remark}

We next show  that an explicit sublinear convergence rate estimate can be achieved in the case where $H=\mathbb{R}^n$ and $C,D$ are convex basic semi-algebraic sets.

\begin{theorem}[Convergence rate for the damped DR algorithm with semi-algebraic sets] \label{TheMTR:3}
Let $C,D$ be two basic convex semi-algebraic sets in $\mathbb{R}^n$ with $C\cap D \neq \emptyset$, where $C,D$ are given by
\[
C:=\{x \in \mathbb{R}^n \mid g_i(x) \le 0, i=1,\ldots,m_1\} \mbox{ and } D:=\{x \in \mathbb{R}^n \mid h_j(x) \le 0, j=1,\ldots,m_2\}
\]
where $g_i,h_j$, $i=1,\ldots,m_1$, $j=1,\ldots,m_2$, are convex polynomials on $\mathbb{R}^n$ with degree at most $d$.
Let $\lambda:=\inf_{t \in \mathbb{N}} \lambda_t >0$ with $\lambda_t \in (0,2]$ and let  $\{(y^t,z^t,x^t)$ be generated by the damped Douglas--Rachford algorithm \eqref{scheme-damped}.
Then, $x^{t} \rightarrow \bar x \in C\cap D$. Moreover, there exist $M>0$ and $r \in (0,1)$ such that
\[
\|x^{t}-\bar x\| \le \left\{\begin{array}{ccl}
M t^{-\frac{\gamma}{2(1-\gamma)}} & \mbox{ if } & d>1 \\
                                    M \, r^t & \mbox{ if } & d=1.
                       \end{array}
\right.
\]
where $\gamma=[\min\left\{\frac{(2d-1)^n+1}{2},\, B(n-1)d^n\right\}]^{-1}$ and $\beta(n-1)$, is the central binomial coefficient with respect to $n-1$
which is given by $\binom{n-1}{[(n-1)/2]}$.
\end{theorem}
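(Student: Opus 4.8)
The plan is to recognize Theorem~\ref{TheMTR:3} as a direct specialization of Theorem~\ref{th:01}: all that must be supplied is the bounded H\"older regular intersection hypothesis, and this is provided, with an explicit uniform exponent, by the semi-algebraic structure of $C$ and $D$ through Lemma~\ref{ThesumSet:1}. I would therefore spend the proof verifying the regularity of $\{C,D\}$ and then matching the resulting exponent against the dichotomy on the degree $d$.

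First I would show that $\{C,D\}$ has a bounded H\"older regular intersection with uniform exponent $\gamma = [\min\{((2d-1)^n+1)/2,\, B(n-1)d^n\}]^{-1}$. Fix an arbitrary bounded set $K \subseteq \RR^n$ and let $\overline{K}$ be its (compact) closure. Applying Lemma~\ref{ThesumSet:1} to the two sets $C, D$ with parameter $\theta = 1$ on the compact set $\overline{K}$ produces a constant $c > 0$ with
\[
 \di(x, C \cap D) \le c\left(\di(x,C) + \di(x,D)\right)^{\gamma} \quad \forall x \in \overline{K}.
\]
Since $\di(x,C) + \di(x,D) \le 2\max\{\di(x,C), \di(x,D)\}$, this gives
\[
 \di(x, C \cap D) \le (c\,2^{\gamma})\left(\max\{\di(x,C),\di(x,D)\}\right)^{\gamma} \quad \forall x \in K,
\]
which is exactly the inequality of Definition~\ref{def:holder regular} with $\beta := c\,2^{\gamma}$. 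The key point is that the exponent $\gamma$ delivered by Lemma~\ref{ThesumSet:1} depends only on $n$ and $d$ and not on $K$, so the regularity holds with a genuinely \emph{uniform} exponent.

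With this in hand, I would invoke Theorem~\ref{th:01}, whose conclusion immediately yields $x^t \to \bar x \in C \cap D$ together with the rate controlled by $\gamma$: the sublinear estimate $\|x^t - \bar x\| \le M t^{-\gamma/(2(1-\gamma))}$ when $\gamma \in (0,1)$, and the linear estimate $\|x^t - \bar x\| \le M r^t$ when $\gamma = 1$. It then remains only to translate the dichotomy on $\gamma$ into the stated dichotomy on $d$. When $d = 1$ every $g_i, h_j$ is affine and $(2d-1)^n = 1$, so $((2d-1)^n+1)/2 = 1$ while $B(n-1)d^n = B(n-1) \ge 1$; hence the minimum equals $1$ and $\gamma = 1$, producing the linear rate. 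When $d > 1$ both terms inside the minimum strictly exceed $1$, so $\gamma \in (0,1)$ and the sublinear rate results.

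The argument is essentially routine once Lemma~\ref{ThesumSet:1} is identified as the right tool; the only steps demanding a little care are the passage from the sum-of-distances form of the lemma to the max-of-distances form of Definition~\ref{def:holder regular}, and the observation that the uniform exponent transfers from the compact closure $\overline{K}$ to the arbitrary bounded set $K$ because $\gamma$ is independent of $K$. The genuinely hard content --- the explicit derivation of $\gamma$ for intersections of sublevel sets of convex polynomials --- has already been carried out in the cited Lemma~\ref{ThesumSet:1}, so here it is merely applied.
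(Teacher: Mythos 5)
Your proposal is correct and follows essentially the same route as the paper: apply Lemma~\ref{ThesumSet:1} with $\theta=1$, pass from the sum of distances to the max via the factor $2^{\gamma}$ to obtain the bounded H\"older regular intersection with uniform exponent $\gamma$, observe that $\gamma=1$ exactly when $d=1$, and conclude by Theorem~\ref{th:01}. The only addition is your explicit remark about taking the compact closure of the bounded set, which the paper leaves implicit.
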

\begin{proof}
By Lemma~\ref{ThesumSet:1} with $\theta=1$, we see that for any compact set $K$,  there exists  $c> 0$ such that for all $x \in K$,
\begin{eqnarray*}
\di (x,C \cap D) & \le & c \bigg( \di(x, C)+\di(x,D) \bigg)^{\gamma}
 \le  2^{\gamma} c\, \max\{ \di(x,C),\di(x,D)\}^{\gamma}.
\end{eqnarray*}
where $\gamma=[\min\left\{\frac{(2d-1)^n+1}{2},\, B(n-1)d^n\right\}]^{-1}$. Note that $\gamma=1$ if $d=1$; while $\gamma \in (0,1)$ if $d>1$. The conclusion now follows from Theorem~\ref{th:01}.\end{proof}
\begin{remark}
{ Let $C,D$ be two basic convex semi-algebraic sets in $\mathbb{R}^n$ with $C\cap D \neq \emptyset$ and consider the associated convex feasibility problem: find $x^* \in C \cap D$.
As an easy consequence of Theorem \ref{TheMTR:3}, we see that a solution with $\epsilon$-tolerance of the convex feasibility problem, the number of iterations needed of the
damped DR algorithm is at worst $O(\frac{1}{\sqrt[\rho]{\epsilon}})$  where $\rho:=\frac{\gamma}{2(1-\gamma)}$ and $\gamma$ is a constant given in Theorem \ref{TheMTR:3} that can be explicitly
determined.}
\end{remark}

\section{Two Examples}\label{sec:examples}
 In this section we fully examine two concrete problems which illustrate the difficulty of establishing optimal rates. {In addition to illustrating our approach, these examples also give some further insight into the sharpness of our derived qualitative behavior.} We begin with an example consisting of two sets having an intersection which is bounded H\"older regular but not bounded linearly regular. In the special case where $n=1$, it has previously been examined in detail as part of \cite[Ex.~5.4]{Heinz_0}.

\begin{example}[Half-space and epigraphical set described by $\|x\|^d$]
Consider the sets  $$C=\{(x,r)\in \mathbb{R}^{n} \times \mathbb{R} \mid r \le 0\} \mbox{~and~}
D=\{(x,r) \in \mathbb{R}^{n} \times \mathbb{R} \mid r \ge \|(x_1,\ldots,x_n)\|^d\},$$ where $d>0$ is an even number. Clearly,
$C \cap D=\{0_{\mathbb{R}^{n+1}}\}$ and ${\rm ri}C \cap {\rm ri} D=\emptyset$. It can be directly verified that $\{C,D\}$ does not has a bounded linearly
regular intersection because, for $x_k:=(\frac{1}{k},0,\ldots,0)\in \mathbb{R}^n$ and $r_k:=\frac{1}{k^d}$,
\[
\di \big((x_k,r_k),C\cap D\big)=O\left(\frac{1}{k}\right) \mbox{ and } \max\{\di\big((x_k,r_k),C\big),\di\big((x_k,r_k),D\big)\}=\frac{1}{k^d}.
\]

Let $T_{C,D}$ be the Douglas--Rachford operator with
respect to the sets $C$ and $D$.
We will verify that $T_{C,D}$ is bounded  H\"{o}lder regular with exponent $\frac{1}{d}$.
Granting this,  by Corollary~\ref{TheMTR:6}, the sequence $(x^t,r^t)$ generated by the Douglas--Rachford algorithm converges to a point in ${\rm Fix\,}T_{C,D}=\{0_{\mathbb{R}^n}\} \times \mathbb{R}_+$ at least at the order of $t^{\frac{-1}{2(d-1)}}$, regardless of the chosen initial point

Firstly, on the route to showing bounded H\"{o}lder regularity,    it can be verified (see also \cite[Cor.~3.9]{bauschke2004finding}) that
\[
{\rm Fix\,}T_{C,D}=C \cap D+ N_{\overline{C-D}}(0)=\{0_{\mathbb{R}^n}\} \times \mathbb{R}_+,
\]
and so,
$$\di((x,r),{\rm Fix\,}T)=\left\{\begin{array}{ccc}
\|x\| & \mbox{ if } & r\ge 0 \\
\|(x,r)\| & \mbox{ if } & r< 0.
\end{array}\right.$$
Moreover, for all $(x,r) \in \mathbb{R}^{n} \times \mathbb{R}$,
\[
(x,r)-T_{C,D}(x,r)={\rm P}_{D}({\rm R}_{C}(x,r))-{\rm P}_C(x,r)= {\rm P}_{D}(x,-|r|) -(x, \min\{r,0\}).
\]
Note that, for any $(z,s) \in \mathbb{R}^{n} \times \mathbb{R}$, denote $(z^+,s^+)=P_D(z,s)$. Then we have
\[
s^+ =\|z^+\|^d \mbox{ and } (z^+ - z)+d \|z^+\|^{d-2} \big(\|z^{+}\|^d-s\big)z^+=0.
\]
Let $(a,\gamma)={\rm P}_{D}(x,-|r|)$. Then, $a=0_{\mathbb{R}^n}$ if and only if $x=0_{\mathbb{R}^n}$,
\[
a-x= -d \|a\|^{d-2}(\|a\|^d+|r|)a \mbox{ and } \gamma=\|a\|^{d}.
\]
It follows that
\begin{equation}\label{eq:pp}
a =\frac{1}{1+d\|a\|^{2d-2}+d\|a\|^{d-2}|r|}x.
\end{equation}
So,
\begin{eqnarray*}
 (x,r)-T_{C,D}(x,r) &= & (-d \|a\|^{d-2}(\|a\|^d+|r|)a, \|a\|^d-\min\{r,0\})\\
 &= & \begin{cases}
(-d \|a\|^{d-2}(\|a\|^d+r)a, \|a\|^d) & \mbox{ if }  r\ge 0 \\
(-d \|a\|^{d-2}(\|a\|^d-r)a, \|a\|^d-r) & \mbox{ if }  r< 0.
\end{cases}
\end{eqnarray*}
Let $K$ be any bounded set of $\mathbb{R}^{n+1}$ and consider any $(x,r) \in K$. By the nonexpansivity  of the projection mapping, $(a,\gamma)={\rm P}_{D}(x,-|r|)$ is also bounded for any $(x_1,x_2)\in K$. Let $M>0$ be such that $\|(a,\gamma)\| \le M$ and $\|(x,r)\| \le M$ for all $(x,r) \in K$.
 To verify the bounded H\"older regularity, we divide the discussion into two cases depending on the sign of $r$.

  \emph{Case 1 $(r \ge 0)$:}~As $d$ is  even, it follows that for all $(x,r) \in K$ with $x \neq 0_{\mathbb{R}^n}$
\begin{eqnarray*}
\frac{\|(x,r)-T_{C,D}(x,r)\|^2}{\|x\|^{2d}} 
& = & \frac{d^2 \|a\|^{2(d-1)}(\|a\|^d+r)^2+\|a\|^{2d}}{\|x\|^{2d}}\\
& \ge  & \frac{\|a\|^{2d}}{\|x\|^{2d}} 
 =  \bigg(\frac{1}{1+d\|a\|^{2d-2}+d\|a\|^{d-2}|r|}\bigg)^{2d} \\
 & \ge &  \bigg(\frac{1}{1+dM^{2d-2}+dM^{d-1}}\bigg)^{2d},
\end{eqnarray*}
where the equality follows from (\ref{eq:pp}).
This shows that, for all $(x,r) \in K$,
\[
\di((x,r),{\rm Fix\,}T_{C,D}) \le (1+dM^{2d-2}+dM^{d-1}) \|(x,r)-T_{C,D}(x,r)\|^{\frac{1}{d}}.
\]

 \emph{Case 2 $(r < 0)$:}
 As $d$ is  even, it follows that for all $(x,r) \in K \backslash\{0_{\mathbb{R}^{n+1}}\}$,
\begin{eqnarray*}
\frac{\|(x,r)-T_{C,D}(x,r)\|^2}{\|x\|^{2d}+r^{2d}} &=& \frac{(1+d^2 \|a\|^{2(d-1)})(\|a\|^{d}-r)^2}{\|x\|^{2d}+r^{2d}} \\
& \ge  & \frac{\|a\|^{2d}+r^2}{\|x\|^{2d}+r^{2d}} 
 \ge  \frac{\|a\|^{2d}+r^{2d}M^{2-2d}}{\|x\|^{2d}+r^{2d}} \\
& =& \frac{\bigg(\frac{1}{1+d\|a\|^{2d-2}+d\|a\|^{d-2}|r|}\bigg)^{2d}\|x\|^{2d}+r^{2d}M^{2-2d}}{\|x\|^{2d}+r^{2d}} \\
& \ge & \min\{\bigg(\frac{1}{1+dM^{2d-2}+dM^{d-1}}\bigg)^{2d},M^{2-2d}\},
\end{eqnarray*}
where the equality follows from (\ref{eq:pp}).
 Therefore, there exists $\mu>0$ such that,  for all $(x,r) \in K$,
 \[
\di((x,r),{\rm Fix\,}T_{C,D}) \le \mu \|(x,r)-T_{C,D}(x,r)\|^{\frac{1}{d}}.
\]

Combining these two cases, we see that  $T_{C,D}$ is bounded  H\"{o}lder regular with exponent $\frac{1}{d}$, and so, the sequence $(x^t,r^t)$ generated by the Douglas--Rachford algorithm converges to a point in ${\rm Fix\,}T_{C,D}=\{0_{\mathbb{R}^n}\} \times \mathbb{R}_+$ at least at the order of $t^{\frac{-1}{2(d-1)}}$.

We note that, for  $n=1$, it was shown in \cite{Heinz_0} (by examining the generated DR sequence directly) that the sequence $x^t$ converges to zero at the order $t^{-\frac{1}{d-2}}$ where $d>2$. Note that
$r^t=\|x^t\|^d$. It follows that the actual convergence rate for $(x^t,r^t)$ for this example is $t^{-\frac{1}{d-2}}$ in the case $n=1$. Thus, our convergence rate estimate
for this example is not tight in the case $n=1$. On the other hand, as noted in \cite{Heinz_0}, their analysis is largely limited to the $2$-dimensional case and it is not clear how it can be extended to the higher dimensional setting.
\end{example}

We now examine an even more concrete example involving a subspace and a lower level set of a convex quadratic function in the plane.

\begin{example}[H\"older regularity of the DR operator involving a ball and a tangent line] \label{ex:ball line}
 Consider the following basic convex semi-algebraic sets in $\mathbb{R}^2$:
  \begin{equation*}
  C:=\{x\in\mathbb{R}^2 \mid x_1=0\} \mbox{~and~} D:=\{x\in\mathbb{R}^2 \mid \|x+(1,0)\|^2\leq 1\},
 \end{equation*}
 which have intersection $C\cap D=\{0\}$.
  We now show that  the DR operator $T_{C,D}$ is bounded H\"older regular. Since $C-D=[0,1]\times\mathbb{R}$, by \cite[Cor.~3.9]{bauschke2004finding}, the fixed point set is given by
   $${\rm Fix} \,T_{C,D}=C\cap D+N_{C-D}(0)=(-\infty,0]\times \{0\}.$$
 We therefore have that
   $$\di(x,{\rm Fix}\,T_{C,D})=\begin{cases}
                              \|x\| & x_1>0, \\
                              |x_2| & x_1\leq 0.
                             \end{cases}$$
  Setting $\alpha: =1/\max\{1,\|x-(1,0)\|\}$, a direct computation shows that
   \begin{equation}\label{eq:example3}
   T_{C,D}x:=\left(\frac{I+R_DR_C}{2}\right)x=\left(\alpha-1-\alpha x_1,\alpha x_2\right),\end{equation}
  and thus
   \begin{equation}\label{eq:x-Tx}
     \|x-T_{C,D}x\|^2 = \left((1-\alpha)+x_1(1+\alpha)\right)^2+\left(x_2(1-\alpha)\right)^2.
   \end{equation}

  Now, fix an arbitrary compact set $K$ and let $M>0$ such that $\|x\|\leq M$ for all $x\in K$. For all $x \in K$, there exists $m \in (0,1]$ such that $\alpha =1/\max\{1,\|x-(1,0)\|\} \in [m,1]$ for all $x \in K$. By shrinking $m$ if necessary, we may assume that
  \begin{equation}\label{eq:m}
  \frac{\sqrt{m^2+2m}}{2} \ge M\frac{m^2}{1+m}.
  \end{equation}
 We now distinguish two cases depending on $\alpha$.

  \emph{Case 1 $(\alpha=1)$:} In this case, we have
  $$\|x-(1,0)\|\leq 1 \implies \|x\|^2\leq 2x_1.$$
  In particular, this shows that $x_1\geq 0$. Now \eqref{eq:x-Tx} gives
  $$\|x-T_{C,D}x\| = 2x_1\geq \|x\|^2=\di^2(x,{\rm Fix}\,T_{C,D}).$$

 \noindent \emph{Case 2 $(\alpha<1)$:} Fix $x \in K$. In this case, we show that
    \begin{equation}\label{claim:1}
  \|x-T_{C,D}x\| \ge \frac{m^2}{2(1+m)}\|x\|^3=\frac{m^2}{2(1+m)}\di^3(x,T_{C,D}x).
  \end{equation}
  To do this, we further divide the discussion into two subcases depending on the sign of $x_1$.

  \emph{Subcase I $(x_1>0)$:} In this case, $\di(x,{\rm Fix}\,T_{C,D})=\|x\|$.
  Note that
  \begin{eqnarray*}
  \|x-T_{C,D}x\|^2 &=& \left((1-\alpha)+x_1(1+\alpha)\right)^2+\left(x_2(1-\alpha)\right)^2 \\
  & \ge & \left(x_1(1+\alpha)\right)^2+ \left(x_2(1-\alpha)\right)^2 \\
  & \ge & (m^2+2m)x_1^2+ (1-\alpha)^2 \|x\|^2,
  \end{eqnarray*}
  where the last inequality follows by the fact that $\alpha \ge m$.
  So, the elementary inequality $\sqrt{a^2+b^2} \ge (a+b)/2$ for all $a,b\ge 0$ implies that
  \begin{equation}\label{eq:use1}
  \|x-T_{C,D}x\| \ge \frac{\sqrt{m^2+2m}}{2}x_1 +\frac{1-\alpha}{2}\|x\|.
  \end{equation}
  From the definition of $\alpha$, we see that
  \[
  1-\alpha=\frac{\|x-(1,0)\|-1}{\|x-(1,0)\|} = \frac{x_1^2-2x_1+x_2^2}{\|x-(1,0)\|(\|x-(1,0)\|+1) }.
  \]
  As $m \le \alpha<1$,
  $\|x-(1,0)\| \le \frac{1}{m}$.
  So,
  \[
  1-\alpha \ge \frac{m^2}{1+m} (x_1^2-2x_1+x_2^2)=\frac{m^2}{1+m}\|x\|^2-2 \frac{m^2}{1+m} x_1.
  \]
  Then, by combining with \eqref{eq:use1}, we deduce
   \begin{align*}
    \|x-T_{C,D}x\| &\geq \frac{\sqrt{m^2+2m}}{2}x_1+\frac{1}{2}\|x\|\left(\frac{m^2}{1+m}\|x\|^2-2 \frac{m^2}{1+m} x_1\right) \\
    &= \frac{m^2}{2(1+m)}\|x\|^3 +x_1\left(\frac{\sqrt{m^2+2m}}{2}-\frac{m^2}{1+m}\|x\|\right) \\
    &= \frac{m^2}{2(1+m)}\|x\|^3 +x_1\left(\frac{\sqrt{m^2+2m}}{2}-\frac{m^2}{1+m}M\right).
   \end{align*}
  The claimed equation \eqref{claim:1} now follows from \eqref{eq:m}.

 \emph{Subcase II $(x_1\leq 0)$:} In this case, $\di(x,{\rm Fix}\,T_{C,D})=|x_2|$ and
  \begin{eqnarray*}
  \|x-T_{C,D}x\| &=& \sqrt{\left((1-\alpha)+x_1(1+\alpha)\right)^2+\left(x_2(1-\alpha)\right)^2} \\
  & \ge & (1-\alpha) x_2.
  \end{eqnarray*}
Similar to Subcase~I, we can show that
  \[
  1-\alpha \ge \frac{m^2}{1+m} (x_1^2-2x_1+x_2^2) \ge \frac{m^2}{1+m} x_2^2.
  \]
  where the last inequality follows from $x_1 \le 0$. Thus, (\ref{claim:1}) also follows in this subcase.

 Combining the two cases we have
  $$\di(x,{\rm Fix}\,T_{C,D})\leq \|x-T_{C,D}x\|^{1/3}\quad\forall x\in K.$$
 That is, $T_{C,D}$ is bounded H\"older regular with exponent $\gamma=1/3$. Therefore, for this example, Corollary \ref{cor:DR holder} implies that  the DR algorithm generated a sequence $\{x^t\}$ which converges to $\bar x \in {\rm Fix}T_{C,D}=(-\infty,0]\times \{0\}$ at least with a sublinear convergence rate of $O(\frac{1}{\sqrt[4]{t}})$. Let $x^t=(x_1^t,x_2^t)$ and $\bar x=(\bar x_1,0)$ with $\bar x_1\le 0$.
As $x^{t+1}=T_{C,D}(x^t)$, by passing to the limit in \eqref{eq:example3}, we have $\bar x_1=\bar \alpha-1-\bar \alpha \bar x_1$ where $\bar \alpha=1/\max\{1,|\bar x_1-1|\}$. If $\bar x_1<0$, then $|\bar x_1-1|>1$, and so, $\bar \alpha=1/(1-\bar x_1)$. This implies that $\bar x_1= (\bar \alpha-1)/(1+\bar \alpha)=\bar x_1/(2-\bar x_1)$ and hence,
$\bar x_1=1$ or $\bar x_1=0$ which is impossible. This shows that $\bar x_1=0$, and so,  $\{x^t\}$ converges to $\bar x=(0,0)$ at worst in a sublinear convergence rate $O(\frac{1}{\sqrt[4]{t}})$ regardless the choice of the initial points.

We now illustrate the sublinear convergence rate  by numerical simulation. To do this, we first randomly generated an initial point in $[-100,100]^2$. We then ran the DR algorithm
for this example (starting with the corresponding random starting point) whilst tracking
the value of $\sqrt[4]{t} \, \|x^t-\bar x\|$ and $\frac{-{\log(\|x^t-\bar x\|)}}{\log(t)}$. The experiment was repeated 200 times, and the results plotted in Figure~\ref{fig}.

\begin{figure}[htb]
\begin{center}
 \includegraphics[scale=0.65]{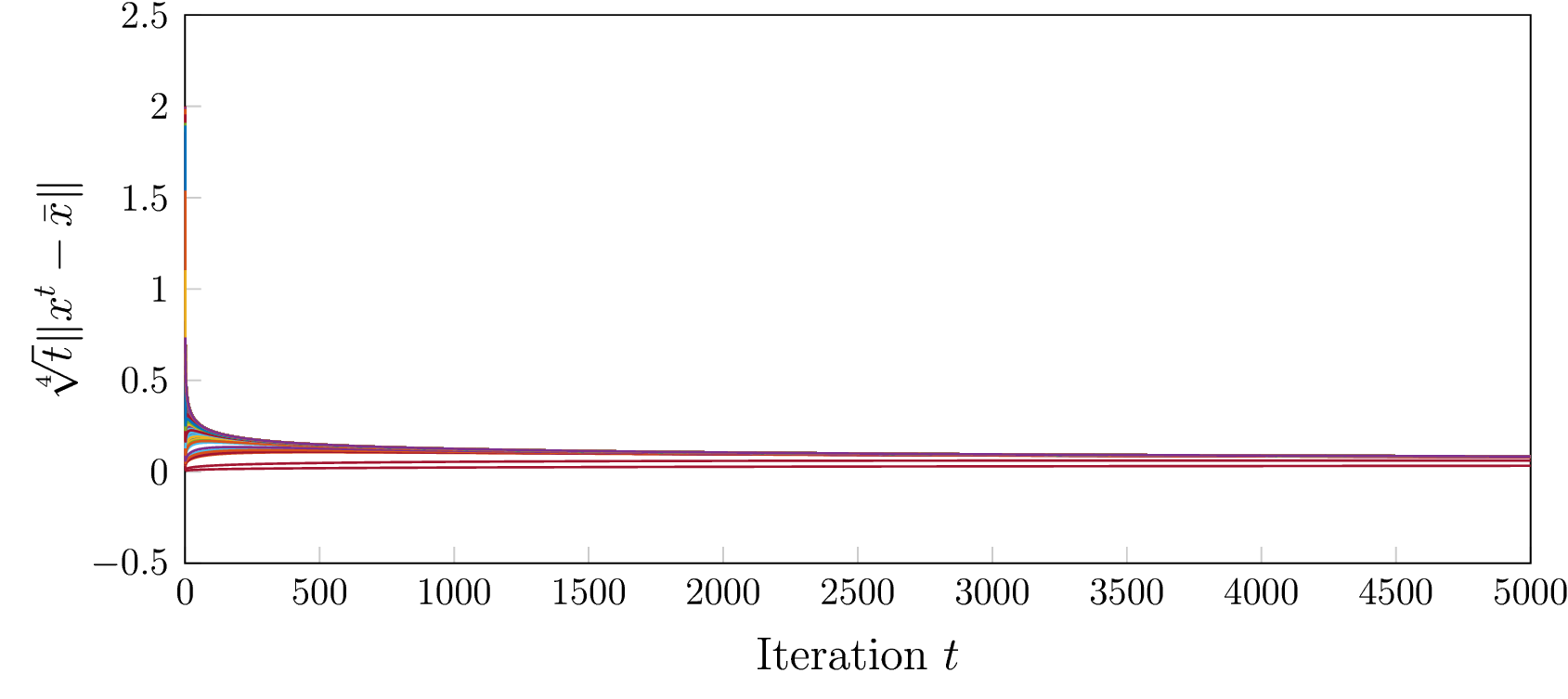}

 \includegraphics[scale=0.65]{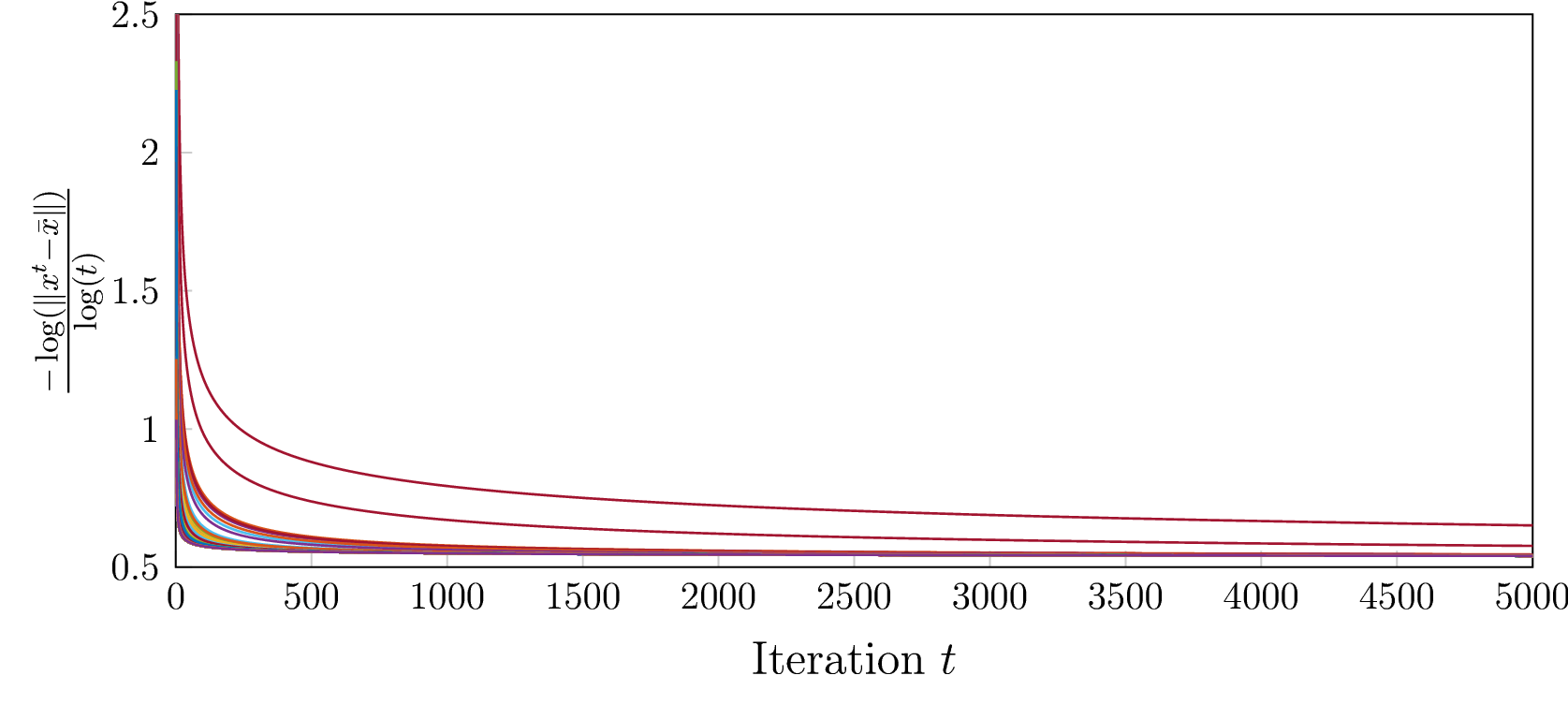}
\end{center}
\caption{Numerical simulation results: (top) the successive change $\sqrt[4]{t} \, \|x^t-\bar x\|$ and (bottom) the ratio $\frac{-{\log(\|x^t-\bar x\|)}}{\log(t)}$ as a function of the number of iterations, $t$.\label{fig}}
\end{figure}

From the first graph, we see that the value of $\sqrt[4]{t} \, \|x^t-\bar x\|$ quickly  decreases with increasing $t$. This supports the result that
$x^t$ converges at least in the order of $O(1/\sqrt[4]{t})$. From the second graph, the value of $\frac{-{\log(\|x^t-\bar x\|)}}{\log(t)}$ appears to approach $1/2$. This suggests that the actual sublinear convergence rate for this example is $O(1/\sqrt{t})$, regardless of the choice of the initial point.
\end{example}

 Furthermore, the following example shows that, whenever the initial point is chosen in the region specified below, the sequence in Example~\ref{ex:ball line} converges with an exact order $O(1/\sqrt{t})$ and thus supports the conjectured rate of convergence.

\begin{example}[The sequence in Example~\ref{ex:ball line} with specific initial points]
 Consider the setting of Example~\ref{ex:ball line}, and suppose that the initial point $x^0=(u_0,v_0)\in\mathbb{R}_{--}\times(0,1)$. If $x^t=(u_t,v_t)\in\mathbb{R}_{--}\times(0,1)$, then using \eqref{eq:example3} we deduce that
  $$x^{t+1}= T_{C,D}(x^t)=\frac{(1-u_t,v_t)}{\sqrt{(1-u_t)^2+v_t^2}}-(1,0)\in\mathbb{R}_{--}\times(0,1).$$
Inductively, the Douglas--Rachford sequence $\{x^t\}$ is contained in $\mathbb{R}_{--}\times\mathbb{R}_{++}$. By Example~\ref{ex:ball line},  the sequence $x^t=(u_t,v_t) \rightarrow (0,0)$.  Below we verify that the sequence with an exact sublinear convergence order $O(1/\sqrt{t})$.

 To see this, we note from $u_t<0$ that
  $$v_{t+1}=\frac{v_t}{\sqrt{(1-u_t)^2+v_t^2}}<\frac{v_t}{\sqrt{1+v_t^2}}.$$
 Setting $w_t = :v_t^2$, we deduce
  $$w_{t+1}<\frac{w_t}{1+w_t}=w_t-w_t^2+O(w_t^3).$$
 Since $w_t\to 0$, for sufficiently large $t$, we have
 $$w_{t+1}<w_t-\frac{1}{2}w_t^2\implies \frac{1}{w_{t+1}}-\frac{1}{w_t}>\frac{1}{2-w_t} \implies \liminf_{t\to\infty}\left(\frac{1}{w_{t+1}}-\frac{1}{w_t}\right)\geq \frac{1}{2}.$$
 It now follows that
  \begin{align*}
   \left(\liminf_{t\to\infty}\frac{1/\sqrt{t}}{v_t}\right)^2=\liminf_{t \to\infty}\frac{1}{t}\frac{1}{w_t} &= \liminf_{t\to\infty}\frac{1}{t}\left(\frac{1}{w_{t}}-\frac{1}{w_0}\right) \\
  &=\liminf_{t\to\infty}\frac{1}{t}\sum_{n=0}^{t-1}\left(\frac{1}{w_{n+1}}-\frac{1}{w_n}\right) \geq \frac{1}{2}.
  \end{align*}
 Taking square roots and inverting both sides we obtain
  \begin{equation}\label{ut}
  \limsup_{t\to\infty}\frac{v_t}{1/\sqrt{t}}\leq \sqrt{2}.\end{equation}
  Now, recall that
  \begin{eqnarray*}
  u_{t+1}= \frac{1-u_t}{\sqrt{(1-u_t)^2+v_t^2}}-1&=& \frac{(1-u_t)-\sqrt{(1-u_t)^2+v_t^2}}{\sqrt{(1-u_t)^2+v_t^2}} \\
  &=& \frac{-v_t^2}{\sqrt{(1-u_t)^2+v_t^2} \big( (1-u_t)+\sqrt{(1-u_t)^2+v_t^2}\big)}.
  \end{eqnarray*}
  Since $\sqrt{(1-u_t)^2+v_t^2} \big( (1-u_t)+\sqrt{(1-u_t)^2+v_t^2}\big) \rightarrow 2$ as $t \rightarrow \infty$, whenever $t$ is sufficiently large we have
  \begin{equation}\label{vt}
  0> u_{t+1} \ge -v_t^2.
  \end{equation}
  Combining \eqref{ut} and \eqref{vt}, we see that there exists $C>0$ such that  $\|(u_t,v_t)\| \le C \frac{1}{\sqrt{t}}$ for all $t\in\NN$. In particular, this also shows that $u_t \rightarrow 0$.

  Noting that $\frac{v_{t+1}}{v_t} =\frac{1}{\sqrt{(1-u_t)^2+v_t^2}}\to 1$ as $t\to\infty$ and $v_t>0$, we therefore deduce that $v_{t-1}<2 v_t$ for all sufficiently large $t$. Combined with \eqref{vt}, this yields
  \[
  v_{t+1}=\frac{v_t}{\sqrt{(1- u_t)^2+v_t^2}} \ge  \frac{v_t}{\sqrt{(1+v_{t-1}^2)^2+v_t^2}} =\frac{v_t}{\sqrt{1+9v_t^2+16^2 v_t^4}} > \frac{v_t}{1+\frac{9}{2}v_t^2}.
  \]
As before, we set $w_t := v_t^2$. Since $w_t\to 0$ and  $(1+\frac{9}{2}w_t)^2(1-10w_t)=1-w_t-\frac{279}{4}w_t^2-\frac{405}{2}w_t^3<1$,   we deduce
  $$w_{t+1}> \frac{w_t}{(1+\frac{9}{2}w_t)^2} > w_t(1-10w_t) \implies w_{t+1}>w_t- 10w_t^2.$$
 Proceeding as before, we obtain
  \[
  \liminf_{t\to\infty}\frac{v_t}{1/\sqrt{t}}\geq \frac{1}{10}.
  \]
 This shows that $\|(u_t,v_t)\| \ge \frac{1}{10}\,\frac{1}{\sqrt{t}}$. Altogether, we have proven that $(u_t,v_t) \rightarrow (0,0)$ with an exact sublinear convergence order $O(1/\sqrt{t})$.
\end{example}

\section{Conclusions}\label{sec:conclusion}
 In this paper, using a H\"older regularity assumption, sublinear and linear convergence of fixed point iterations described by averaged nonexpansive operators has been established. The framework was then specialized to various fixed point algorithms including Krasnoselskii--Mann iterations, the cyclic projection algorithm, and the Douglas--Rachford feasibility algorithm along with some variants. In the case where the underlying sets are convex semi-algebraic, in a finite dimensional space, the results apply without any further regularity assumptions.

  In particular, for our damped Douglas--Rachford algorithm, an explicit estimate for the sublinear convergence rate  has been provided in terms of the dimension and the maximum degree of the polynomials which define the convex sets. We emphasize that, unlike the for damped Douglas--Rachford algorithm, we were not able to provide an explicit estimate of the sublinear convergence rate for the classical Douglas--Rachford algorithm when the two convex sets are described by convex polynomials. Our approach relies on the {\L}ojasiewicz's inequality which gives no quantitative information regarding the H\"older exponent. Providing explicit estimates is left as an open question for future research.

 Another area for future research involves characterization of the convergence rate in the absence of H\"older regularity properties. For instance,
 it is known that the alternating projection method can exhibit arbitrarily slow convergence when applied to two subspaces in infinite dimensional spaces
 without closed sum \cite{HDH}. As shown in \cite[Cor.~3.1]{BorTam},  if only two sets are involved and the initial point is chosen in a specific way,
 the cyclic Douglas--Rachford method can coincide with the alternating projection method, and so, it may exhibit
 arbitrarily slow convergence. On the other hand, it was shown in Proposition~\ref{prop:T_DR hoelder} that the basic/cyclic Douglas--Rachford method enjoys a
 sublinear convergence rate if the underlying sets are convex semi-algebraic sets in finite dimensional spaces. It would be interesting to see
 whether an arbitrarily slow convergence can happen for these two methods for general closed and convex sets in finite dimensional spaces.

 Finally, the current definition of basic semi-algebraic convex sets only applies to finite dimensional spaces. It would interesting to see if a suitable extension of the notion can be profitably used in infinite dimensional spaces using, for instance, polynomials as defined in \cite{GJL}.

\paragraph{Acknowledgments} JMB is supported, in part, by the Australian Research Council. GL is supported, in part, by the Australian Research Council. MKT is supported by Deutsche Forschungsgemeinschaft Research Training Grant 2088. The work was partially performed during his candidature at the University of Newcastle where he was supported by an Australian Postgraduate Award. The authors wish to thank Neal Hermer, Victor Isaac Kolobov, Simeon Reich, Rafa\l\ Zalas and the three anonymous referees for their insightful comments.

\footnotesize

\end{document}